\def\l@subsection{\@tocline{2}{0pt}{2.5pc}{5pc}{}}
\numberwithin{equation}{section}
\newtheorem*{theorem*}{Theorem}
\newtheorem*{definition*}{Definition}
\newtheorem{theorem}{Theorem}[section]
\newtheorem{lemma}[theorem]{Lemma}
\newtheorem{proposition}[theorem]{Proposition}
\newtheorem{corollary}[theorem]{Corollary}
\newtheorem{remark}[theorem]{Remark}
\theoremstyle{definition}
\newtheorem{definition}[theorem]{Definition}
\newtheorem{def/prop}[theorem]{Definition/Proposition}
\theoremstyle{remark}
\definecolor{darkred}{rgb}{1,0,0} 
\definecolor{darkgreen}{rgb}{0,1,0}
\definecolor{darkblue}{rgb}{0, 0, 1}
\definecolor{darkpurple}{RGB}{170, 51, 106}
\DeclareMathOperator{\Hom}{Hom}
\DeclareMathOperator{\End}{End}
\DeclareMathOperator{\id}{1}
\DeclareMathOperator{\Spec}{Spec}
\DeclareMathOperator{\Pic}{Pic}
\DeclareMathOperator{\Coh}{Coh}
\DeclareMathOperator{\Higgs}{Higgs}
\DeclareMathOperator{\Bun}{Bun}
\DeclareMathOperator{\Maps}{Maps}
\DeclareMathOperator{\anMaps}{anMaps}
\DeclareMathOperator{\ad}{ad}
\DeclareMathOperator{\Loc}{Loc}
\DeclareMathOperator{\coker}{coker}
\DeclareMathOperator{\Sym}{Sym}
\DeclareMathOperator{\ev}{ev}
\DeclareMathOperator{\Rep}{Rep}
\DeclareMathOperator{\QCoh}{QCoh}
\DeclareMathOperator{\Mod}{Mod}
\DeclareMathOperator{\QMod}{QMod}
\DeclareMathOperator{\Hgs}{Hgs}
\DeclareMathOperator{\Schemes}{\mathbf{Sch}}
\DeclareMathOperator{\St}{\mathbf{St}}
\DeclareMathOperator{\dSt}{\mathbf{dSt}}
\DeclareMathOperator{\Vect}{Vect}
\DeclareMathOperator{\Hodge}{Hodge}
\DeclareMathOperator{\Deligne}{Deligne}
\DeclareMathOperator{\Tw}{Tw}
\DeclareMathOperator{\Triv}{Triv}
\DeclareMathOperator{\rank}{rank}
\DeclareMathOperator{\Aut}{Aut}
\DeclareMathOperator{\NAH}{NAH}
\DeclareMathOperator{\Perf}{Perf}
\newcommand{\heart}{\ensuremath\heartsuit}
\DeclareMathOperator{\Dol}{Dol}
\DeclareMathOperator{\dR}{dR}
\DeclareMathOperator{\Hod}{Hod}
\DeclareMathOperator{\Sim}{Sim}
\DeclareMathOperator{\Del}{Del}
\DeclareMathOperator{\Bon}{Bon}
\DeclareMathOperator{\QC}{QC}
\DeclareMathOperator{\DCoh}{DCoh}
\DeclareMathOperator{\Hit}{Hit}
\DeclareMathOperator{\BBB}{BBB}
\DeclareMathOperator{\sst}{sst}
\DeclareMathOperator{\st}{st}
\DeclareMathOperator{\DMod}{D-Mod}
\DeclareMathOperator{\im}{im}
\DeclareMathOperator{\DHgs}{DHgs}
\DeclareMathOperator{\PHgs}{PHgs}
\DeclareMathOperator{\nilp}{nilp}
\DeclareMathOperator{\NHgs}{NHgs}
\DeclareMathOperator{\Ker}{Ker}
\DeclareMathOperator{\Sect}{Sect}
\newcommand{\Bb}{\mathcal{B}}
\newcommand{\Dd}{\mathcal{D}}
\newcommand{\Ee}{\mathcal{E}}
\newcommand{\Ff}{\mathcal{F}}
\newcommand{\Gg}{\mathcal{G}}
\newcommand{\Jj}{\mathcal{J}}
\newcommand{\Mm}{\mathcal{M}}
\newcommand{\Oo}{\mathcal{O}}
\newcommand{\Uu}{\mathcal{U}}
\newcommand{\Vv}{\mathcal{V}}
\newcommand{\Zz}{\mathcal{Z}}
\newcommand{\D}{\mathbf{D}}
\newcommand{\B}{\mathrm{B}}
\newcommand{\f}{\mathrm{f}}
\renewcommand{\d}{\mathrm{d}}
\newcommand{\ol}[1]{\overline{#1}}
\newcommand{\wt}[1]{\widetilde{#1}}
\renewcommand{\AA}{\mathbb{A}}
\newcommand{\BB}{\mathbb{B}}
\newcommand{\CC}{\mathbb{C}}
\newcommand{\DD}{\mathbb{D}}
\newcommand{\GG}{\mathbb{G}}
\newcommand{\PP}{\mathbb{P}}
\newcommand{\TT}{\mathbb{T}}
\newcommand{\gG}{\mathfrak{g}}
\renewcommand{\to}{\longrightarrow}
\newcommand\Quotient[2]{
\mathchoice
{
\text{\raise1ex\hbox{\thinspace $#1$}\Big{/} \lower1ex\hbox{$#2$} \thinspace}%
}
{
#1\,/\,#2
}
{
#1\,/\,#2
}
{
#1\,/\,#2
}
}
\newcommand\GIT[2]{
\mathchoice
{
\text{\raise1ex\hbox{\thinspace $#1$}\Big{/}\!\!\!\!\Big{/} \lower1ex\hbox{$#2$} \thinspace}%
}
{
#1\,/\,#2
}
{
#1\,/\,#2
}
{
#1\,/\,#2
a       }
}
\thanks{
First author partially supported by the Spanish Ministry of Science and Innovation, through the ‘Severo Ochoa Programme for Centres of Excellence in R$\&$D’ (CEX2019-000904-S), and through project PGC2022-1001150218, and by the Portuguese FCT through CEECIND/04153/2017. Second author supported by La Caixa INPhINIT programme with fellowship number 1801P.01033.
}
\begin{document}

\title[The Dirac-Higgs complex and categorification of (BBB)-branes]{\bf The Dirac-Higgs complex and categorification of (BBB)-branes}

\author[E. Franco]{Emilio Franco}
\address{E. Franco,
\newline\indent Depto. Matem\'aticas, Facultad de Ciencias, 
\newline\indent Universidad Aut\'onoma de Madrid
\newline\indent Campus de Cantoblanco 28049, Madrid, Espa\~na.}
\email{emilio.franco@uam.es}

\author[R. Hanson]{Robert Hanson}
\address{R. Hanson, \newline\indent Centro de An\'alise Matem\'atica, Geometria e Sistemas Din\^{a}micos, 
\newline\indent Instituto Superior T\'ecnico, Universidade de Lisboa, 
\newline\indent Av. Rovisco Pais s/n, 1049-001 Lisboa, Portugal}
\email{robert.hanson@tecnico.ulisboa.pt}

\begin{abstract}
Let $\Mm_{\Dol}(X,G)$ denote the hyperk\"ahler moduli space of $G$-Higgs bundles over a smooth projective curve $X$. In the context of four dimensional supersymmetric Yang-Mills theory, Kapustin and Witten introduced the notion of (BBB)-brane: boundary conditions that are compatible with the B-model twist in every complex structure of $\Mm_{\Dol}(X,G)$. The geometry of such branes was initially proposed to be  hyperk\"ahler submanifolds that support a hyperholomorphic bundle. Gaiotto has suggested a more general type of (BBB)-brane defined by perfect analytic complexes on the Deligne-Hitchin twistor space $\Tw(\Mm_{\Dol}(X,G))$. 

Following Gaiotto’s suggestion, this paper proposes a framework for the categorification of (BBB)-branes, both on the moduli spaces and on the corresponding derived moduli stacks. We do so by introducing the \textit{Deligne stack}, a derived analytic stack with corresponding moduli space $\Tw(\Mm_{\Dol}(X,G))$, defined as a gluing between two analytic Hodge stacks along the Riemann-Hilbert correspondence. We then construct a class of (BBB)-branes using integral functors that arise from higher non-abelian Hodge theory, before discussing their relation to the Wilson functors from the Dolbeault geometric Langlands correspondence. 
\end{abstract}

\maketitle

{
  \hypersetup{linkcolor=black}
  \setcounter{tocdepth}{2}
  \tableofcontents
}

\newpage

\section{Introduction}

\subsection{Background}

Hitchin's classical papers from 1987 \cite{hitchin_self, hitchin_integrable} derive a dimensional reduction of the self-duality equations and describe a space of solutions with a rich geometric structure. By introducing an auxillary \textit{Higgs field}, the parameter space $\Higgs_G(X)$ consists of the moduli stack of $G$-Higgs bundles on a smooth projective curve $X$, whilst the space of solutions $\Mm_{\Dol}(X,G)$ is the moduli space of Geiseker-semistable objects. On the moduli space, the remarkable work of Hitchin includes the construction of a hyperk\"ahler metric and an algebraic integrable system, since becoming central objects of study in many areas of mathematics and physics.

Courtesy of the hyperk\"ahler metric, $\Mm_{\Dol}(X,G)$ plays a special role in four dimensional supersymmetric Yang-Mills theories. After a dimensional reduction from four to two, S-duality acts as mirror symmetry between $\Mm_{\Dol}(X,G)$ and $\Mm_{\Dol}(X, \, ^LG)$, exchanging $G$ for the Langlands dual group $^LG$, whilst inverting a coupling constant and exchanging electric and magnetic charges \cite{vafa, GNO, strominger, MO}. This is the starting point for the revolutionary work of Kapustin-Witten \cite{kapustin&witten} that relates S-duality in intricate ways to the geometric Langlands corrspondence of Beilinson-Drinfeld \cite{beilinson&drinfeld}. Initially described as a "best hope"\footnote{A later refinement by Arinkin-Gaitsgory \cite{arinkin&gaitsgory} has recently been proven in a remarkable series of papers \cite{ABC+}.}, Beilinson-Drinfeld proposed an equivalence 
\begin{equation}
\label{eq dR GLC intro}
\DMod(\Bun_G(X)) \xrightarrow{\cong} \DCoh(\Loc_{^LG}(X)),
\end{equation}
between $D$-modules on $\Bun_G(X)$ and $\Oo$-modules on $\Loc_{^LG}(X)$, on the moduli stacks of $G$-bundles and flat connections, respectively. In Kapustin and Witten's theory, $D$-modules arise from Lagrangian branes via the quantisation of the \textit{canonical coisotropic brane} \cite[§11.2]{kapustin&witten}. In this way, they sought to reinterpret \eqref{eq dR GLC intro} as a duality between boundary conditions related by S-duality. 

These boundary conditions undergo hyperk\"ahler enhancements of the usual B-brane and A-branes found in homological mirror symmetry \cite{kontsevich}. Kapustin-Witten introduce \textit{(BBB)-branes:} hyperk\"ahler submanifolds that support a hyperholomorphic bundle, while their mirrors are proposed to be \textit{(BAA)-branes}: holomorphic Lagrangian submanifolds that support a spin bundle. The interchange of (BBB) and (BAA)-branes can be understood via Fourier-Mukai transforms that arise in relation to Donagi-Pantev's \textit{classical limit} of \eqref{eq dR GLC intro}, which proposes a derived equivalence 
\begin{equation}
\label{eq Dol GLC intro}
\DCoh(\Higgs_G(X)) \to \DCoh(\Higgs_{^LG}(X)).
\end{equation}
With Fourier-Mukai transforms that resemble the classical limit, various authors have addressed the interchange of (BBB) and (BAA)-branes on $\Mm_{\Dol}(X,G)$ and $\Mm_{\Dol}(X, \, ^LG)$. Several examples can be found within the regular locus \cite{baraglia&schaposnik1, baraglia&schaposnik2, BCFG, biswas&garcia, HMP, heller&schaposnik, hitchin_char}, whilst the duality is significantly more technical in the discriminant locus \cite{FGOP, franco&peon}. 

A complete understanding of the role of physics in geometric Langlands theory requires a description of the boundary conditions in terms of the derived geometry of the stacks. This is part of the philosophy behind the Betti geometric Langlands of Ben-Zvi--Nadler \cite{ben-zvi&nadler}, in which a category of microsheaves plays the role of derived Lagrangian branes. An alternative perspective in the language of shifted symplectic structures \cite{PTVV} is hinted at by Ginzburg-Rozenblyum \cite{ginzburg&rozenblyum}, who construct a family of derived Lagrangian substacks that generalise (BAA)-branes constructed by Gaiotto on the moduli spaces \cite{gaiotto}. It is then natural to ask for a mirror/Langlands dual notion of derived (BBB)-brane on $\Higgs_G(X)$. 

\subsection{Summary of the paper}

Our starting point is a suggestion of Gaiotto on the categorification of (BBB)-branes \cite[Appendix C]{gaiotto}. This involves both algebraic and analytic topologies, so we shall work over both the smooth projective curve $X$ and the underlying analytic curve $\Sigma=X^{an}$. 

The main idea is to apply the twistor transform of Kaledin-Verbitsky \cite{kaledin&verbitsky} to consider (BBB)-branes on $\Mm_{\Dol}(\Sigma,G)$ as sheaves on the twistor space $\Tw(\Mm_{\Dol}(\Sigma,G))$ that are trivial on horizontal twistor lines. In this way, the study of (BBB)-branes can be embedded within twistor theory. Gaiotto suggests a wider class of (BBB)-brane defined by perfect complexes on $\Tw(\Mm_{\Dol}(\Sigma,G))$. The \textit{perfect} condition arises from the behaviour of certain supersymmetric ground states and their excitations. We therefore consider 
\[
\Perf^{\, =}(\Tw(\Mm_{\Dol}(\Sigma, G))) \subset \Perf(\Tw(\Mm_{\Dol}(\Sigma, G))),
\]
the full subcategory of complexes $\TT \in \Perf(\Tw(\Mm_{\Dol}(\Sigma, G)))$ that satisfy the following property: for every horizontal twistor line $\sigma : \PP^1 \to \Tw(\Mm_{\Dol}(\Sigma, G))$, the pullback $\sigma^{*}\TT \in \Perf(\PP^1)$ is quasi-isomorphic to a complex of free sheaves. This is simply a condition on the induced transition functions over the equator of the Riemann sphere (or rather the "algebraic equator" $\CC^{*} \subset \PP^1$).

Alongside an analytic complex $\TT$ on the twistor space, we also consider (BBB)-brane data to consist of an underlying algebraic complex $\BB \in \Perf(\Mm_{\Dol}(X, G)))$, related to $\TT$ via restriction and analytification. Our proposal for a category of (BBB)-branes is therefore defined by the Cartesian diagram 
\begin{equation}
\label{eq bbb category intro2}
\begin{tikzcd}
\BBB( \Mm_{\Dol}(X, G) )  
\arrow[dr, phantom, "\square"] 
\arrow[r]
\arrow[d]
& \Perf^{\, =}(\Tw(\Mm_{\Dol}(\Sigma, G))) \arrow[d, "L\imath_{\Dol}^{*}"] \\
\Perf(  \Mm_{\Dol}(X , G) ) \arrow[r, "(\cdot)^{an}"'] & \Perf( \Mm_{\Dol}(\Sigma, G)  )
\end{tikzcd}.
\end{equation}
A (BBB)-brane is a then a pair $(\BB, \TT)$ such that $\TT$ encodes the \textit{hyperk\"ahler rotations} of $\BB$. By the \textit{categorification of (BBB)-branes}, we refer to:

\textbf{Proposition A.} (\textit{Proposition \ref{co BBB category}). The (BBB)-branes on $\Mm_{\Dol}(X,G)$ defined by Kapustin-Witten correspond to locally free objects in $\BBB(\Mm_{\Dol}(X,G))$ that are supported in one homological degree.}

The rich geometry of the twistor space arises from the non-abelian Hodge theory of Corlette, Donaldson, Hitchin and Simpson \cite{corlette, donaldson, hitchin_self, simpson_higgs}. In letters between Deligne and Simpson which led to Simpson's paper \cite{simpson_hodge_2}, the construction of $\Tw(\Mm_{\Dol}(\Sigma, G))$ was elegantly expressed in terms of \textit{$\lambda$-connections}: a linear interpolation between flat connections ($\lambda = 1$) and Higgs fields ($\lambda=0$). 

To recall the construction of $\Tw(\Mm_{\Dol}(\Sigma,G))$, let $\Mm_{\Hod}(\Sigma , G)$, $\Mm_{\dR}(\Sigma , G)$ and $\Mm_{B}(\Sigma , G)$ denote the analytic moduli spaces of $\lambda$-connections, flat connections and representations $\pi_1(\Sigma) \to G$, respectively. The Riemann-Hilbert correspondence assigns the holonomy representation to a connection, giving rise to an isomorphism $RH : \Mm_{B}(\Sigma , G) \cong \Mm_{\dR}(\Sigma , G).$ A natural $\GG_m$-action rescales $\lambda$-connections, resulting in an embedding 
\[
\Mm_{B}(\Sigma , G) \times (\AA^1 - 0) \xrightarrow{RH \times \id} \Mm_{\dR}(\Sigma , G) \times (\AA^1 - 0) \hookrightarrow \Mm_{\Hod}(\Sigma , G)
\]
We apply the same constructions over $\ol{\Sigma}$, the analytic curve with complex conjugate holomorphic structure. Note that $\Sigma$ and $\ol{\Sigma}$ share the same underlying topological space, so $\Mm_{B}(\Sigma , G) = \Mm_{B}(\ol{\Sigma} , G)$. This allows us to view the Betti moduli spaces as a subspace of the Hodge moduli spaces over both $\Sigma$ and $\ol{\Sigma}$. The \textit{Deligne-Hitchin twistor space} is then defined as a gluing  
\begin{equation}
\label{eq define glu moduli intro}
\begin{tikzcd}
\Mm_{B}(\Sigma , G) \times (\AA^1 - 0) = \Mm_{B}(\ol{\Sigma} , G) \times (\AA^1 - 0)
\arrow[r, hook]
\arrow[d, hook]
\arrow[dr, phantom, "\square"]
& \Mm_{\Hod}(\ol{\Sigma},G) \arrow[d] \\
\Mm_{\Hod}(\Sigma,G) \arrow[r] & \Tw(\Mm_{\Dol}(\Sigma,G))
\end{tikzcd}.
\end{equation}
With this construction at hand, we can state the central philosophy of this paper: to explore the extension the (co)-fiber products \eqref{eq define glu moduli intro} and \eqref{eq bbb category intro2} to the realm of 
of derived analytic and algebraic geometry. The stacks at play arise from Simpson's \textit{higher non-abelian Hodge theory} \cite{simpson_hodge_2, simpson_dolbeault, simpson_dolbeault_2,simpson_hodge}, which studies a family of mapping stacks that encode higher cohomology theories:  
\begin{center}
\begin{tabular}{c|c|c}
Simpson shapes $X_{\Sim}$ as &
Derived moduli stacks of & 
Moduli spaces of  
\\
classical 1-stacks &
the form $\Maps(X_{\Sim} , BG)$ 
& semistable objects \\
\hline
$X_{\Dol}$ & $\Higgs_G(X)$ & $\Mm_{\Dol}(X,G)$ \\
$X_{\dR}$  & $\Loc_G(X)$ & $\Mm_{\dR}(X,G)$ \\
$X_{\Hod}$ & $\Hodge_G(X)$ & $\Mm_{\Hod}(X,G)$ \\
$X_{B}$    & $\Rep_G(X)$ & $\Mm_{B}(X,G)$ \\

\end{tabular} 
\end{center}
These stacks can be constructed either in algebraic or analytic topologies, where the latter is considered over $\Sigma=X^{an}$. The theory of derived analytificaton developed by Porta-Yu \cite{porta_GAGA, porta&yu} allows us to pass between these two topologies. In the derived Riemann-Hilbert correspondence of Holstein-Porta \cite{holstein&porta, porta}, they construct a remarkable morphism $\nu_{RH} : \Sigma_{\dR} \to \Sigma_{B}$ and study the equivalence 
\[
RH = \nu_{RH}^{*}: \Rep_G(\Sigma) \to \Loc_G(\Sigma),
\]
as a derived extension of the holonomy representation map $RH : \Mm_B(\Sigma, G) \xrightarrow{\cong} \Mm_{\dR}(\Sigma, G)$. This provides us with a natural way of constructing a derived extension of the Deligne and Simpson's construction of the twistor space. 

\textbf{Definition B.} \textit{(Definition \ref{de deligne stack}). Let $\Sigma$ be a complex analytic curve and $G$ be a reductive group. The corresponding \textit{Deligne stack} $\Deligne_G(\Sigma)$ is the complex analytic stack defined by the pushout} 
\begin{equation*}
\begin{tikzcd}[column sep = huge]
\Rep_G(\Sigma) \times (\AA^1 - 0) \arrow[dr, phantom, "\square"] \arrow[d, hook] \arrow[r, hook]  & \Hodge_G(\ol{\Sigma})  \arrow[d, hook] \\
\Hodge_G(\Sigma) \arrow[r, hook]  & \Deligne_G(\Sigma)
\end{tikzcd}.
\end{equation*}

With the Deligne stack at hand, we replicate the category of (BBB)-brane defined in \eqref{eq bbb category intro2} to propose the following notion of derived (BBB)-brane. 

\textbf{Definition C.} \textit{(Definition \ref{de stacky BBB})}.
\textit{A (BBB)-brane on $\Higgs_G(X)$ is an object of the dg-category defined by the fiber product} 
\begin{equation}
\label{eq bbb category intro}
\begin{tikzcd}
\BBB( \Higgs_G(X) )  
\arrow[dr, phantom, "\square"] 
\arrow[r]
\arrow[d]
& \Perf^{\, =}(\Deligne_G(\Sigma)) \arrow[d, "L\imath_{\Dol}^{*}"] \\
\Perf(  \Higgs_G(X))  \arrow[r, "(\cdot)^{an}"'] & \Perf( \Higgs_G(\Sigma)  )
\end{tikzcd}.
\end{equation}
We then turn to the construction of examples. Our main example is inspired by a historically significant example of a (BBB)-brane - the \textit{Dirac-Higgs bundle}. This is a hyperholomorphic vector bundle constructed by Hitchin \cite{hitchin_dirac} by considering a null space bundle for a family of Dirac operators 
\[
\D_{(E,\phi)}^* = \begin{pmatrix} \overline{\partial}_E &  \phi \\ \phi^* & \partial_E \end{pmatrix} .
\]
Inspired by an algebraic interpretation of Hausel \cite{hausel}, we define the \textit{Dirac-Higgs complex} in terms of the universal $G$-Higgs bundle $\Uu \to X \times \Higgs_G(X)$, an auxillary representation $\rho : G \to GL_n$ and pushforward along the projection map $p_2 : X \times \Higgs_G(X) \to \Higgs_G(X)$,
\begin{equation}
\label{eq dirac-higgs intro}
\DD_{\rho} := Rp_{2,*}(\rho(\Uu)) \in \Perf(\Higgs_G(X)).
\end{equation}
By adding a twist, we also consider a family of complexes parameterised by a Higgs bundle on $X$, $(E,\phi) \in \Hgs(X,K_X)$. We consider complexes
\begin{equation}
\label{eq dol branes intro2}
\Phi_{\Bon, \rho}(E,\phi) := Rp_{2,*}(\rho(\Uu) \otimes (E,\phi)) \in \Perf(\Higgs_G(X)).
\end{equation}
which resemble hyperholomorphic bundles on the moduli spaces studied by Bonsdorff, Franco-Jardim and Frejlich-Jardim \cite{bonsdorff_1, bonsdorff_2, franco&jardim, frejlich&jardim}. 

To show that \eqref{eq dirac-higgs intro} and \eqref{eq dol branes intro2} are in accordance with our notion of (BBB)-brane, we require a method of constructing complexes on the Deligne stack that satisfy the described functorial compatibilities. Natural candidates for such constructions are integral functors with universal families as kernels, for all the mapping stacks $\Maps(\Sigma_{\Sim}, BG)$, where $\Sim \in \{ \Dol, \dR, \Hod, B \}$ is a formal variable\footnote{The notation "$\Sim$" for this variable is chosen in homage to Carlos Simpson}. In this sense, our branes are constructed by integrating over higher non-abelian Hodge theory. 

The following diagrams are a sketch of our constructions. We take universal families $\Vv_{\Sim} \to \Sigma_{\Sim} \times \Maps(\Sigma_{\Sim}, BG)$ as integral kernels to define the \textit{Dolbeault, de Rham, Hodge and Betti functors.} In the first diagram, the diagonal arrows are the Betti and Hodge functors, whilst the front and back squares are Cartesian. We fill out the first dashed arrow to define the \textit{Deligne functor} as a gluing of complex conjugate Hodge functors. We fill out the second dashed arrow to construct (BBB)-branes. 
\begin{equation*}
\adjustbox{scale=0.92,center}{%
\begin{tikzcd}[row sep={40,between origins}, column sep={70,between origins}]
& 
\Perf(\Sigma_{\Hod}) \times_B \Perf(\ol{\Sigma}_{\Hod}) 
\ar{rr}\ar{dd} 
\arrow[dl, dashed, "\Psi_{\Del, \rho}"'] 
& & 
\Perf(\ol{\Sigma}_{\Hod})
\vphantom{\times_{S_1}} \ar{dd} \ar[dl, "\Psi_{\ol{\Hod}, \rho}"]
\\
\Perf(\Deligne_G(\Sigma)) \ar[crossing over]{rr} \ar{dd} & & \Perf(\Hodge_G(\ol{\Sigma}))
\\
& \Perf(\Sigma_{\Hod}) \ar{rr} \ar[dl, "\Psi_{\Hod, \rho}"] & &  \Perf(\Sigma_{B} \times (\AA^1 - 0)) \vphantom{\times_{S_1}} \ar[dl, "\widehat{\Psi}_{B,\rho}"] 
\\
\Perf(\Hodge_G(\Sigma)) \ar{rr} && \Perf(\Rep_G(\Sigma) \times (\AA^1 - 0)) \ar[from=uu,crossing over]
\end{tikzcd}}
\end{equation*}
\begin{equation*}
\adjustbox{scale=0.92,center}{%
\begin{tikzcd}
\PHgs(\Sigma, K_{\Sigma}) \times_{\PHgs(X, K_X)} (\Perf(\Sigma_{\Hod}) \times_{B} \Perf(\ol{\Sigma}_{\Hod}))^{\, =} 
\arrow[ddr, bend right = 20, "\Phi_{\Bon, \rho}"'] 
\arrow[dr, dashed] 
\arrow[drr, bend left=20, "\Psi_{\Del, \rho}"] &  & \\
& \BBB( \Higgs_G(X) )  
\arrow[dr, phantom, "\square"] 
\arrow[r]
\arrow[d]
& \Perf^{\, =}(\Deligne_G(\Sigma)) \arrow[d, "L\imath_{\Dol}^{*}"] 
\\
& \Perf(  \Higgs_G(X))  \arrow[r, "(\cdot)^{an}"'] & \Perf( \Higgs_G(\Sigma)  )
\end{tikzcd}}
\end{equation*}

Unwrapping this notation, the initial data for our (BBB)-branes consists of a vector
\[
(E,\phi, \Ff, \ol\Ff, f),
\] 
such that 
\begin{itemize}
    \item $(E,\phi) \in \PHgs(X, K_X)$ is an algebraic complex of locally free Higgs sheaves on $X$.
    
    \item $(\Ff,\ol\Ff, f) \in \Perf(\Sigma_{\Hod}) \times_B \Perf(\ol\Sigma_{\Hod})$ is a product of perfect analytic complexes on Hodge shapes (see \eqref{eq Dom}).

    \item Moreover $(\Ff,\ol\Ff, f) \in \left( \Perf(\Sigma_{\Hod}) \times_B \Perf(\ol\Sigma_{\Hod}) \right)^{\,=}$ applies a certain condition involving horizontal twistor lines (see \eqref{eq hodge =}). 
    
    \item $\Ff$ restricts to the analytification $(E,\phi)^{an}$. 
\end{itemize}

\textbf{Theorem D.} \textit{(Theorem \ref{th BBB}, Corollary \ref{co Dirac-Higgs is BBB}). Given the data just described, the pair
\[
\BB := \Phi_{\Bon,\rho}(E,\phi) \in \Perf(\Higgs_G(X)),
\]
\[
\TT := \Psi_{\Del, \rho}(\Ff, \ol\Ff, f) \in \Perf^{\, =}(\Deligne_G(\Sigma)),
\]
defines an object $(\BB, \TT) \in \BBB(\Higgs_G(X))$. We call such objects \textbf{integral branes.} Moreover, this family includes the Dirac-Higgs complex as the special case $(E,\phi) = (\Oo_X, 0)$ and $(\Ff, \ol\Ff, f) = (\Oo_{\Sigma_{\Hod}}, \Oo_{\ol{\Sigma}_{\Hod}}, \id)$.}

In our final result we study the relation between our integral functor constructions and the Wilson functors from Donagi-Pantev's \textit{classical limit} of geometric Langlands. Given a $\CC$-valued point $x \in X$ and a representation $\mu : G \to GL_n$, they define Wilson functors $W_{\mu, x}$ as a degeneration of Wilson functors that appear in Beilinson-Drinfeld's original work. We place $W_{\mu, x}$ in the following commutative diagram.

\textbf{Theorem E.} (\textit{Theorem \ref{th exchange wilson hecke}}). 
Fix points $x \in X$ and $y = \iota_{\Dol}(x) \in X_{\Dol}$ related by the inclusion $\iota_{\Dol} : X \to X_{\Dol}$. Let $\rho, \mu : G \to GL_n$ be two representations. The Dolbeault integral functors with respect to $\rho$ and $\rho \otimes \mu$ fit into the following commutative square: 
\begin{equation*}
\begin{tikzcd}[column sep = huge]
\DCoh(X_{\Dol}) \arrow[r, "\Phi_{\Dol, \rho}"] \arrow[d, "(\cdot)|_{y}"'] & \DCoh(\Higgs_G(X)) \arrow[d, "W_{\mu, x}"] \\
\DCoh(X_{\Dol}) \arrow[r, "\Phi_{\Dol, \rho \otimes \mu}"'] & \DCoh(\Higgs_G(X))
\end{tikzcd}.
\end{equation*}

\subsection{Structure}

Section \ref{se hodge} is a short review of stacks that appear in non-abelian Hodge theory. These are  derived mapping stacks of the form $\Maps(\, \cdot \,, BG)$, which parameterise Higgs bundles, flat connections, $\lambda$-connections and holonomy representations. Section \ref{se Deligne stack} passes to the corresponding moduli spaces and recalls the construction of the Deligne-Hitchin twistor space. Using the stacks defined in Section \ref{se hodge}, we then generalise the twistor constructions to the setting of derived analytic stacks, which provides the definition of the \textit{Deligne stack}. 

Section \ref{se dirac-higgs} begins by recalling Hitchin's analytic construction of the Dirac-Higgs bundle on $\Mm_{\Dol}(X,G)$ and the subsequent algebraic intepretation of Hausel. We show how Hausel's interpretation defines a complex on $\Higgs_G(X)$ that we name the \textit{Dirac-Higgs complex}. Section \ref{se integral functors} adds twists to the Dirac-Higgs complex, interpreted as the construction and study of integral functors. Namely, we define the Bonsdorff, Dolbeault, de Rham, Hodge, Betti and Deligne functors. 

Section \ref{se BBB on Higgs} studies the suggestion of Gaiotto on the categorification of (BBB)-branes. By using the Deligne stack from Section \ref{se Deligne stack}, we provide a natural generalisation of Gaiotto's suggestion to the setting of derived algebraic and analytic stacks. Section \ref{se nahm branes} constructs examples of (BBB)-branes, with respect to the categorification of the previous section. Our examples arise from the integral functors constructed in Section \ref{se integral functors}. Section \ref{se relation} is dedicated to the relation between our (BBB)-branes and Donagi-Pantev's classical limit of the geometric Langlands correspondence. Specifically, we compute the relation between the Dolbeault integral functor and the Wilson functors in the classical limit.

\subsection{Notation and conventions}
We work over $\CC$ throughout. By \textit{category}, we refer to a $\CC$-linear dg-category, or equivalently to a stable $\CC$-linear $\infty$-category. By  \textit{algebraic stack}, we refer to a derived algebraic stack over the étale site of derived algebraic schemes. Similarly, by \textit{analytic stack}, we refer to a derived analytic stack over the étale site of derived analytic spaces. $\dSt$ and $\St$ denote the categories of derived and classical 1-stacks, respectively. 

We write $\QCoh(\cdot)$ for the dg-enhanced category of quasi-coherent sheaves and $\Vect(\cdot) \subset \Coh(\cdot) \subset \QCoh(\cdot)$ for the subcategories of locally free and coherent objects, respectively. We write $\QC(\cdot)$ for the dg-enhanced derived category of bounded, quasi-coherent sheaves and $\Perf(\cdot) \subset \DCoh(\cdot) \subset \QC(\cdot)$ for the subcategories of perfect and coherent objects, respectively.

We make use of truncations to classical 1-stacks via the truncation functor $t_0 : \dSt \to \St$. Our most general constructions are valid only in $\dSt$, as we require various fiber products to be well-behaved and various applications of base change to be valid. Compatibilities with truncation will be explicitly identified where necessary. 

\subsection{Acknowledgements}

Particular thanks are due to Anna Sisak for recommending to us the twistor theory interpretation of a (BBB)-brane. We thank Eric Chen, Peter Gothen, Pranav Pandit, Ana Peón-Nieto, Francesco Sala and Menelaos Zikidis for many helpful conversations, as well as David Ben-Zvi, Adrien Brochier, Damien Calaque, Justin Hilburn, Jon Pridham and Jason Starr for sharing advice to the second author's questions online at Math Overflow. Thanks also to Johannes Horn, Andr\'e Oliveira and João Ruano for their work on related projects and to the anonymous referee for suggesting numerous improvements to a previous version of this work.  

\section{Higher non-abelian Hodge theory}
\label{se hodge}

This section is a short review of stacks that appear in non-abelian Hodge theory. Over a smooth projective variety $X$, we define the moduli stacks of Higgs bundles, flat connections, $\lambda$-connections and holonomy representations. Their description is provided in terms of mapping stacks $\Maps(Z, BG)$, the parameter stack of morphisms $Z \times S \to BG$ in $\dSt$. The source will be given by one the of the classical 1-stacks $Z = X_{\Dol}$, $X_{\dR}$, $X_{\Hod}$, $X_B$, the \textit{Simpson shapes}. The target is always the classifying space $BG$ of a reductive group $G$.

\subsection{Dolbeault shape and Higgs bundles}
\label{se dolbeault shape}
\label{se dolbeault}
\label{se higgs is mapping stack}
Over a smooth projective variety $X$, a \textit{Higgs bundle} $(E,\phi)$ is a vector bundle $E \to X$ together with a section $\phi \in H^0(\End(E) \otimes \Omega_X)$ such that $\phi \wedge \phi = 0$, where $\phi$ is then called a \textit{Higgs field} \cite{hitchin_self, simpson_higgs}. A \textit{Higgs sheaf} is the same data, but allows $E$ to be a coherent sheaf. Let $\Hgs(X, \Omega_X)$ denote the category of Higgs sheaves. Similarly, for a line bundle $L \to X$, one can consider $L$-Higgs fields of the form $\phi \in H^0(X, \End(E) \otimes L)$, which are objects of $\Hgs(X, L)$. A Higgs sheaf has a \textit{Dolbeault complex} given by
\begin{equation}
\label{eq dol complex}
E \xrightarrow{\phi} E \otimes \Omega_X \xrightarrow{\phi} E \otimes (\Omega_X \wedge \Omega_X) \to \cdots.
\end{equation}
The cohomology defines the Dolbeault cohomology $H^{\bullet}_{\Dol}(E,\phi)$, where the usual Dolbeault cohomology of $X$ is then given by 
\[
H^{i}_{\Dol}(\Oo_X, 0) 
= H^{i}( \Oo_X \xrightarrow{0} \Omega_X \xrightarrow{0} \Omega_X \wedge \Omega_X \to \cdots )
\cong \bigoplus_{p + q = i}H^{q}(X, \Omega_X^{p}).
\]
Introduced and studied by Simpson \cite{simpson_dolbeault, simpson_dolbeault_2}, the \textit{Dolbeault shape} $X_{\Dol}$ is defined as the relative classifying stack for the formal group scheme $\widehat{TX} \to X$, where $\widehat{TX}$ denotes the formal completion of the tangent bundle along the zero section (see \cite[§3.1.3]{gaitsgory&rozenblyum} for such completions). The Dolbeault shape arises with a natural atlas on $\kappa : X \to X_{\Dol}$.
\begin{proposition} 
\label{pr QCoh Dol}
(\cite{simpson_hodge_2}). Let $X$ denote a smooth projective variety. Then pullback along $\kappa:X \to X_{\Dol}$ induces an equivalence of categories 
\[
\kappa^{\heart} : \Coh(X_{\Dol}) \xrightarrow{\cong} \Hgs(X, \Omega_X), 
\]
with an inverse $\kappa_{\heart}$ induced by pushforward. Moreover, this equivalence can be extended to families: 
\begin{equation}
\label{eq relative BNR}
(\kappa \times \id_S)^{\heart} : \Coh(X_{\Dol} \times S) \xrightarrow{\cong} \Hgs(X \times S, q_1^{*}\Omega_X),
\end{equation}
compatible with $G$-(Higgs) bundles for reductive structure group $G$. 
\end{proposition}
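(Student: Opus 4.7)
The plan is to exploit the definition $X_{\Dol} = B\widehat{TX}$ as the relative classifying stack of the formal tangent bundle, and then use faithfully flat descent along the atlas $\kappa: X \to X_{\Dol}$. Under this descent, a quasi-coherent sheaf $\Ff$ on $X_{\Dol}$ is encoded by its pullback $E := \kappa^{*}\Ff$ together with an equivariant structure for the formal group scheme $\widehat{TX} \to X$. So the task reduces to translating such an equivariant structure into the data of a Higgs field $\phi : E \to E \otimes \Omega_X$ with $\phi \wedge \phi = 0$.

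For the second step, I would recall that for a formal abelian group scheme of the form $\widehat{V}$ (formal completion of a vector bundle $V \to X$ along the zero section), an equivariant structure on an $\Oo_X$-module $E$ is the same datum as an $\Oo_X$-linear coaction $E \to E \otimes_{\Oo_X} \Oo(\widehat{V})$, where $\Oo(\widehat{V}) \cong \widehat{\Sym}(V^*)$. Passing to primitive elements, this is equivalent to a map $V \otimes E \to E$ satisfying the cocycle condition for a $\Sym(V)$-module structure; because $\Sym(V)$ is commutative, this module structure is encoded by its value on generators, i.e. by an $\Oo_X$-linear map $E \to E \otimes V^*$ whose iterate $E \to E \otimes V^* \otimes V^*$ is symmetric. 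Specialising to $V = TX$ gives $E \to E \otimes \Omega_X$, and the symmetry of the iterate is precisely the integrability condition $\phi \wedge \phi = 0$. This verifies essential surjectivity; full faithfulness follows because descent already identifies morphisms in $\Coh(X_{\Dol})$ with equivariant morphisms downstairs, and an equivariant morphism between $\widehat{TX}$-modules is exactly a morphism of Higgs sheaves.

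The main obstacle is making the correspondence between the formal-group-equivariant structure and the $\Sym(TX)$-module structure fully rigorous; one must take care that the completion of $\Sym(\Omega_X)$ encoding the coaction is compatible with the coherent (non-completed) nature of the Higgs data, so that the Higgs field lies in $\End(E) \otimes \Omega_X$ rather than in a completed version. The standard way to resolve this is to observe that a coaction by the (completed) algebra of functions on $\widehat{TX}$ on a coherent $E$ is automatically determined by its first infinitesimal order, by coherence and the nilpotent nature of the augmentation ideal on compacts; this is the place where Simpson's argument in \cite{simpson_hodge_2} is invoked.

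Finally, the extension to families in \eqref{eq relative BNR} follows by the same argument relative to $S$: base-changing the atlas yields $\kappa \times \id_S : X \times S \to X_{\Dol} \times S$, the formal group $\widehat{TX} \times S$ is again a formal abelian group over $X \times S$, and the same descent and primitive-element calculation produces a relative Higgs field valued in $q_1^{*}\Omega_X$. The compatibility with principal $G$-bundles for reductive $G$ is then a formal consequence: a $G$-bundle on $X_{\Dol}$ is a $BG$-valued map, which by the Tannakian characterisation of $BG$ is determined by the tensor-functorial assignment $\Rep(G) \to \Coh(X_{\Dol})$, and running this through the above equivalence yields a tensor-functorial assignment into $\Hgs(X, \Omega_X)$, i.e. a $G$-Higgs bundle.
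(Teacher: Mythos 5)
Your proposal follows essentially the same route as the paper's proof: both identify $\Coh(X_{\Dol})$ with comodules over the completed algebra of functions on $\widehat{TX}$ (your descent along the atlas $\kappa$ is precisely how the paper's cited comodule description of sheaves on a classifying stack is established), dualise to $\Sym^{\bullet}(T_X)$-modules, and read off the Higgs field from the degree-one generators — the paper outsources this last translation to Simpson's Lemma 6.5, whereas you derive the integrability $\phi \wedge \phi = 0$ directly from symmetry of the iterated coaction, which is a correct unpacking of the same fact. The completion subtlety you flag is exactly the point the paper resolves by identifying $\Sym^{\bullet}(T_X)$ with the dual of $\widehat{\Sym^{\bullet}(T^{*}X)}$ via the duality between limits and colimits, so your coherence argument is consistent with theirs; the family and $G$-bundle statements are handled by citation in the paper but your relative-descent and Tannakian arguments are the standard way to prove them.
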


\begin{proof} 
It is convenient to equate Higgs fields $\phi : E \to E \otimes \Omega_X$ with actions of the symmetric algebra $\Lambda_{\Dol} = \Sym^{\bullet}(T_X)$. This is achieved by taking the dual morphism $\phi^{\vee} : E \otimes T_X \to E$ and noting that $\Lambda_{\Dol}$ is generated by $\Sym^{0}(T_X) = \Oo_X$ and $\Sym^{1}(T_X) = T_X$. As pointed out by Simpson \cite[Lemma 6.5]{simpson2}, this observation defines an equivalence 
\begin{equation*}
\Mod_{\Lambda_{\Dol}}(X) \xrightarrow{\cong} \Hgs(X, \Omega_X).
\end{equation*}
With this in place, we shall describe the first statement as a composition 
\[
\Coh(X_{\Dol}) \cong \Mod_{\Lambda_{\Dol}}(X) \cong \Hgs(X, \Omega_X). 
\]
The equivalence is implicit in work of Simpson \cite{simpson_hodge_2} and one can also refer to work of Porta-Sala \cite[Proposition 5.1.2]{porta&sala}. The idea of the proof follows from three steps. First, one makes use of the equivalence between quasi-coherent sheaves on a classifying space and comodules for its ring of regular functions \cite[Tag 06WT]{the_stacks_project}, which amounts to $\Sym^\bullet(\widehat{T^*X})$ in this case. Secondly, $\Sym^\bullet(\widehat{T^*X}) \cong \widehat{\Sym^\bullet(T^*X)}$ as the symmetric algebra of a colimit is the colimit of the symmetric algebras \cite[Tag 00DM]{the_stacks_project}, and finally the identification of $\Lambda_{\Dol} = \Sym^\bullet(TX)$  with the dual of $\widehat{\Sym^\bullet(T^*X)}$ derived from the duality between limits and colimits. The moreover statement is established by Simpson \cite[Proposition 3.1]{simpson_dolbeault_2}. 
\end{proof}
Let $X$ be a smooth projective variety and $G$ be a reductive group. We define the \textit{Higgs stack} to be  
\[
\Higgs_G(X) = \Maps(X_{\Dol},BG),  
\]
the moduli stack of $G$-bundles on $X_{\Dol}$, or equivalently, of $G$-Higgs bundles on $X$. The equivalence between these two moduli problems has the following universal incarnation. 
\begin{corollary}
\label{co universal higgs and bun}
Let $\Uu$ denote the universal $G$-Higgs bundle on $X\times \Higgs_G(X)$ and let $\Vv_{\Dol}$ denote the universal $G$-bundle on $X_{\Dol} \times \Maps(X_{\Dol}, BG).$ Let $\rho :G \to GL_n$ be a representation. There exists isomorphisms
\[
(\kappa \times \id)^{\heart} \Vv_{\Dol} \cong \Uu, \quad (\kappa \times \id)^{\heart} \rho(\Vv_{\Dol}) \cong \rho(\Uu).
\]
\end{corollary}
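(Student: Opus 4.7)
The plan is to derive both isomorphisms directly from the definitions, using the functoriality built into Proposition \ref{pr QCoh Dol}. By construction $\Higgs_G(X) = \Maps(X_{\Dol}, BG)$, so the base stacks coincide tautologically and the morphism $(\kappa \times \id) : X \times \Higgs_G(X) \to X_{\Dol} \times \Higgs_G(X)$ is well defined. The moreover clause of Proposition \ref{pr QCoh Dol}, applied with $S = \Higgs_G(X)$, then produces $(\kappa \times \id)^{\heart}\Vv_{\Dol}$ as a $G$-Higgs bundle on $X \times \Higgs_G(X)$, and it remains only to identify it with $\Uu$.

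First I would verify that this Higgs bundle satisfies the universal property of $\Uu$. A morphism $f : T \to \Higgs_G(X)$ is equivalently a $G$-bundle on $X_{\Dol} \times T$ of the form $(\id_{X_{\Dol}} \times f)^{*}\Vv_{\Dol}$. Naturality of the equivalence in Proposition \ref{pr QCoh Dol} under pullback along $f$ yields the base change isomorphism
\[
(\id_X \times f)^{*}(\kappa \times \id)^{\heart}\Vv_{\Dol} \;\cong\; (\kappa \times \id_T)^{\heart}(\id_{X_{\Dol}} \times f)^{*}\Vv_{\Dol},
\]
whose right hand side is the $G$-Higgs bundle on $X \times T$ classified by $f$. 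By the universal property defining $\Uu$, we conclude $(\kappa \times \id)^{\heart}\Vv_{\Dol} \cong \Uu$.

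For the second isomorphism, both $\rho(\Vv_{\Dol})$ and $\rho(\Uu)$ are obtained by postcomposition with $B\rho : BG \to BGL_n$. The compatibility of Proposition \ref{pr QCoh Dol} with reductive structure groups ensures that $(\kappa \times \id)^{\heart}$ intertwines these associated bundle constructions, giving
\[
(\kappa \times \id)^{\heart}\rho(\Vv_{\Dol}) \;\cong\; \rho\bigl((\kappa \times \id)^{\heart}\Vv_{\Dol}\bigr) \;\cong\; \rho(\Uu),
\]
where the last step uses the first isomorphism. The main subtlety is really upstream of this corollary, in establishing the naturality of $\kappa^{\heart}$ under arbitrary base change and under change of structure group; once these two functorialities are granted by Proposition \ref{pr QCoh Dol}, the present statement follows formally by comparison of universal properties.
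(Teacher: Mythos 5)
Your proof is correct and matches the paper's intent: the paper states this corollary without a separate proof, treating it as an immediate consequence of the ``moreover'' clause of Proposition \ref{pr QCoh Dol} (the family version of the equivalence and its compatibility with $G$-structure), which is exactly the mechanism you invoke. Your explicit comparison of universal properties via base-change naturality is a faithful unpacking of that implicit argument rather than a different route.
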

Grothendieck’s intended philosophy for stacks \cite{grothendieck_stacks} is the pursuit of higher cohomology theories. The case of $\Higgs_G(X)$ can be viewed as a higher form of Dolbeault cohomology, as the following result of Simpson illustrates.   
\begin{proposition}
\label{pr dol cohomology}
\cite[Proposition 3.2]{simpson_dolbeault_2}. Let $X$ be a smooth projective variety, $\Ee$ be a sheaf on $X_{\Dol}$, and let $(E, \phi) = \kappa^{\heart} \Ee$ be the corresponding Higgs sheaf on $X$ (equivalently; $\kappa_{\heart}(E, \phi) = \Ee$). Then, there exists an isomorphism between the sheaf cohomology of $\Ee$ and the Dolbeault cohomology of $(E,\phi)$;
\begin{equation}
\label{eq dol cohom}
H^{\bullet}(X_{\Dol}, \Ee) \cong H^{\bullet}_{\Dol}(E,\phi).
\end{equation}
In particular, the Dolbeault cohomology of $X$ is given by $R\Gamma( \Oo_{X_{\Dol}} ) = H^{\bullet}(X_{\Dol}, \Oo_{X_{\Dol}}) \cong H^{\bullet}_{\Dol}( \Oo_X, 0 )$. Moreover, \eqref{eq dol cohom} can be extended to families. 
\end{proposition}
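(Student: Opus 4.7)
The plan is to compute $R\Gamma(X_{\Dol}, \Ee)$ by leveraging the description of $X_{\Dol}$ as the relative classifying stack $B\widehat{TX}$ over $X$. This presentation translates sheaf cohomology on $X_{\Dol}$ into Lie-algebra cohomology for the abelian formal group $\widehat{TX}$ with coefficients in the associated $\Lambda_{\Dol}$-module, and this cohomology will turn out to be computed by a Koszul/Chevalley-Eilenberg complex that one identifies with the Dolbeault complex \eqref{eq dol complex}.

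First, I would use Proposition \ref{pr QCoh Dol} to pass from $\Ee \in \Coh(X_{\Dol})$ to its associated $\Lambda_{\Dol}$-module, recalling that $\Lambda_{\Dol}=\Sym^{\bullet}(T_X)$ and that the $T_X$-action corresponds to the dualised Higgs field $\phi^{\vee}: E\otimes T_X \to E$. Since $\kappa: X\to X_{\Dol}$ presents $X_{\Dol}$ as a $\widehat{TX}$-torsor, the derived global sections should be identified with
\[
R\Gamma(X_{\Dol},\Ee) \simeq R\Hom_{\Lambda_{\Dol}}\bigl(\Oo_X,(E,\phi)\bigr),
\]
where $\Oo_X$ is the trivial $\Lambda_{\Dol}$-module. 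Then I would resolve $\Oo_X$ by the standard Koszul complex of free $\Lambda_{\Dol}$-modules
\[
\cdots \to \Lambda_{\Dol}\otimes_{\Oo_X}\wedge^p T_X \to \cdots \to \Lambda_{\Dol}\otimes_{\Oo_X} T_X \to \Lambda_{\Dol}\to \Oo_X \to 0.
\]
Applying $\Hom_{\Lambda_{\Dol}}(-,(E,\phi))$ and using the duality $T_X^{\vee}=\Omega_X$, the resulting complex is precisely
\[
E \xrightarrow{\phi} E\otimes \Omega_X \xrightarrow{\phi} E\otimes \wedge^2\Omega_X \to \cdots,
\]
namely the Dolbeault complex of $(E,\phi)$, whose cohomology is $H^{\bullet}_{\Dol}(E,\phi)$ by definition. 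The special case $\Ee=\Oo_{X_{\Dol}}$ recovers $R\Gamma(\Oo_{X_{\Dol}})\cong \bigoplus_{p+q=\bullet}H^q(X,\Omega^p_X)$. For the families version, one replaces $X$ by $X\times S$, applies the relative equivalence \eqref{eq relative BNR}, and invokes flat base change along the projection $X\times S \to S$ to propagate the Koszul argument over $S$.

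The main obstacle is making the Koszul/bar calculation rigorous in the (derived) stack formulation: one must verify that the atlas $\kappa$ is faithfully flat in the appropriate sense and that pushforward along it is computed by the Chevalley-Eilenberg complex of the formal group $\widehat{TX}$. This reduces to $\widehat{TX}$ being an affine abelian formal group scheme over $X$ whose function ring $\widehat{\Sym}(T^*X)$ is Koszul-dual to $\Lambda_{\Dol}=\Sym^{\bullet}(T_X)$, which ensures that the comodule and module pictures yield compatible derived functors. Once this compatibility is in place, the identification of the Koszul output with the Dolbeault complex is purely formal, and the families extension follows from the same calculation performed relatively.
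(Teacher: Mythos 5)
The paper does not actually prove this statement: it is quoted directly from Simpson \cite[Proposition 3.2]{simpson_dolbeault_2}, so there is no internal argument to compare against. Your reconstruction is correct and is essentially the proof one finds in the cited source: under the equivalence of Proposition \ref{pr QCoh Dol} the structure sheaf $\Oo_{X_{\Dol}}$ goes to the trivial $\Lambda_{\Dol}$-module $(\Oo_X,0)$, so $R\Gamma(X_{\Dol},\Ee)\simeq R\Hom_{\Lambda_{\Dol}}(\Oo_X,(E,\phi))$, and since $T_X$ is locally free the Koszul resolution of $\Oo_X$ over $\Sym^{\bullet}(T_X)$ is a resolution by locally free $\Lambda_{\Dol}$-modules, whence the derived Hom is the hypercohomology of $E\xrightarrow{\phi}E\otimes\Omega_X\xrightarrow{\phi}\cdots$, i.e.\ $H^{\bullet}_{\Dol}(E,\phi)$. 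You correctly isolate the one step that carries real content, namely that sheaf cohomology on the classifying stack $B_X\widehat{TX}$ is computed by the Chevalley--Eilenberg/cobar complex of the formal group and that the comodule picture over $\widehat{\Sym}(T^*X)$ is Koszul-dual to the $\Sym^{\bullet}(T_X)$-module picture; this is exactly the compatibility that Proposition \ref{pr QCoh Dol} (and its derived enhancement, cf.\ Porta--Sala) supplies, and it is the only point at which your sketch relies on an input beyond formal homological algebra. The families statement via \eqref{eq relative BNR} and base change is likewise the intended argument.
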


\subsection{de Rham shape and flat connections}
\label{se flat connections}
Let $X$ be a smooth projective variety with a locally free sheaf $E$. A connection is morphism of sheaves of sets $\nabla: E \to E \otimes \Omega_X^1$ that satisfies the Leibniz rule. A connection is flat if $\nabla^2 = \nabla \circ \nabla = 0$, so a flat connection defines a de Rham complex
\[
\Oo_X \xrightarrow{\nabla} E\xrightarrow{\nabla} E \otimes \Omega_X^1 \xrightarrow{\nabla} E \otimes (\Omega_X^1 \wedge \Omega_X^1) \to \cdots 
\]  
The cohomology $H_{\dR}^{\bullet}(E,\nabla)$ of this complex is a sheaf-valued de Rham cohomology introduced by Grothendieck in a letter to Atiyah \cite{grothendieck_atiyah_letter}. The classical construction of de Rham cohomology on $X$ coincides with $H_{\dR}^{\bullet}(\Oo_X, \d)$, where $\d$ is the de Rham differential. In this way, flat connections gives rise to a sheaf-valued de Rham cohomology. 

Given a flat connection $\nabla: E \to E \otimes \Omega_X^1$, the dual morphism $T_X \otimes E \to E$ is a module over the algebra of differential operators $\Dd_X$. This leads us to the notion of a \textit{$\Dd_X$-module}; a module over $\Dd_X$, where also $\Ee$ is quasi-coherent as an $\Oo_X$-module. These are the objects of the category $\QMod_{\Dd_X}(X)$. 

Introduced and studied by Simpson \cite{simpson_dolbeault, simpson_hodge}, the \textit{de Rham shape} $X_{\dR}$ is defined to be the formal completion of the diagonal embedding $\delta : X \hookrightarrow X \times X$. As a category fibered in groupoids, it has families over $S \in \Schemes$ given by $X_{\dR}(S) = \Hom(S_{red}, X)$. The relation between $X_{\dR}$ and flat connections is given by the equivalences of categories 
\begin{equation}
\label{eq crystals and d-modules}
\QCoh(X_{\dR}) \cong \QMod_{\Dd_X}(X),
\end{equation}
as shown by Simpson \cite{simpson_hodge_2} and Porta-Sala \cite[Proposition 4.1.1.]{porta&sala}. The equivalences are compatible with both the subcategories of coherent sheaves and with forgetful functors to $\QCoh(X)$. The relation between $X_{\dR}$ and de Rham cohomology is given by an isomorphism 
\[
R\Gamma( X_{\dR} , \Oo_{X_{\dR}}) \cong H^{\bullet}_{\dR}(X) = H^{\bullet}_{\dR}(\Oo_X, \d).
\]
(e.g. \cite[§6.5.1]{gaitsgory&rozenblyum2}, \cite[Corollary 4.1.2]{porta&sala}). It follows that sheaves on $X_{\dR}$ encode a sheaf-valued de Rham cohomology on $X$, and this is the point of view taken by Simpson in the study of mapping stacks 
\[
\Loc_G(X) = \Maps(X_{\dR}, BG),
\]
the moduli stack that parameterises families of flat connections on $X \times S$, which are sheaf morphisms $\nabla : \Ee \to \Ee \otimes \Omega^1_{X \times S / S}$. 

\subsection{Hodge shape and $\lambda$-connections}
\label{se lambda connections}
Following suggestions of Deligne, the study of $\lambda$-\textit{connections} for formal parameter $\lambda \in \AA^1$ was initiated by Simpson \cite{simpson1, simpson2, simpson_hodge_2} as a way to interpolate between flat connections at $\lambda=1$ and Higgs fields at $\lambda=0$. The notion of a $\lambda$-connection is defined to be a sheaf morphism $\nabla^{\lambda} : \Ee \to \Ee \otimes \Omega^1_X$ that satisfies a $\lambda$-rescaled version of the Leibniz rule: 
\begin{equation}
\label{eq rescalaing leibniz}
\nabla^{\lambda}(f\sigma) = \lambda \cdot f \otimes d\sigma + f \nabla(\sigma).
\end{equation}
At $\lambda = 1$ this is the standard Leibniz rule and at $\lambda = 0$ this is $\Oo$-linearity. There exists a natural rescaling $\GG_m$-action $\nabla^{\lambda} \mapsto \mu \cdot \nabla^{\lambda}$, through which the Leibniz rule transforms as 
\begin{equation}
\label{eq rescale lambda}
\mu \cdot \nabla^{\lambda}(f\sigma) = \mu \lambda \cdot f \otimes d\sigma + f \mu \cdot \nabla^{\lambda}(\sigma),
\end{equation}
so $\mu \cdot \nabla^{\lambda}$ is a $(\mu \lambda)$-connection. Consequently for $\lambda \neq 0$, a $\lambda$-connection can be rescaled to a $1 = (\lambda^{-1} \lambda)$-connection. Higgs fields can then be viewed as a degeneration of a $\GG_m$-family in the limit $\lambda \to 0$. 

With this point of view, Simpson \cite[§7]{simpson_hodge} introduced and studied the \textit{Hodge shape} $X_{\Hod}$, defined as the deformation to the normal bundle of the map $X \to X_{\dR}$ (see \cite[§9.2]{gaitsgory&rozenblyum} for such deformations). The construction comes equipped with a natural projection to the deformation parameter: $\Lambda : X_{\Hod} \to \AA^1$, with preimages
\begin{equation}
\label{eq hodge shape projection}
\begin{split}
& X_{\Hod} \times_{\AA^1} \{0\} \cong X_{\Dol}, \\ 
& X_{\Hod} \times_{\AA^1} \{\lambda\} \cong X_{\dR}, \quad \text{ for all } \lambda \neq 0, \\
\Triv_{\Sigma} : \, \, & X_{\Hod} \times_{\AA^1} (\AA^1 - 0) \xrightarrow{\cong} X_{\dR} \times (\AA^1 - 0 ).
\end{split}
\end{equation}
Let $X$ be a smooth projective variety and $G$ be a reductive group. Relative to $X_{\Hod} \to \AA^1$, we define the \textit{Hodge stack} to be 
\[
\Hodge_G(X) = \Maps(X_{\Hod} / \AA^1, BG),
\]
the moduli stack that parameterises families of $\lambda$-connections over $S \to \AA^1$ \cite[§7]{simpson_hodge}, \cite[§6]{porta&sala}. Recording the $\lambda$-value of a $\lambda$-connection defines a surjection $\Lambda : \Hodge_G(X) \to \AA^1$ with fibers
\begin{equation}
\label{eq triv rescaling}
\begin{split}
& \Hodge_G(X) \times_{\AA^1} \{0\} \cong \Higgs_G(X), \\
& \Hodge_G(X) \times_{\AA^1} \{\lambda\} \cong \Loc_G(X), \quad \text{for every $\lambda \neq 0$}, \\
& \Triv : \Hodge_G(X) \times_{\AA^1} (\AA^1 - 0) \xrightarrow{\cong} \Loc_G(X) \times (\AA^1 - 0), \\ 
&  \hspace{4.1cm}(P,\nabla^{\lambda}) \mapsto (P, \lambda^{-1} \cdot \nabla^{\lambda}, \lambda).
\end{split}
\end{equation}

\subsection{Betti shape and representations}
\label{se betti shape}
We conclude this section with the Betti part of higher non-abelian Hodge theory. On a scheme $X$ with underlying topological space $X^{top}$, Simpson \cite[§2.3]{simpson_dolbeault} defined the \textit{Betti shape} $X_{B}$ to be the constant stack associated to $X^{top}$. Given a reductive group $G$, we then define the \textit{Betti stack} to be  
\[
\Rep_G(X) = \Maps(X_B, BG), 
\]
the moduli stack that parameterises families of representations $\pi_1(X) \to G$ \cite[Remark 3.1.2]{porta&sala}. 

\section{Twistor space and the Deligne stack}
\label{se Deligne stack}

This section recalls the \textit{Deligne-Hitchin twistor space} and generalises the construction to the setting of derived analytic stacks. This results in the construction of the \textit{Deligne stack}. The twistor space constructions use the moduli spaces obtained by introducing Geiseker stability conditions for Higgs bundles, flat connections, $\lambda$-connections and holonomy representations. We recall the mapping stacks from the previous section and adopt the nomenclature introduced by Simpson \cite{simpson1, simpson2}:

\begin{center}
\begin{tabular}{c|c|c}
Simpson shapes $X_{\Sim}$ as &
Derived moduli stacks of & 
Moduli spaces of  
\\
classical 1-stacks &
the form $\Maps(X_{\Sim} , BG)$ 
& semistable objects \\
\hline
$X_{\Dol}$ & $\Higgs_G(X)$ & $\Mm_{\Dol}(X,G)$ \\
$X_{\dR}$  & $\Loc_G(X)$ & $\Mm_{\dR}(X,G)$ \\
$X_{\Hod}$ & $\Hodge_G(X)$ & $\Mm_{\Hod}(X,G)$ \\
$X_{B}$    & $\Rep_G(X)$ & $\Mm_{B}(X,G)$ \\
\end{tabular} 
\end{center}

\subsection{The Deligne-Hitchin twistor space}
\label{se construction}
Twistor theory \cite{penrose, ahs, hklr, salamon} packages the data of a hyperk\"ahler manifold $(M, g, J_1, J_2, J_3)$ into the holomorphic geometry of $\Tw(M)$, the \textit{twistor space} of $M$. For $M=\Mm_{\Dol}(X,G)$, the hyperk\"ahler structures arises from the non-abelian Hodge theorem of Corlette, Donaldson, Hitchin and Simpson \cite{corlette, donaldson, hitchin_self, simpson_higgs}, which constructs an isomorphism
\[
\NAH_1 : \Mm_{\Dol}(X,G) \xrightarrow{\cong} \Mm_{\dR}(X,G).
\]
Take $J_1 = J_{\Dol}$ to be the natural holomorphic structure on $\Mm_{\Dol}(\Sigma,G)$ and pullback the holomorphic structure $J_{\dR}$ on $\Mm_{\dR}(\Sigma,G)$ to define $J_2 := \NAH_1^{*}J_{\dR}$. Then, $J_1 \neq J_2$ is a consequence of the fact that $\NAH_1$ is real-analytic and non-holomorphic map. The remaining complex structure is $J_3 = J_1 J_2$, making up the hyperk\"ahler structure $(\Mm_{\Dol}(X,G), g_{\Hit}, J_1, J_2, J_3)$, where $g_{\Hit}$ is the Hitchin metric constructed by Hitchin \cite{hitchin_self}. 

Following suggestions of Deligne, the \textit{Deligne-Hitchin twistor space} $\Tw(\Mm_{\Dol}(X,G))$ corresponding to the hyperk\"ahler structure on $\Mm_{\Dol}(X,G)$ has been constructed by Simpson \cite[§4]{simpson_hodge_2}. As we now recall, the construction of $\Tw(\Mm_{\Dol}(X,G))$ is complex analytic, so it is more accurate to specify the base curve to be $\Sigma := X^{an}$, the underlying analytic curve. Associating the holonomy representation to a flat connection, the Riemann-Hilbert correspondence of Deligne and Simpson gives rise to a diffeomorphism 
\[
RH : \Mm_{B}(\Sigma,G) \xrightarrow{\cong} \Mm_{\dR}(\Sigma,G).
\]
Consider $\Mm_{\Hod}(\Sigma,G)$ over $\AA^1$ via the natural projection to $\lambda \in \AA^1$. The natural $\GG_m$-action on $\Mm_{\Hod}(\Sigma,G)$ that rescales $\lambda$-connections induces a diffeomorphism 
\[
\Triv : \Mm_{\Hod}(\Sigma,G) \times_{\AA^1} (\AA^1 - 0) \to \Mm_{\dR}(\Sigma,G) \times (\AA^1 - 0),
\]
\[
\quad (P, \nabla^{\lambda}) \mapsto \Big((P, \lambda^{-1} \cdot \nabla^{\lambda}) ~ , ~ \lambda \Big).
\]
We therefore obtain an analytic embedding 
\[
\imath : \Mm_{B}(\Sigma,G) \times (\AA^1 - 0) 
\xrightarrow{\Triv^{-1} \circ (RH \times \id)}
\Mm_{\Hod}(\Sigma,G) \times_{\AA^1} (\AA^1 - 0)
\hookrightarrow
\Mm_{\Hod}(\Sigma,G).
\]
Over $\ol{\Sigma}$, the complex analytic curve with complex conjugate holomorphic structure, one we denote the corresponding analytic embedding by $\overline{\imath}$. The Betti moduli space depends only on the underlying topology, so since $\Sigma^{top} = (\ol{\Sigma})^{top}$, we have $\Mm_{B}(\Sigma,G) = \Mm_{B}(\ol{\Sigma} , G)$. This allows us to define the following gluing.  

\begin{definition}(\cite[§4]{simpson_hodge_2})
\label{de twistor space}.
Let $\Sigma$ be a complex analytic curve and $G$ a reductive structure group. The corresponding \textit{Deligne-Hitchin twistor space} $\Tw(\Mm_{\Dol}(\Sigma,G))$ is the complex analytic manifold given by the following pushout square:
\begin{equation}
\label{eq define glu moduli}
\begin{tikzcd}
\Mm_{B}(\Sigma , G) \times (\AA^1 - 0) = \Mm_{B}(\ol{\Sigma} , G) \times (\AA^1 - 0)
\arrow[r, hook, "\ol\imath"]
\arrow[d, hook, "\imath"']
\arrow[dr, phantom, "\square"]
& \Mm_{\Hod}(\ol{\Sigma},G) \arrow[d]\\
\Mm_{\Hod}(\Sigma,G) \arrow[r] & \Tw(\Mm_{\Dol}(\Sigma,G))
\end{tikzcd}.
\end{equation}
\end{definition}
The pushout \eqref{eq define glu moduli} covers the transition function $(\lambda \mapsto \lambda^{-1}) \in \Aut(\AA^1 - 0)$ over the equator of the Riemann sphere $\PP^1 = \CC \cup \CC$. Consequently, the projections $\Lambda : \Mm_{\Hod}(\Sigma, G) \to \AA^1$ and $\ol{\Lambda} :  \Mm_{\Hod}(\ol{\Sigma}, G) \to \AA^1$ that record the $\lambda$-value of a $\lambda$-connection glue to define 
\[
\tau : \Tw(\Mm_{\Dol}(\Sigma,G)) \to \PP^1.
\]
Let us highlight the role of non-abelian Hodge theory in the twistor theory. Composing $
\NAH_{1} : \Mm_{\Dol}(\Sigma,G) \xrightarrow{\cong} \Mm_{\dR}(\Sigma,G)$ with the $\GG_m$-action defines a diffeomorphism 
\[
\Mm_{\Dol}(\Sigma,G) \times \AA^1 \xrightarrow{\cong} \Mm_{\Hod}(\Sigma,G), 
\quad ( (E,\phi) , \lambda ) \mapsto  
\Big \{ 
\begin{array}{l}
(E,\phi) \quad \text{ when } \lambda = 0 \\
\lambda \cdot \NAH_{1}(E,\phi) \text{ when } \lambda \neq 0
\end{array} 
\]
This glues to the corresponding map over $\ol{\Sigma}$ to define a homeomorphism 
\begin{equation}
\label{eq NAH}
\NAH : \Mm_{\Dol}(\Sigma,G) \times \PP^1 \xrightarrow{\cong} \Tw(\Mm_{\Dol}(\Sigma,G)). 
\end{equation}
With the contraction 
\[
\NAH_{\lambda} := \NAH( \, \cdot \, , \lambda )  : \Mm_{\Dol}(\Sigma,G) 
\xrightarrow{\cong}
\Tw(\Mm_{\Dol}(\Sigma,G)) \times_{\PP^1} \{\lambda\},
\]
one can define a holomorphic structure $J_{\lambda}$ on $\Mm_{\Dol}(\Sigma,G)$ by pullback along $\NAH_{\lambda}$. Consequently one varies the complex structure as one rotates $\lambda \in \PP^1$, giving rise to \textit{hyperk\"ahler rotation}. This idea shall be central to our analysis of hyperholomorphic sheaves in Section \ref{se twistoral BBB}. 

\subsection{Construction of the Deligne stack}
\label{se deligne stack}
We now emulate the construction of the Deligne-Hitchin twistor space in the category of derived analytic stacks. This idea, previously alluded to by Simpson \cite[pg. 55]{simpson_hodge}, invokes the theory of analytic stacks developed by Lurie and Porta \cite{lurie, porta_GAGA, porta_square}. In particular, Porta studies the theory of Simpson shapes in the complex analytic topology \cite{porta}, which, applied to $\Sigma=X^{an}$, allows us to study the analytic mapping stacks
\[
\Higgs_G(\Sigma) = \anMaps(\Sigma_{\Dol}, BG),
\]
\[
\Loc_G(\Sigma) = \anMaps(\Sigma_{\dR},BG),
\]
\[
\Hodge_G(\Sigma) = \anMaps(\Sigma_{\Hod} / \AA^1,BG),
\]
\[
\Rep_G(\Sigma) = \anMaps(\Sigma_{B},BG).
\]
We require a Riemann-Hilbert correspondence that applies in this setting. Inspired by ideas of Simpson \cite{simpson1, simpson2}, this has been achieved by Porta \cite[Theorem 2]{porta} and Holstein-Porta \cite[Theorem 1.5]{holstein&porta}, who construct a remarkable morphism $\nu_{RH} : \Sigma_{\dR} \to \Sigma_{B}$ and study the equivalence 
\[
RH := \nu_{RH}^{*}: \Rep_G(\Sigma) \xlongrightarrow{\cong} \Loc_G(\Sigma).
\]
By rescaling $\lambda$-connections with the natural $\GG_m$-action, one has the equivalence $\Triv$ from \eqref{eq triv rescaling} and the following inverse, which acts as $(\nabla, \lambda) \mapsto \lambda \cdot \nabla$:
\[
\Triv^{-1} : \Loc_G(\Sigma) \times (\AA^1 - 0) \to \Hodge_G(\Sigma) \times_{\AA^1} (\AA^1 - 0).
\]
Let $\ol{\Sigma}$ denote the complex analytic curve with complex conjugate holomorphic structure. The corresponding morphisms shall be denoted by $\ol{RH}$ and $\ol{\Triv}^{-1}$. The Betti shape depends only on the underlying topology, so one has $\Sigma_B = \ol{\Sigma}_B$ and $\Rep_G(\Sigma) = \Rep_G(\ol{\Sigma})$. Placing all of these equivalences together allows us to define a pair of immersions 
\[
\begin{tikzcd}
\Rep_G(\ol{\Sigma}) \times (\AA^1 - 0) \quad
\arrow[r, "\ol{\Triv}^{-1} \circ (\ol{RH} \times \id)"]
& 
\quad \Hodge_G(\ol{\Sigma}) \times_{\AA^1} (\AA^1 - 0)
\arrow[r, hook]
&
\Hodge_G(\ol{\Sigma}) 
\\
\Rep_G(\Sigma) \times (\AA^1 - 0) \quad
\arrow[r, "\Triv^{-1} \circ (RH \times \id)"]
\arrow[u, equal]
& 
\quad \Hodge_G(\Sigma) \times_{\AA^1} (\AA^1 - 0)
\arrow[r, hook]
&
\Hodge_G(\Sigma)
\end{tikzcd}
\]
These immersions fit into the following pushout square, where the pushout is once more a derived analytic stack by a theorem of Porta-Yu \cite[Theorem 1.6]{porta&yu_rep}. 
\begin{definition}
\label{de deligne stack}
Let $\Sigma$ be a complex analytic curve and let $G$ be a reductive group. The corresponding \emph{Deligne stack}, $\Deligne_G(\Sigma)$, is the complex analytic stack defined by the pushout  
\begin{equation}
\label{eq define glu}
\begin{tikzcd}[column sep = huge]
\Rep_G(\Sigma) \times (\AA^1 - 0) = \Rep_G(\ol{\Sigma}) \times (\AA^1 - 0) 
\arrow[r, hook] 
\arrow[d, hook] 
\arrow[dr, phantom, "\square"] 
& \Hodge_G(\ol\Sigma)    
\arrow[d, "\imath_{\ol{\Hod}}"]
\\
\Hodge_G(\Sigma) 
\arrow[r, "\imath_{\Hod}"']
&
\Deligne_G(\Sigma) 
\end{tikzcd}
\end{equation}
This pushout shall be denoted
\[\Deligne_G(\Sigma) = \Hodge_G(\Sigma) \cup_{\Rep} \Hodge_G(\ol{\Sigma}). 
\] 
\end{definition}
The surjections $\Lambda: \Hodge_G(\Sigma) \to \AA^1$ and $\ol{\Lambda}: \Hodge_G(\ol{\Sigma}) \to \AA^1$ that record the $\lambda$-value of a $\lambda$-connection are related by the antipodal map on the Riemann sphere $\PP^1 = \AA^1 \cup \AA^1$. One therefore obtains a pushout surjection   
\begin{equation}
\label{eq define tau}
\tau := \Lambda \cup_{\Rep} \ol \Lambda : \Deligne_G(\Sigma) \to \PP^1,
\end{equation}
such that $\Hodge_G(\Sigma)$ is the preimage of $\PP^1 - \infty$ and $\Hodge_G(\ol{\Sigma})$ is the preimage of $\PP^1 - 0$. Moreover, $\Higgs_G(\Sigma)$, $\Loc_G(\Sigma)$ and $\Higgs_G(\ol{\Sigma})$ are the fibers at $0, 1$ and $\infty$, respectively. 

Let $t_0 : \dSt \to \St$ denote the truncation functor that records the underlying classical 1-stack. Truncation is compatible both with colimits and the derived Riemann-Hilbert correspondence (see \cite[Theorem B]{porta}), so the pushout square \eqref{eq define glu} truncates to a pushout
\begin{equation}
\label{eq glue truncations}
t_0( \Deligne_G(\Sigma) ) \cong t_0(\Hodge_G(\Sigma)) \cup_{\Rep} t_0(\Hodge_G(\ol\Sigma)).
\end{equation}
The 1-stacks give rise to \textit{good moduli spaces} in the language of Alper \cite{alper_good}. With the open substacks 
\[
\Uu := t_0(\Hodge_G(\Sigma))^{\sst} \subset t_0(\Hodge_G(\Sigma)), \quad \ol{\Uu} := t_0(\Hodge_G(\ol{\Sigma}))^{\sst} \subset t_0(\Hodge_G(\ol{\Sigma})),
\]
consisting of Geiseker semistable objects, standard geometric invariant theory constructions result in good quotients, which define good moduli spaces 
\[
f : \Uu \to \Mm_{\Hod}(\Sigma, G), \quad \ol{f} : \ol{\Uu} \to \Mm_{\Hod}(\ol{\Sigma}, G). 
\]
As well as truncation, the gluing \eqref{eq glue truncations} is compatible with restriction to semistable objects, so we obtain a second sub-pushout 
\[
t_0( \Deligne_G(\Sigma) )^{\sst} := \Uu \cup_{\Rep} \ol{\Uu}.
\]
\begin{proposition}
\label{pr deligne moduli}
The gluing $f_{\Del} := f \cup \ol{f} :  t^0(\Deligne_G(\Sigma))^{\sst} \to \Tw(\Mm_{\Dol}(\Sigma, G))$ is a good moduli space. 
\end{proposition}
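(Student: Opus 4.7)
My plan is to verify the good moduli space property by invoking its étale-local nature on the target, combined with the compatibility of the pushout decompositions of both $t_0(\Deligne_G(\Sigma))^{\sst}$ and $\Tw(\Mm_{\Dol}(\Sigma,G))$.

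First, I would establish the individual pieces. Each of $f : \Uu \to \Mm_{\Hod}(\Sigma,G)$ and $\ol{f} : \ol\Uu \to \Mm_{\Hod}(\ol\Sigma, G)$ is a good moduli space by the standard GIT construction of the Hodge moduli spaces as developed by Simpson \cite{simpson_hodge_2, simpson1, simpson2}; this follows from Alper's reinterpretation of GIT quotients as good moduli spaces once the Geiseker semistable loci are identified. Restricting to the Riemann-Hilbert overlap, both $f$ and $\ol f$ descend over $\Rep_G(\Sigma) \times (\AA^1 - 0) \cong \Rep_G(\ol\Sigma) \times (\AA^1 - 0)$ to the same map $f_B \times \id : \Mm_B^{\sst}(\Sigma,G) \times (\AA^1 - 0) \to \Mm_B(\Sigma,G) \times (\AA^1 - 0)$, since the Riemann-Hilbert correspondence $RH$ is a diffeomorphism of the coarse moduli spaces that intertwines $f$ and $\ol f$ on the relevant open substacks. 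This is precisely the content needed so that the two maps $f$ and $\ol f$ glue under the universal property of the pushouts defining both source and target.

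Next I would assemble the gluing. The pushout \eqref{eq define glu moduli} presents $\Tw(\Mm_{\Dol}(\Sigma,G))$ as a complex analytic space obtained by glueing $\Mm_{\Hod}(\Sigma,G)$ and $\Mm_{\Hod}(\ol\Sigma,G)$ along the open analytic subspace $\Mm_B(\Sigma,G) \times (\AA^1 - 0)$; in particular, the images of $\Mm_{\Hod}(\Sigma,G)$ and $\Mm_{\Hod}(\ol\Sigma,G)$ form an open cover. By the compatibility just established, the universal property applied to the pushout defining $t_0(\Deligne_G(\Sigma))^{\sst}$ produces a unique morphism $f_{\Del} = f \cup \ol f$ making the obvious squares commute, and the preimages of the two charts $\Mm_{\Hod}(\Sigma,G)$ and $\Mm_{\Hod}(\ol\Sigma,G)$ under $f_\Del$ are precisely $\Uu$ and $\ol\Uu$ respectively.

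Finally I would apply locality. Being a good moduli space in the sense of Alper \cite{alper_good} is étale-local on the target (and in particular local with respect to open covers); explicitly, if $\{V_i\}$ is an open cover of $Y$ and $\mathcal{X} \to Y$ is a morphism such that each pullback $\mathcal{X} \times_Y V_i \to V_i$ is a good moduli space, then so is $\mathcal{X} \to Y$. Applying this to the open cover of $\Tw(\Mm_{\Dol}(\Sigma,G))$ by $\Mm_{\Hod}(\Sigma,G)$ and $\Mm_{\Hod}(\ol\Sigma,G)$, and using that $f$ and $\ol{f}$ are good moduli spaces for the respective preimages $\Uu$ and $\ol\Uu$, yields the conclusion that $f_{\Del}$ is itself a good moduli space. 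The main obstacle I anticipate is the verification that the overlap conditions match precisely under $RH$ at the level of semistable loci, i.e.\ that $\Uu \cap (\Rep_G(\Sigma) \times (\AA^1 - 0))$ and $\ol\Uu \cap (\Rep_G(\ol\Sigma) \times (\AA^1 - 0))$ coincide; this however follows from the fact that semistability for $\lambda$-connections with $\lambda \neq 0$ reduces via rescaling to semistability for flat connections, which is a topological (hence Betti) notion invariant under complex conjugation of $\Sigma$.
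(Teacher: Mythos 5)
Your overall strategy --- cover $\Tw(\Mm_{\Dol}(\Sigma,G))$ by the two Hodge charts and use the fact that being a good moduli space is local on the target --- is in essence the same gluing argument the paper runs; the paper packages it via Alper's gluing criterion \cite[Proposition 7.9]{alper_good}, which is the same statement in a different form. The gap is that your argument hinges on the assertion that $f_{\Del}^{-1}(\Mm_{\Hod}(\Sigma,G)) = \Uu$ and $f_{\Del}^{-1}(\Mm_{\Hod}(\ol{\Sigma},G)) = \ol{\Uu}$, and you present this as following from the universal property of the pushout. It does not: the universal property only produces the map $f_{\Del}$. Good moduli space maps collapse S-equivalence classes, so a priori a point of $\Uu \cap \ol{\Uu}$ (lying over $\lambda \neq 0$) could have the same image under $f$ as a point of $\Uu \setminus \ol{\Uu}$ (lying over $\lambda = 0$); in that case $f_{\Del}^{-1}(\Mm_{\Hod}(\ol{\Sigma},G)) = \ol{\Uu} \cup f^{-1}\bigl(f(\Uu\cap\ol{\Uu})\bigr)$ would strictly contain $\ol{\Uu}$, the restriction of $f_{\Del}$ over that chart would no longer be $\ol{f}$, and your locality argument would not apply. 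What must be verified is exactly Alper's saturation condition $f^{-1}\bigl(f(\Uu\cap\ol{\Uu})\bigr) = \Uu\cap\ol{\Uu}$ (and its conjugate), and this is the entire content of the paper's proof.

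Note also that the ``main obstacle'' you flag --- that the semistable loci over the two charts match on the overlap under $RH$ --- is only what is needed for $\Uu\cap\ol{\Uu}$ to be well defined; it is not the saturation condition. The missing verification is short: $\Uu\cap\ol{\Uu} = \Lambda^{-1}(\PP^1 - \{0,\infty\})$, and $\Lambda : \Uu \to \PP^1 - \infty$ factors through $f$, so $\Uu\cap\ol{\Uu}$ is the preimage under $f$ of an open subset of $\Mm_{\Hod}(\Sigma,G)$ and is therefore automatically saturated (no two points with distinct $\lambda$-values can be identified by $f$). With that one observation added, your argument closes and coincides with the paper's.
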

\begin{proof}
By a criterion of Alper \cite[Proposition 7.9]{alper_good}, the claim is equivalent to checking that 
\[
f^{-1} \circ f (\Uu \cap \ol{\Uu}) = \Uu \cap \ol{\Uu} \quad \text{and} \quad \ol{f}^{-1} \circ \ol{f} (\Uu \cap \ol{\Uu}) = \Uu \cap \ol{\Uu}.
\]
This is an immediate consequence of the facts that $\Uu \cap \ol{\Uu}$ is the restriction to $\PP^1 - \{0, \infty\}$ and that there exists commutative triangles 
\[
\begin{tikzcd}
\Uu \arrow[d, "f"'] \arrow[r, "\Lambda"] & \PP^1 - \infty \\
\Mm_{\Hod}(\Sigma, G) \arrow[ur]
\end{tikzcd}, 
\quad 
\begin{tikzcd}
\ol{\Uu} \arrow[d, "\ol{f}"'] \arrow[r, "\ol{\Lambda}"] & \PP^1 - 0 \\
\Mm_{\Hod}(\ol{\Sigma}, G) \arrow[ur]
\end{tikzcd}.
\qedhere
\]
\end{proof}

We conclude this section with some speculations on the geometry of the Deligne stack. 
\begin{remark} There does not yet exist a notion of a hyperk\"ahler and twistor geometry in the realm of stacks, although see the tentative definition of Katzarkov-Pandit-Spaide \cite[Definition 6.3]{KPS} and the research program described therein\footnote{We thank Pranav Pandit for discussions on the possibility of derived hyperk\"ahler geometry.}. It is consequently tempting to refer to $\Deligne_G(\Sigma)$ as the "twistor stack" of $\Higgs_G(\Sigma)$, which we hope is confirmed in future research. 

It would also be interesting to explore the existence of a \textit{Deligne shape}\footnote{We note that in the terminology of Porta-Sala \cite{porta&sala}, the \textit{Deligne shape} is what we call the \textit{Hodge shape}.} $\Sigma_{\Del}$, defined in terms of Simpson's higher non-abelian Hodge theory, that results in a mapping stack structure 
\[
\Deligne_G(\Sigma) \cong \Maps(\Sigma_{\Del}/\PP^1, BG).
\]   
\end{remark}

\section{The Dirac-Higgs complex}
\label{se dirac-higgs}

This section begins by recalling Hitchin's analytic construction of the Dirac-Higgs bundle on $\Mm_{\Dol}(X,G)$ and the subsequent algebraic intepretation of Hausel. We show how Hausel's interpretation defines a complex on $\Higgs_G(X)$ that we name the \textit{Dirac-Higgs complex}. We then compute an isomorphism between the adjoint-valued Dirac-Higgs complex and the tangent complex of $\Higgs_G(X)$.  

\subsection{History and constructions}
\label{se dirac-higgs constructions}
The Dirac-Higgs bundle is a hyperholomorphic vector bundle constructed by Hitchin \cite{hitchin_dirac} in terms of the underlying gauge theory. Let $\Bb$ denote the infinite dimensional affine space consisting of all Higgs bundles. Hitchin constructs a null space bundle $\Ker(\D^{*}) \to \Bb$ for a family of Dirac operators 
\[
\D_{(E,\phi)}^* = \begin{pmatrix} \overline{\partial}_E &  \phi \\ \phi^* & \partial_E \end{pmatrix}
\]
The descent of $\Ker(\D^{*})$ to the moduli space $\Mm_{\Dol}(X,GL_n)$ is controlled by the existence of \'etale-local universal families $\Uu_i \to \Zz_i$ for a sufficiently fine \'etale atlas $\{ \Zz_i \hookrightarrow \Mm_{\Dol}(X,GL_n)\}_{i \in I}$. The universal families exist via a result of Simpson \cite[Theorem 4.7]{simpson2}. One then gets a descent bundle $\Dd_i \to \Zz_i$ with fibers $\Ker(\D^{*}_{(E,\phi)})$, whose dimension is $2\rank(E)(g(X) - 1)$ by the Atiyah-Singer Index Theorem (see \cite[Proposition 2.1.10]{blaavand}). In particular the rank is constant, and the family of locally free sheaves $\{ \Dd_i \to \Zz_i \}_{i \in I}$ is called the \textit{Dirac-Higgs bundle}. 

An algebraic interpretation of the Dirac-Higgs bundle is constructed by Hausel \cite{hausel} in terms of the projection $p_2 : X \times \Zz_i \to \Zz_i$ and the derived pushforward $R^1p_{2,*}\Uu_i$. The relation between these constructions is the existence of an analytic isomorphism 
\begin{equation}
\label{eq hausel}
\Dd_i \cong (R^1p_{2,*}\Uu_i)^{an},
\end{equation}
that arises from Hodge theory \cite{blaavand, hausel}. The \'etale-local perspective one is forced to take in the above constructions comes at the price of introducing a gerbe, defined by cocycles over $\Zz_i \cap \Zz_j$. The Dirac-Higgs bundle twisted by this gerbe is studied by Blaavand \cite{blaavand} and the first author alongside Jardim \cite{franco&jardim}. In the case $G = GL_1$, one happens to have a global universal family, where the corresponding global Dirac-Higgs bundle is studied by Bonsdorff \cite{bonsdorff_1} and Frejlich-Jardim \cite{frejlich&jardim}. 

These constructions indicate two reasons why the Dirac-Higgs bundle should live on the stack $\Higgs_G(X)$ rather than the moduli space $\Mm_{\Dol}(X,G)$. The first is that Hitchin's initial constructions are performed on the underlying gauge theory $\Bb$, the space of all $\CC$-valued points on $\Higgs_G(X)$. One could consider relative Dirac operators adapted to general S-valued points on $\Higgs_G(X)$, giving rise to a family of Dirac operators. Rather than take this route, we take note of the second reason for working on the stack: a globally defined universal family on $\Higgs_G(X)$ always exists, or in other words, the gerbe from the above constructions trivialises after passing to $\Higgs_G(X)$.  

With this in mind, our Dirac-Higgs constructions on $\Higgs_G(X)$ are as follows. Let $\Uu = (\Ff, \Theta)$ denote the universal $G$-Higgs bundle, which consists of a $G$-bundle $\Ff \to X \times \Higgs_G(X)$ and a relative Higgs field $\Theta \in H^0(X, \ad(\Ff) \otimes p_1^{*}K_X)$. To study general reductive group $G$ we introduce a representation $\rho : G \to GL_n$. Denote by $\rho(\Ff)$ the associated vector bundle and by $\rho(\Theta) \in H^0(X, \End(\rho(\Ff) \otimes p_1^{*}K_X)$ the associated section. We interpret the pair $\rho(\Uu) = (\rho(\Ff), \rho(\Theta))$ as a two term complex 
\[
\left[ \rho(\Ff) \xrightarrow{\rho(\Theta)} \rho(\Ff) \otimes p_1^{*}K_X \right] \in \Perf(X \times \Higgs_G(X)).
\]
Then, with the projection $X \times \Higgs_G(X) \xrightarrow{p_2} \Higgs_G(X)$, one has the following stack theoretic generalisation of the Dirac-Higgs bundle.
\begin{definition}
\label{de dirac-higgs}
The \emph{($\rho$-valued) Dirac-Higgs complex} on $\Higgs_G(X)$ is given by 
\[
\DD_{\rho} = Rp_{2,*} \rho(\Uu) \in \Perf( \Higgs_G(X) ), 
\]
and the \textit{($\rho$-valued) Dirac-Higgs sheaf} is the degree-one cohomology sheaf $H^1(\DD_{\rho})$.
\end{definition}

After truncation to the classical 1-stack $t_0(\Higgs_G(X))$ and restriction to the semistable locus, the Dirac-Higgs complex descends as expected:

\begin{proposition}
Let $G=GL_n$ and let $\rho =\id$ be the standard representation. The Dirac-Higgs sheaf $H^1(\DD_{\id})$ descends to the Dirac-Higgs bundle on the moduli space 
\[
f_{\Dol} : t_0(\Higgs_G(X))^{\sst} \to \Mm_{\Dol}(X,G).
\]
\end{proposition}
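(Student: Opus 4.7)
The plan is to reduce the statement to Hausel's étale-local construction recalled in Section \ref{se dirac-higgs constructions}. Fix an étale atlas $\{\phi_i:\Zz_i\hookrightarrow \Mm_{\Dol}(X,G)^{\sst}\}_{i\in I}$ equipped with local universal families $\Uu_i\to X\times\Zz_i$. Since the stack $t_0(\Higgs_G(X))^{\sst}$ carries a globally defined universal Higgs bundle $\Uu$, its universal property produces canonical lifts $\widetilde{\phi}_i:\Zz_i\to t_0(\Higgs_G(X))^{\sst}$ with $f_{\Dol}\circ\widetilde{\phi}_i=\phi_i$ and $(\widetilde{\phi}_i\times\id_X)^{*}\Uu\cong\Uu_i$. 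This is precisely the feature distinguishing the stack from the moduli space which is emphasised in Section \ref{se dirac-higgs constructions}.

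The key technical step is flat base change along the Cartesian square for the proper morphism $p_2$. Since $\Uu$ is a two-term complex of locally free sheaves, one obtains
\[
\widetilde{\phi}_i^{*}\DD_{\id}\;=\;\widetilde{\phi}_i^{*}\,Rp_{2,*}\Uu\;\cong\;Rp_{2,*}\Uu_i,
\]
and hence $\widetilde{\phi}_i^{*}H^1(\DD_{\id})\cong R^1p_{2,*}\Uu_i$. Hausel's isomorphism \eqref{eq hausel} identifies the analytification of the right-hand side with the local Dirac-Higgs bundle $\Dd_i$, so on each étale chart the analytification of $H^1(\DD_{\id})$ matches the local piece of the Dirac-Higgs bundle. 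The cocycles gluing the $\{\Dd_i\}$ into the Dirac-Higgs bundle on $\Mm_{\Dol}(X,G)^{\sst}$ are induced by the transition isomorphisms between the $\Uu_i$, and these are precisely the transitions recorded by $H^1(\DD_{\id})$ as a sheaf on the stack, so descent along $f_{\Dol}$ follows from naturality of the formation of $H^1(\DD_{\id})$.

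The main obstacle is the central $\GG_m$-action of $Z(GL_n)$ on $\Uu$, which scales $H^1(\DD_{\id})$ by the standard character. This implies $H^1(\DD_{\id})$ does not descend to an honest sheaf on $\Mm_{\Dol}(X,G)^{\sst}$, but rather to a twisted sheaf for the gerbe on $\Mm_{\Dol}(X,G)^{\sst}$ that is absorbed into the stackiness of $t_0(\Higgs_G(X))^{\sst}$; one must verify that this is exactly the gerbe appearing in the classical constructions of Blaavand and Franco--Jardim \cite{blaavand, franco&jardim}. This compatibility follows from the $Z(GL_n)$-equivariance of $Rp_{2,*}$ applied to $\Uu$, but warrants explicit verification to close the argument.
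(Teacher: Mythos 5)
Your argument is essentially the paper's: an \'etale atlas, flat base change for $Rp_{2,*}$ along it, and identification of the resulting $R^1q_{2,*}\Uu_i$ with the local Dirac--Higgs bundles $\Dd_i$. The only cosmetic difference is the direction: the paper starts from an atlas $\phi_i : \Zz_i \to t_0(\Higgs_{GL_n}(X))^{\sst}$ of the stack and observes that the pullbacks $\phi_i^{*}\Uu$ are \'etale-local universal families for the induced atlas $f_{\Dol}\circ\phi_i$ of the moduli space, whereas you start from an atlas of $\Mm_{\Dol}(X,GL_n)$ with local universal families and lift it to the stack via the classifying maps; these are interchangeable. Your third paragraph, however, treats as an open verification something that is not actually required for the statement as the paper formulates it. The weight-one action of the central $\GG_m\subset\Aut$ on $\Uu$ is a genuine phenomenon, and it is exactly why the $\Uu_i$ (hence the $\Dd_i$) glue only up to a gerbe --- but the paper's Section \ref{se dirac-higgs constructions} defines the ``Dirac--Higgs bundle on the moduli space'' to \emph{be} the \'etale-local family $\{\Dd_i\to\Zz_i\}_{i\in I}$, deferring the gerbe-twisted global object to Blaavand and Franco--Jardim. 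So ``descends to the Dirac--Higgs bundle'' here means precisely the chartwise identifications $H^1(\DD_{\id})|_{\Zz_i}\cong\Dd_i$ that your first two paragraphs (and the paper's proof) establish; no further equivariance check is needed to close the argument, and indeed demanding descent to an honest sheaf on the coarse space would make the statement false for the reason you identify.
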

\begin{proof}
Given a sufficiently fine \'etale atlas $\phi_i : \Zz_i \to t_0(\Higgs_{GL_n}(X))^{\sst}$, the pullbacks $\phi_i^{*}\Uu$ are \'etale-local universal families for the atlas $f_{\Dol} \circ \phi_i : \Zz_i \to \Mm_{\Dol}(X,GL_n)$. Therefore $\phi_i^{*}\Uu$ can be used to construct the Dirac-Higgs bundle: $\Dd_i = R^1q_{2,*}(\phi_i^{*}\Uu)$, where $q_2 : X \times \Zz_i \to \Zz_i$ is the natural projection. By an application of base change, this is isomorphic to 
\[
\Dd_i \cong H^1(\DD_{\rho})|_{\Zz_i}. \qedhere
\] 
\end{proof}
At a $\CC$-valued point $(P,\theta) \in \Higgs_G(X)$, the Dirac-Higgs complex has fibers
\begin{equation} 
\label{eq fiber Dirac-Higgs complex} 
\DD_\rho | _{(P,\theta)} 
\cong H_{\Dol}^{\bullet} \left( \rho(\Uu) |_{X \times {\{(P,\theta) \} }} \right) 
= H_{\Dol}^{\bullet}( \rho(P,\theta) ), 
\end{equation}
so $\DD_\rho$ is a family of Dolbeault cohomologies. 

\subsection{The adjoint representation}
For the adjoint representation $\rho = \ad : G \to GL(\gG)$, the formula in \eqref{eq fiber Dirac-Higgs complex} yields the deformation theory of a Higgs bundle on a smooth projective curve. This observation extends to an isomorphism between the tangent complex on $\Higgs_G(X)$ and the adjoint-valued Dirac-Higgs complex.

\begin{proposition}
\label{pr tangent complex and Dirac-Higgs}
The tangent complex $\TT_{\Higgs}$ of $\Higgs_G(X) = \Maps(X_{\Dol}, BG)$ is isomorphic to
\[
\TT_{\Higgs} \cong Rq_{2,*} \ad(\Vv_{\Dol})[1] 
\cong Rp_{2,*} \ad(\Uu)[1].
\]
In particular, $\TT_{\Higgs}$ is isomorphic to the adjoint-valued Dirac-Higgs complex $\DD_{\ad}:= Rp_{2,*} \ad(\Uu)[1]$. 
\end{proposition}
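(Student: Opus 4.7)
The plan is to compute the tangent complex of $\Higgs_G(X) = \Maps(X_{\Dol}, BG)$ via a standard mapping-stack formula, and then transport the resulting expression from $X_{\Dol}$ to $X$ using the atlas $\kappa \colon X \to X_{\Dol}$ together with the families version of Proposition \ref{pr dol cohomology}.

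For the first isomorphism, I would apply the general principle that for a derived mapping stack $\Maps(Z, Y)$ satisfying suitable finiteness, the tangent complex at an $S$-point classified by an evaluation $\ev \colon Z \times S \to Y$ is given by $R\pi_{2,*}\, \ev^{*}\TT_{Y}$, where $\pi_2 \colon Z \times S \to S$ is the projection. Specialising to $Y = BG$, whose tangent complex is $\gG[1]$ with the adjoint $G$-action, and taking $\ev$ to classify the universal bundle $\Vv_{\Dol} \to X_{\Dol} \times \Higgs_G(X)$, one has $\ev^{*}\TT_{BG} \cong \ad(\Vv_{\Dol})[1]$ and hence
\[
\TT_{\Higgs} \cong Rq_{2,*}\, \ad(\Vv_{\Dol})[1].
\]

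For the second isomorphism, I would invoke Corollary \ref{co universal higgs and bun}, which identifies $(\kappa \times \id)^{\heart}\, \ad(\Vv_{\Dol}) \cong \ad(\Uu)$, together with the families extension of Proposition \ref{pr dol cohomology}. Applied to a sheaf $\Ee$ on $X_{\Dol} \times \Higgs_G(X)$ with associated relative Higgs sheaf $(E, \phi)$, the latter should give
\[
Rq_{2,*}\, \Ee \cong Rp_{2,*}\, \bigl[\, E \xrightarrow{\phi} E \otimes p_1^{*}K_X\, \bigr],
\]
since the Dolbeault complex of a Higgs sheaf on a curve has only two terms. Specialising to $\Ee = \ad(\Vv_{\Dol})$ and using the interpretation of $\ad(\Uu) = (\ad(\Ff), \ad(\Theta))$ as a two-term complex from Section \ref{se dirac-higgs constructions}, one obtains $Rq_{2,*}\, \ad(\Vv_{\Dol}) \cong Rp_{2,*}\, \ad(\Uu)$, and shifting by $[1]$ yields the claimed isomorphism.

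The main obstacle lies in making the families extension of Proposition \ref{pr dol cohomology} precise, as the paper asserts it but does not spell it out. Concretely, one must verify that the equivalence \eqref{eq relative BNR} between coherent sheaves on $X_{\Dol} \times \Higgs_G(X)$ and relative Higgs sheaves on $X \times \Higgs_G(X)$ intertwines $Rq_{2,*}$ with the composition of Dolbeault resolution and $Rp_{2,*}$. This should reduce, via proper base change (valid because $X$ is smooth projective so that $p_2$ is proper), to the pointwise statement of Proposition \ref{pr dol cohomology}, together with the naturality of the Dolbeault resolution in the Higgs field parameter. Once this compatibility is in place, the two isomorphisms assemble directly into the claimed description of $\TT_{\Higgs}$.
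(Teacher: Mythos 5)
Your proposal is correct and follows essentially the same route as the paper: To\"en's mapping-stack formula gives $\TT_{\Higgs} \cong Rq_{2,*}\,\ev^{*}\TT_{BG} \cong Rq_{2,*}\ad(\Vv_{\Dol})[1]$, and the passage to $Rp_{2,*}\ad(\Uu)[1]$ is obtained from the universal isomorphism of Corollary \ref{co universal higgs and bun} together with the families ("moreover") statement of Proposition \ref{pr dol cohomology}, which the paper packages as the equivalence of functors $Rp_{2,*} \simeq Rq_{2,*}(\kappa\times\id)_{\heart}$. Your closing remark that the families compatibility should be verified via proper base change is a reasonable gloss on a step the paper simply cites from Simpson, not a departure from its argument.
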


\begin{proof}
Let $\gamma$ denote the quotient map $\Spec(\CC) \to BG$, the universal $G$-bundle on $BG$. The tangent complex of $BG$ is given by the 1-shifted vector space $\TT_{BG} = \gG[1] \cong \ad(\gamma)[1]$. Let $\ev : X_{\Dol} \times \Maps( X_{\Dol} , BG)$ denote the evaluation map. Then, $\gamma$ pulls back to $\Vv_{\Dol} = \ev^{*}\gamma$. Consider the commutative diagram 
\begin{equation*}
\begin{tikzcd}[column sep = huge]
BG & X_{\Dol} \times \Maps( X_{\Dol} , BG) \arrow[l, "\ev"'] \arrow[r, "q_2"] 
&  \Maps( X_{\Dol} , BG ) \\
&  X \times \Maps(X_{\Dol} , BG) \arrow[ur, "p_2"'] \arrow[u, "\kappa \times 1"']
& 
\end{tikzcd}.
\end{equation*}
A formula of To\"en \cite[§3.4]{toen} computes the tangent complex of a mapping stack, first in the case where the source stack is a smooth scheme, but also see the subsequent discussion for the statement that applies to $X_{\Dol}$. To\"en's formula provides the first isomorphism:
\begin{align*} 
\TT_{\Higgs} & \cong Rq_{2,*} \circ \ev^{*} \TT_{BG} \\
& \cong Rq_{2,*} \ad(\ev^{*}\gamma)[1] \\
& \cong Rq_{2,*} \ad(\Vv_{\Dol})[1].
\end{align*}
The second isomorphism follows from the formula $(\kappa\times \id)^{\heart} \rho(\Vv_{\Dol}) \cong \rho(\Uu)$ from Corollary \ref{co universal higgs and bun} and the moreover statement in Proposition \ref{pr dol cohomology}, which provides an equivalence of functors
\[
Rp_{2,*} \simeq Rq_{2,*} (\kappa \times \id)_{\heart}.
\]
One can therefore write 
\begin{align*}
Rq_{2,*} \ad(\Vv_{\Dol})[1] 
& \cong Rq_{2,*} (\kappa \times \id)_{\heart}(\kappa \times \id)^{\heart} \ad(\Vv_{\Dol})[1] \\
& \cong Rp_{2,*} \ad(\Uu)[1]. \qedhere
\end{align*}
\end{proof}

\section{Integral functors in non-abelian Hodge theory}
\label{se integral functors}
Integral functors are enjoyed by the derived categories of all geometric objects. In derived algebraic geometry, they play an extra special role thanks to powerful representability theorems \cite{BZFN, BZNP}. It is therefore natural to rewrite the Dirac-Higgs complex within an integral functor formalism. On the trivial Higgs bundle $(\Oo_X, 0)$, one simply adds the trivial twist:
\[
\DD_{\rho} = Rp_{2,*} (\rho(\Uu) \otimes p_1^{*}(\Oo_X, 0) ),
\]
thus evaluating an integral functor $\Hgs(X,K_X) \to \Perf(\Higgs_G(X))$ with universal integral kernel. This section is dedicated to the construction and study of similar integral functors in the context of non-abelian Hodge theory that we name \textit{Dolbeault, de Rham, Hodge} and \textit{Betti} functors. Finally, we construct a \textit{Deligne functor} as a gluing of complex conjugate analytic Hodge functors. 

\subsection{Construction of integral functors}
\label{se bonsdorff}
\label{se integral functors and GL}
\label{se exchange hecke wilson}

Our functors are parameterised by a formal variable\footnote{The notation $\Sim$ is chosen in homage to Carlos Simpson's foundational papers on higher non-abelian Hodge theory \cite{simpson_hodge_2, simpson_dolbeault, simpson_dolbeault_2,simpson_hodge}.} $\Sim \in \{ \Dol, \dR, \Hod, B \}$ and a representation $\rho : G \to GL_n$. It will be convenient to introduce the notation $\AA^{\Hod} = \AA^1$ and $\AA^{\Dol} = \AA^{\dR} = \AA^{B} = \{ 0 \}$. We define integral functors with kernel taken to be the associated vector bundle of the universal $G$-bundles $\Vv_{\Sim} \to X_{\Sim} \times_{\AA^{\Sim}} \Maps(X_{\Sim}, BG)$.  

We define our functors on categories $\Perf(\cdot) \subset \QC(\cdot)$ of perfect complexes. This is possible by Porta-Sala's notion of a \textit{categorically proper morphism} between derived stacks, which satisfies the coherence property that perfect complexes pushforward to perfect complexes \cite[Proposition 2.3.23]{porta&sala}. Since $X_{\Sim} \to \AA^{\Sim}$ is categorically proper in all cases \cite[Proposition 3.1.1, Proposition 4.1.1, Lemma 5.3.2]{porta&sala}, it follows that pushforward along the projection $f_2$ in the diagram 
\[
X_{\Sim} \xleftarrow{f_1} X_{\Sim} \times_{\AA^{\Sim}} \Bun_G(X_{\Sim}) \xrightarrow{f_2} \Bun_G(X_{\Sim}),
\]
preserves the subcategories $\Perf(\cdot) \subset \QC(\cdot)$ of perfect complexes. 

\begin{definition}
\label{de integral functors}
Let $\Sim \in \{ \Dol, \dR, \Hod, B \}$ and let $\rho : G\to GL_n$ be a representation. The \textit{algebraic Dolbeault, de Rham, Hodge and Betti functors} are  
\[
\Phi_{\Sim,\rho} : \Perf(X_{\Sim}) \to \Perf(\Maps(X_{\Sim}, BG)), \quad \Ee \longmapsto Rf_{2,*}( \rho(\Vv_{\Sim}) \otimes^{L} Lf_1^{*}\Ee ).
\] 
\end{definition}
Our focus here on compact objects $\Perf(\cdot) \subset \QC(\cdot)$ will play a key role in the later construction of (BBB)-branes. In the complex analytic topology, this definition works in exactly the same manner. Given an analytic space $\Sigma$, we recall our notation $\anMaps(\Sigma_{\Sim},BG)$ for the the derived analytic mapping stacks on the analytic Simpson shapes $\Sigma_{\Sim}$. Let $\Vv_{\Sim}^{an} \to \Sigma_{\Sim} \times_{\AA^{\Sim}} \Maps(\Sigma_{\Sim}, BG)$ denote the corresponding universal families and let $f_i^{an}$ denote the natural projections on this product. 

\begin{definition}
\label{de analytic integral functors}
Let $\Sim \in \{ \Dol, \dR, \Hod, \B \}$ and let $\rho : G\to GL_n$ be a representation. The  \textit{analytic Dolbeault, de Rham, Hodge and Betti functors} are 
\[
\Psi_{\Sim,\rho} : \Perf(\Sigma_{\Sim}) \to \Perf(\anMaps(\Sigma_{\Sim},BG)), \quad \Ee \longmapsto R(f_2^{an})_{*}( \rho(\Vv_{\Sim}^{an}) \otimes^{L} Lf_1^{an,*}\Ee ).
\] 
\end{definition}
Next we consider integral functors that are adapted to the abelian categories $\Hgs(Z, L)$, recalling this denotes Higgs sheaves valued in a line bundle $L \to Z$. Given a map $f:Y \to Z$, there exists pullback functors $f^{*} : \Hgs(Z, L) \to \Hgs(Y, f^{*}L)$ that sends a Higgs field $\phi : \Ee \to \Ee \otimes L$ to the pullback $f^{*}\phi : f^{*}\Ee \to f^{*}\Ee \otimes f^{*}L$. In the case where the morphisms considered are the natural projections 
\[
X \xleftarrow{p_1} X \times \Higgs_G(X) \xrightarrow{p_2}\Higgs_G(X), 
\]
one can then \textit{integrate in Dolbeault cohomology} to define a functor 
\[
p_{2,\star} : \Hgs(X \times \Higgs_G(X), p_1^{*}K_X) 
\to \QCoh(\Higgs_G(X)).
\]
We note that perfect and coherent objects are preserved if we assume $X$ to be a smooth projective variety, so that $p_2$ is smooth and proper. We also note that finiteness of Dolbeault cohomology of a Higgs sheaf $(E,\phi)$ is controlled by the usual cohomology of $E$. Finally, we note that Higgs sheaves admit a natural tensor product, acting as
\[
( \Ee_1 \xrightarrow{\phi_1} \Ee_1 \otimes p_1^{*}K_X ) 
\otimes
( \Ee_2 \xrightarrow{\phi_2} \Ee_2 \otimes p_1^{*}K_X ) 
= ( \Ee_1 \otimes \Ee_2 \xrightarrow{\phi_1 \otimes \id + \id \times \phi_2 } \Ee_1 \otimes \Ee_2 \otimes p_1^{*}K_X ).
\]
Let $\DHgs(X,K_X)$ denote the derived category of coherent Higgs sheaves and $\PHgs(X,K_X) \subset \DHgs(X,K_X)$ denote the subcategory of perfect complexes, which inherits derived versions of the functors just described. We now define integral functors, parameterised by a representation $\rho : G \to GL_n$, with integral kernels given by associated bundles of the universal G-Higgs bundle $\Uu \to X \times \Higgs_G(X)$. As for the previous definition, the corresponding objects/morphisms in the analytic topology shall carry the decoration $(\cdot)^{an}$. 
\begin{definition}
\label{de bonsdorff}
Let $\rho : G \to GL_n$ be a representation. The corresponding \emph{algebraic Bonsdorff functor} is given by 
\[
\Phi_{\Bon,\rho} : \PHgs(X ,K_X) \to \Perf(\Higgs_G(X)), 
\]
\[ 
(E,\phi)^\bullet \longmapsto Rp_{2,\star}\Big( \rho(\Uu) \otimes p_1^{*} (E,\phi)^\bullet \Big).
\] 
Similarly, the \emph{analytic Bonsdorff functor} is given by  
\[
\Psi_{\Bon, \rho} : \PHgs(\Sigma , K_\Sigma) \to \Perf(\Higgs_G(\Sigma)), 
\]
\[ 
(E,\phi)^\bullet \longmapsto R(p_{2}^{an})_{\star}\Big( \rho(\Uu^{an}) \otimes p_1^{an, \star} (E, \phi)^\bullet \Big).
\] 
\end{definition}
For $G = GL_1$, a similar functor has been defined and studied by Bonsdorff \cite[Definition 2.3.4]{bonsdorff_1}, where the geometry 
\[
\Higgs_{GL_1}(X) = T^{*}\Pic(X) \cong \Pic(X) \times H^0(X, K_X),
\]
was essential to his description. Returning to our initial motivation for this section, one can now write the Dirac-Higgs complex $\DD_{\rho} := Rp_{2,*}(\rho(\Uu))$ in terms of the Bonsdorff functor as 
\[
\DD_{\rho} = \Phi_{\Bon, \rho}(\Oo_X \xrightarrow{0} K_X ).
\]
This simple observation was one of the initial motivations for this paper, once it was noticed that such a description globalises an \'{e}tale-local observation made by the first author and Jardim \cite[Remark 4.5]{franco&jardim}. 

Next we compare the Bonsdorff and Dolbeault functors. Recalling Proposition \ref{pr QCoh Dol}, one has the equivalence between their domains:
\begin{equation}
\label{eq kappa equiv}
\kappa^{\heart} : \Perf(X_{\Dol}) \xrightarrow{\cong} \PHgs(X,K_X),
\end{equation}
with inverse $\kappa_{\heart}$. Moreover, Corollary \ref{co universal higgs and bun} provides an isomorphism between their integral kernels:
\begin{equation}
\label{eq subst}
(\kappa \times \id)^{\heart} \rho(\Vv_{\Dol}) \cong \rho(\Uu).
\end{equation} 
\begin{proposition}
\label{pr Dol Mod commute}
The algebraic Bonsdorff and Dolbeault functors fit into a commutative diagram 
\begin{equation*}
\begin{tikzcd}[column sep = huge]
\Perf(X_{\Dol}) 
\arrow[dd, "\kappa^{\heart}"'] \arrow[rd, "\Phi_{\Dol, \rho}"]  
\\
& \Perf(\Higgs_G(X)) 
\\
\PHgs(X,K_X)
\arrow[ru, "\Phi_{\Bon, \rho}"']
\end{tikzcd}.
\end{equation*}
Moreover, the same is true in the analytic topology.
\end{proposition}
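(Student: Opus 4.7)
The plan is a direct computation that reduces the commutativity to three pieces of information established earlier in the paper. Given $\Ee \in \Perf(X_{\Dol})$, the goal is to exhibit an isomorphism $\Phi_{\Bon, \rho}(\kappa^{\heart}\Ee) \simeq \Phi_{\Dol, \rho}(\Ee)$ functorial in $\Ee$. The three ingredients I would assemble are: (i) the universal-kernel identification $(\kappa\times\id)^{\heart}\rho(\Vv_{\Dol}) \cong \rho(\Uu)$ supplied by Corollary \ref{co universal higgs and bun}; (ii) the symmetric monoidality of the equivalence $(\kappa\times\id)^{\heart}$ together with its compatibility with pullback along the first factor, which for the commuting square with horizontal arrows $\kappa$ and $\kappa\times\id$ and vertical arrows $p_1, f_1$ yields $p_1^{*}\kappa^{\heart} \simeq (\kappa\times\id)^{\heart} f_1^{*}$; and (iii) the functorial extension of Proposition \ref{pr dol cohomology} furnishing an equivalence $Rp_{2,\star} \circ (\kappa\times\id)^{\heart} \simeq Rf_{2,*}$ on perfect complexes.

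Granted these, the verification is a single line:
\begin{align*}
\Phi_{\Bon,\rho}(\kappa^{\heart}\Ee)
&= Rp_{2,\star}\bigl(\rho(\Uu) \otimes p_1^{*}\kappa^{\heart}\Ee\bigr) \\
&\simeq Rp_{2,\star}(\kappa\times\id)^{\heart}\bigl(\rho(\Vv_{\Dol}) \otimes^{L} f_1^{*}\Ee\bigr) \\
&\simeq Rf_{2,*}\bigl(\rho(\Vv_{\Dol}) \otimes^{L} f_1^{*}\Ee\bigr)
= \Phi_{\Dol,\rho}(\Ee),
\end{align*}
where the first step uses (i) and (ii) to rewrite the integrand, and the second step uses (iii) to collapse the pushforward. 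Functoriality in $\Ee$ is automatic because each of the three natural isomorphisms is natural.

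The main technical point is justifying ingredient (iii) at the level of perfect complexes rather than merely of coherent Higgs sheaves or their bounded derived enhancements. This is already implicit in the moreover clause of Proposition \ref{pr dol cohomology} together with the categorical properness of $X_{\Dol} \to \{0\}$ (cf.\ the framework of Porta--Sala invoked in Definition \ref{de integral functors}), which ensures both functors $Rp_{2,\star}$ and $Rf_{2,*}$ preserve perfectness; the equivalence $(\kappa\times\id)^{\heart}$ is $t$-exact and symmetric monoidal and therefore extends to an equivalence on perfect complexes that intertwines these pushforwards. The analytic statement is then identical: replace $X$ by $\Sigma$, each stack by its analytification, and invoke the analytic counterparts of (i)--(iii) provided by the Porta--Yu and Porta--Sala frameworks recalled in Section \ref{se hodge}. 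No further input is needed.
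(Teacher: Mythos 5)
Your proposal is correct and follows essentially the same route as the paper's proof: the same three ingredients (the kernel identification from Corollary \ref{co universal higgs and bun}, compatibility of $\kappa^{\heart}$ with pullback and tensor product, and the pushforward comparison derived from Proposition \ref{pr dol cohomology}, which the paper writes as $p_{2,\star} \simeq q_{2,*}\circ(\kappa\times\id)_{\heart}$ and is equivalent to your (iii)) are assembled into the same one-line chain of isomorphisms, with the analytic case handled verbatim.
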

\begin{proof}
The proof is identical in the algebraic and analytic topologies, so we proceed with the algebraic case. Proposition \ref{pr dol cohomology} states that Dolbeault cohomology of a Higgs bundle satisfies 
\[
H^{\bullet}_{\Dol}( E,\phi ) \simeq H^{\bullet}(X, \kappa_{\heart}( E,\phi ) ),
\]
which moreover can be extended to families. Viewing pushforward is a relative version of cohomology, one has an equivalence of functors 
\[
p_{2,\star} \simeq q_{2,*} \circ (\kappa \times \id)_{\heart}.
\]
By functoriality, similar equivalences hold for pullback and tensor product:
\[
p_1^{*} \kappa^{\heart} \simeq (\kappa \times \id)^{\heart} q_1^{*}, \quad 
\kappa^{\heart} (\cdot) \otimes \kappa^{\heart} (\cdot) \simeq \kappa^{\heart} \Big( (\cdot) \otimes (\cdot) \Big).
\]
Fix $\Ff \in \Perf(X_{\Dol})$. In combination with \eqref{eq subst}, the above equivalences yield
\begin{align*}
\Phi_{\Bon, \rho} (\kappa^{\heart} \Ff^\bullet)
& = Rp_{2,\star} \Big( \rho(\Uu) \otimes p_1^{*} \kappa^{\heart} \Ff^\bullet \Big) \\
& \cong Rp_{2,\star} \Big( (\kappa \times \id)^{\heart} \rho(\Vv_{\Dol}) \otimes \kappa \times \id)^{\heart} q_1^{*} \Ff^\bullet \Big) \\
& \cong Rq_{2,*} (\kappa \times \id)_{\heart} (\kappa \times \id)^{\heart} \Big( \rho(\Vv_{\Dol}) \otimes q_1^{*} \Ff^\bullet \Big) \\
& \cong Rq_{2,*} \Big( \rho(\Vv_{\Dol}) \otimes q_1^{*} \Ff^\bullet \Big). \qedhere
\end{align*}
\end{proof}


We conclude this section by noting the Bonsdorff functor computes Dolbeault cohomology. 
\begin{proposition}
\label{pr bonsdorff fibers}
Fix $(E,\phi) \in \Hgs(X,K_X)$. At a point $f: \Spec(\CC) \to \Higgs_G(X)$ with corresponding $G$-Higgs bundle $(P,\theta) = (\id \times f)^{*}\Uu$ over $X$, the fibers of $\Phi_{\Bon, \rho}(E,\phi)$ are given by
\[
\Phi_{\Bon, \rho}(E,\phi) |_{(P,\theta)} \cong H^{\bullet}_{\Dol}( (E,\phi) \otimes \rho(P,\theta) ).
\]
\end{proposition}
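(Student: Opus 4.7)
The plan is to compute the fiber of $\Phi_{\Bon,\rho}(E,\phi)$ at $f$ via base change, and then recognise the resulting derived global sections as Dolbeault cohomology. To streamline the application of base change, I would first convert the Higgs pushforward into an ordinary sheaf-theoretic pushforward by invoking Proposition \ref{pr Dol Mod commute}, which gives
\[
\Phi_{\Bon,\rho}(E,\phi) \cong \Phi_{\Dol,\rho}\bigl(\kappa_{\heart}(E,\phi)\bigr) = Rf_{2,*}\bigl(\rho(\Vv_{\Dol}) \otimes^{L} Lf_1^{*}\kappa_{\heart}(E,\phi)\bigr)
\]
on $X_{\Dol}\times\Higgs_G(X)$, where $f_1,f_2$ are the projections.

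Next, I would pull back along the Cartesian square
\[
\begin{tikzcd}
X_{\Dol} \arrow[r, "\id\times f"] \arrow[d, "q"'] & X_{\Dol} \times \Higgs_G(X) \arrow[d, "f_2"] \\
\Spec(\CC) \arrow[r, "f"'] & \Higgs_G(X)
\end{tikzcd},
\]
applying derived base change (valid since $X_{\Dol}\to\Spec(\CC)$ is categorically proper, as recalled in Section \ref{se integral functors}) to obtain
\[
Lf^{*}\Phi_{\Bon,\rho}(E,\phi) \cong Rq_{*}\Bigl((\id\times f)^{*}\rho(\Vv_{\Dol}) \otimes^{L} (\id\times f)^{*}Lf_1^{*}\kappa_{\heart}(E,\phi)\Bigr).
\]
Since $f_1\circ(\id\times f)=\id_{X_{\Dol}}$, the second tensor factor is simply $\kappa_{\heart}(E,\phi)$, and Corollary \ref{co universal higgs and bun} identifies the first with $\kappa_{\heart}\rho(P,\theta)$. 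Using the symmetric monoidal property of $\kappa_{\heart}$ from Proposition \ref{pr QCoh Dol}, the tensor product becomes $\kappa_{\heart}\bigl(\rho(P,\theta)\otimes(E,\phi)\bigr)$.

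Finally, Proposition \ref{pr dol cohomology} applied to the structure map $q : X_{\Dol}\to\Spec(\CC)$ identifies $Rq_{*}\kappa_{\heart}(\Hh)$ with the Dolbeault cohomology $H^{\bullet}_{\Dol}(\Hh)$ for any Higgs sheaf $\Hh$. Specialising to $\Hh=\rho(P,\theta)\otimes(E,\phi)$ gives the claimed formula. The main delicate point is the applicability of derived base change for $f_2$, which is ensured by the categorical properness of $X_{\Dol}\to\Spec(\CC)$ together with the derived nature of the square; once this is in place, the rest of the argument is a sequence of routine identifications using the already-established dictionary between $X_{\Dol}$ and Higgs bundles on $X$.
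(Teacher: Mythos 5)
Your proof is correct and rests on the same core mechanism as the paper's: compute the fiber by base change along the point $f$, then recognise the resulting global sections as Dolbeault cohomology. The paper does this in one step, applying base change directly to the Higgs pushforward $Rp_{2,\star}(\rho(\Uu)\otimes p_1^{*}(E,\phi))$ on $X\times\Higgs_G(X)$ and reading off $H^{\bullet}_{\Dol}(\rho(P,\theta)\otimes(E,\phi))$ as the fiber. Your detour through $X_{\Dol}$ --- first rewriting $\Phi_{\Bon,\rho}$ as $\Phi_{\Dol,\rho}\circ\kappa_{\heart}$ via Proposition \ref{pr Dol Mod commute}, doing ordinary derived base change for $f_2:X_{\Dol}\times\Higgs_G(X)\to\Higgs_G(X)$, and then translating back with the monoidality of $\kappa_{\heart}$ and Proposition \ref{pr dol cohomology} --- is a more careful justification of the same computation: it replaces the slightly nonstandard base change for the ``integrate in Dolbeault cohomology'' pushforward $p_{2,\star}$ by the standard one for sheaves on the Dolbeault shape, at the cost of a few extra identifications (restriction of Corollary \ref{co universal higgs and bun} to the point, compatibility of $\kappa^{\heart}$ with pullback). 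Both routes are valid; yours makes explicit the bookkeeping the paper leaves implicit.
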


\begin{proof}
By an application of base change there exists an isomorphism of vector spaces 
\begin{align*}
\Phi_{\Bon}(E,\phi)|_{(P,\theta)} 
& = Rp_{2,*} ( \rho(\Uu) \otimes p_1^{*}(E,\phi) )|_{(P,\theta)}
\\
& \cong H^{\bullet}_{\Dol}\Big( \rho(\Uu)|_{X \times \{ (P,\theta) \}} \otimes p_1^{*}(E,\phi) |_{X \times \{ (P,\theta) \}} \Big) \\
& = H^{\bullet}_{\Dol}\Big( \rho(P,\theta) \otimes (E,\phi)  \Big). \qedhere
\end{align*}
\end{proof}

\subsection{The Deligne functor}
We now study the relationship between a pair of analytic Hodge functors and the pushout square from Definition \ref{de deligne stack},
\begin{equation}
\label{eq del again}
\begin{tikzcd} 
\Rep_G(\Sigma) \times (\AA^1 - 0) = \Rep_G(\ol\Sigma) \times (\AA^1 - 0)
\arrow[r, hook]
\arrow[dr, phantom, "\square"] 
\arrow[d, hook]
& 
\Hodge_G(\ol{\Sigma})
\arrow[d, hook]
\\
\Hodge_G(\Sigma)
\arrow[r, hook]
& 
\Deligne_G(\Sigma)
\end{tikzcd}.
\end{equation}
The maps that feature in this pushout are the Riemman-Hilbert correspondence and the rescaling map:  
\[
RH:\Rep_G(\Sigma) \to \Loc_G(\Sigma), \quad \Triv^{-1} : \Loc_G(\Sigma) \times (\AA^1 - 0) \to \Hodge_G(\Sigma) \times_{\AA^1} (\AA^1 - 0). 
\]
Since the property of a complex being \textit{perfect} is local, the above pushout induces the following pullback square. We pay particular attention to specifying the functors involved, as these are used in our subsequent constructions. 
\begin{equation}
\label{eq sheaves on deligne}
\begin{tikzcd} 
\Perf(\Deligne_G(\Sigma)) 
\arrow[r] 
\arrow[d] \arrow[dr, phantom, "\square"] 
& 
\Perf(\Hodge_G(\ol{\Sigma})) 
\arrow[d, "(\ol{RH} \times \id)^{*} \circ \ol{\Triv}_{*} \circ (\cdot)|_{\AA^1 - 0}"] 
\\
\Perf( \Hodge_G(\Sigma)) \quad \quad \quad 
\arrow[r, "(RH \times \id)^{*} \circ \Triv_{*} \circ (\cdot)|_{\AA^1 - 0}"'] 
& 
\quad \quad \quad 
\Perf(\Rep_G(\Sigma) \times (\AA^1 -0)  ) = \Perf(\Rep_G(\ol{\Sigma}) \times (\AA^1 -0) )
\end{tikzcd}.
\end{equation}
It follows that $\widetilde{\Gg} \in \Perf(\Deligne_G(\Sigma))$ can be represented as 
\begin{equation}
\label{eq tilde g}
\widetilde{\Gg} = \Gg \cup_{g} \ol{\Gg},
\end{equation}
consisting of local complexes $\Gg \in \Perf(\Hodge_G(\Sigma))$ and $\ol{\Gg} \in \Perf(\Hodge_G(\Sigma))$ together with a transition function $g: \Gg_{\Rep} \xrightarrow{\cong} \ol{\Gg}_{\Rep}$. Here we have introduced the notation $(\cdot)_{\Rep}$ for  
\begin{equation}  \label{eq definition of Gg_B}
\Gg_{\Rep} := (RH \times \id)^{*} \circ \Triv_{*}(\Gg|_{\AA^1 - 
 0}) \in \Perf(\Rep_G(\Sigma) \times (\AA^1 -0 )), 
\end{equation}
\begin{equation*}
\ol{\Gg}_{\Rep} := (\ol{RH} \times \id)^{*} \circ \ol{\Triv}_{*}( \ol\Gg|_{\AA^1 - 0} ) \in \Perf(\Rep_G(\ol{\Sigma}) \times (\AA^1 - 0 ) ).
\end{equation*}
A similar construction can be performed at the level of the Simpson shapes. With the morphisms 
\[
\nu_{RH} : \Sigma_{\dR} \to \Sigma_B, \quad \Triv_{\Sigma}: \Sigma_{\Hod} \times_{\AA^1} (\AA^1 - 0) \to \Sigma_{\dR} \times (\AA^1 - 0),
\]
we define the Cartesian diagram 
\begin{equation}
\label{eq Dom}
\begin{tikzcd} 
\Perf(\Sigma_{\Hod}) \times_B \Perf(\ol{\Sigma}_{\Hod}) 
\arrow[r] 
\arrow[d] \arrow[dr, phantom, "\square"] 
& 
\Perf(\ol{\Sigma}_{\Hod}) 
\arrow[d, "(\ol{\nu}_{RH} \times \id)_{*} (\ol{\Triv}_{\Sigma, *} \circ (\cdot)|_{\AA^1 - 0}"] 
\\
\Perf(\Sigma_{\Hod}) 
\quad \quad
\arrow[r, " (\nu_{RH} \times \id)_{*} \Triv_{\Sigma, *} \circ (\cdot)|_{\AA^1 - 0}"'] 
& 
\quad \quad 
\Perf(\Sigma_{B} \times (\AA^1 - 0)) = \Perf(\ol{\Sigma}_{B} \times (\AA^1 - 0))
\end{tikzcd}.
\end{equation}
By definition, $\widetilde{\Ff} \in \Perf(\Sigma_{\Hod}) \times_B \Perf(\ol{\Sigma}_{\Hod})$ is given by 
\begin{equation}
\label{eq tilde f}
\widetilde{\Ff} = \Ff \cup_f \ol{\Ff},
\end{equation}
consisting of complexes $\Ff \in \Perf(\Sigma_{\Hod})$ and $\ol{\Ff} \in \Perf(\ol{\Sigma}_{\Hod})$, together with a transition function $f: \Ff_B \xrightarrow{\cong} \ol{\Ff}_B$. Here we have introduced the notation $(\cdot)_{B}$ for
\begin{equation}  
\label{eq definition of Ff_B}
\Ff_B := (\nu_{RH} \times \id)_{*} \circ \Triv_{\Sigma, *} (\Ff|_{\AA^1 - 0} ) \in \Perf(\Sigma_B \times (\AA^1 - 0)),
\end{equation}
\begin{equation*}
\ol{\Ff}_B := (\ol{\nu}_{RH} \times \id)_{*} \circ \ol{\Triv}_{\Sigma, *}( \ol\Ff|_{\AA^1 - 0}  ) \in \Perf(\ol{\Sigma}_B \times (\AA^1 - 0)).
\end{equation*}
With this notation in place, the rest of this section is dedicated to completing the dashed arrow in the following diagram. The front and back squares are the Cartesian squares mentioned above. The diagonal arrows are are pair $\Psi_{\Hod, \rho}$ and $\Psi_{\ol{\Hod}, \rho}$ of analytic Hodge functors, defined over $\Sigma$ and $\ol{\Sigma}$, respectively. $\widehat{\Psi}_{B,\rho}$ is a relative version of the analytic Betti functor, taken over $\AA^1 - 0$. 
\[
\begin{tikzcd}[row sep={50,between origins}, column sep={70,between origins}]
& 
\Perf(\Sigma_{\Hod}) \times_B \Perf(\ol{\Sigma}_{\Hod}) 
\ar{rr}\ar{dd} 
\arrow[dl, dashed, "\Psi_{\Del, \rho}"'] 
& & 
\Perf(\ol{\Sigma}_{\Hod})
\vphantom{\times_{S_1}} \ar{dd} \ar[dl, "\Psi_{\ol{\Hod}, \rho}"]
\\
\Perf(\Deligne_G(\Sigma)) \ar[crossing over]{rr} \ar{dd} & & \Perf(\Hodge_G(\ol{\Sigma}))
\\
& \Perf(\Sigma_{\Hod}) \ar{rr} \ar[dl, "\Psi_{\Hod, \rho}"] & &  \Perf(\Sigma_{B} \times (\AA^1 - 0)) \vphantom{\times_{S_1}} \ar[dl, "\widehat{\Psi}_{B,\rho}"] 
\\
\Perf(\Hodge_G(\Sigma)) \ar{rr} && \Perf(\Rep_G(\Sigma) \times (\AA^1 - 0)) \ar[from=uu,crossing over]
\end{tikzcd}
\]
The main step to completing this diagram is to show that Hodge, de Rham and Betti functors commute with the morphisms that define the Cartesian squares. 

\begin{lemma}
\label{le commutative diagram}
Let $\widehat \Psi_{\dR , \rho}$ and $\widehat \Psi_{B , \rho}$ denote relative versions of the analytic de Rham and Betti functors, relative over $\AA^1 - 0$. These are simply the integral functors with kernels $\widehat{\Vv}_{\dR}$ and $\widehat{\Vv}_{B}$, the extensions of $\Vv_{\dR}$ and $\Vv_{B}$, relative to $\AA^1 - 0$. Then, there exists a commutative diagram  
\begin{equation}
\label{eq integral functors commute}
\begin{tikzcd}[column sep = huge]
\Perf( \Sigma_{\Hod}) 
\arrow[r, "\Psi_{\Hod, \rho}"]
\arrow[d, "\Triv_{\Sigma,*} \circ (\cdot)|_{\AA^1 - 0}"']
& \Perf ( \Hodge_G(\Sigma) ) 
\arrow[d, "\Triv_* \circ (\cdot)|_{\AA^1 - 0}" ] 
\\
\Perf( \Sigma_{\dR} \times (\AA^1 - 0) ) 
\arrow[r, "\widehat{\Psi}_{\dR ,\rho}"] 
\arrow[d, "(\nu_{RH} \times \id)_{*}"']
& \Perf( \Loc_G(\Sigma) \times (\AA^1 - 0) ) 
\arrow[d, "(RH \times \id)^{*}"] 
\\
\Perf( \Sigma_{B} \times (\AA^1 - 0))  
\arrow[r, "\widehat{\Psi}_{B , \rho}"] 
& \Perf( \Rep_G(\Sigma) \times (\AA^1 - 0) ) 
\end{tikzcd}.
\end{equation}
\end{lemma}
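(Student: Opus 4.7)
The plan is to split the diagram \eqref{eq integral functors commute} into its two horizontal squares and handle each separately. Both reduce to the same two ingredients: compatibility of the universal families on the four mapping stacks under the structural maps $\Triv$, $\nu_{RH}$ and $RH$, plus base change and the projection formula along the projection $f_2$.

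For the top (Hodge/de Rham) square, the key observation is that the $\GG_m$-rescaling $\Triv$ from \eqref{eq triv rescaling} together with its shape-level analogue $\Triv_\Sigma$ from \eqref{eq hodge shape projection} identifies the universal $\lambda$-connection restricted to $\AA^1 - 0$ with the trivial $(\AA^1 - 0)$-extension of the universal flat connection. Concretely, the naturality of the evaluation map yields an isomorphism
\[
(\Triv_\Sigma \times \Triv)^{*} \widehat{\Vv}_{\dR}^{an} \cong \Vv_{\Hod}^{an}|_{\AA^1 - 0}
\]
on the restricted product $(\Sigma_{\Hod} \times_{\AA^1} \Hodge_G(\Sigma))|_{\AA^1 - 0}$. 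The projection $f_2$ fits into the obvious Cartesian square relating source and target of the two integral functors, so base change and the projection formula applied to $R(f_2)_{*}$ transform $\Triv_{*} \circ (\cdot)|_{\AA^1 - 0} \circ \Psi_{\Hod, \rho}$ into $\widehat{\Psi}_{\dR, \rho} \circ \Triv_{\Sigma, *} \circ (\cdot)|_{\AA^1 - 0}$, as required.

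For the bottom (de Rham/Betti) square, I would use the derived Riemann-Hilbert morphism $\nu_{RH} : \Sigma_{\dR} \to \Sigma_B$ of Holstein-Porta, together with the fact that $RH$ is defined by composition with $\nu_{RH}$. Naturality of the evaluation map then gives
\[
(\id_{\Sigma_{\dR}} \times RH)^{*} \Vv_{\dR}^{an} \cong (\nu_{RH} \times \id_{\Rep_G(\Sigma)})^{*} \Vv_B^{an}
\]
on $\Sigma_{\dR} \times \Rep_G(\Sigma)$. Extending this identity relative to $\AA^1 - 0$, applying the projection formula along $\nu_{RH} \times \id$, and base changing across the Cartesian square whose horizontal arrows are induced by $RH$ and $\nu_{RH}$, one identifies $(RH \times \id)^{*} \circ \widehat{\Psi}_{\dR, \rho}$ with $\widehat{\Psi}_{B, \rho} \circ (\nu_{RH} \times \id)_{*}$.

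The main subtlety to check is that base change and the projection formula are legitimate in this derived analytic setting, particularly across $\nu_{RH}$, which is not a morphism of ordinary analytic spaces but a derived analytic refinement produced via the formalism of Porta and Holstein-Porta. This is precisely the framework in which such arguments are valid. One also needs the categorical properness of the projections $f_2^{\Sim}$, already invoked in Definition \ref{de analytic integral functors} via \cite[Proposition 2.3.23]{porta&sala} and the shape-specific results cited there, to guarantee that the integral functors and their relative versions all preserve the subcategories of perfect complexes along the way.
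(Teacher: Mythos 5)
Your proposal is correct and follows essentially the same route as the paper, which proves the lemma by splitting it into the two horizontal squares (Lemmas \ref{le commutivity of lower diagram} and \ref{le commutivity of upper diagram}) and in each case combines the universal kernel isomorphisms $(\nu_{RH} \times \id)^{*}\Vv_B \cong (\id \times RH)^{*}\Vv_{\dR}$ and $\Vv_{\Hod}|_{\AA^1 - 0} \cong (\Triv_\Sigma \times \Triv)^{*}\widehat{\Vv}_{\dR}$ with base change along the relevant Cartesian squares and the projection formula. Your remarks on the validity of base change in the derived analytic setting and on categorical properness match the justifications the paper relies on.
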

We delay the proof until the following section, where it is checked in Lemma \ref{le commutivity of lower diagram} and \ref{le commutivity of upper diagram}. First, let us instead describe the following consequence. 

\begin{proposition}
\label{pr satisfy gluing}
In the notation of \eqref{eq tilde f}, fix an element $\widetilde{\Ff} = \Ff \cup_f \ol\Ff \in \Perf(\Sigma_{\Hod}) \times_B \Perf(\ol{\Sigma}_{\Hod})$. In the notation of \eqref{eq tilde g}, consider   
\[
\Gg := \Psi_{\Hod,\rho}(\Ff) , \quad \ol{\Gg} := \Psi_{\ol{\Hod},\rho}(\ol{\Ff}), \quad g = \widehat{\Psi}_{B, \rho}(f). 
\]
We claim that $\Gg \cup_g \ol{\Gg}$ is a well-defined object of $\Perf(\Deligne_G(\Sigma))$. In other words, we claim there exists a well-defined functor 
\[
\Perf(\Sigma_{\Hod}) \times_B \Perf(\ol{\Sigma}_{\Hod}) \to \Perf(\Deligne_G(\Sigma)),
\]
\[
\Ff \cup_f \ol\Ff \longmapsto \Psi_{\Hod,\rho}(\Ff) \cup_{\widehat{\Psi}_{B, \rho}(f)} \Psi_{\ol{\Hod},\rho}(\ol{\Ff}).
\]
\end{proposition}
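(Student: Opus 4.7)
The plan is to unpack the definition of $\Perf(\Deligne_G(\Sigma))$ as a fiber product and then feed in the Hodge integral functors on each factor, using Lemma~\ref{le commutative diagram} to verify the compatibility on the overlap. Concretely, by the pullback square \eqref{eq sheaves on deligne} an object of $\Perf(\Deligne_G(\Sigma))$ is the same data as a triple $(\Gg, \ol\Gg, \alpha)$ with $\Gg \in \Perf(\Hodge_G(\Sigma))$, $\ol\Gg \in \Perf(\Hodge_G(\ol\Sigma))$ and an isomorphism $\alpha : \Gg_\Rep \xrightarrow{\cong} \ol\Gg_\Rep$ in $\Perf(\Rep_G(\Sigma) \times (\AA^1 - 0))$. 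Our task is therefore to produce such a triple from $\widetilde\Ff = \Ff \cup_f \ol\Ff$ in a functorial way.

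First, I would set $\Gg := \Psi_{\Hod,\rho}(\Ff)$ and $\ol\Gg := \Psi_{\ol\Hod,\rho}(\ol\Ff)$; these are perfect by the categorical properness discussed at the start of Section~\ref{se integral functors}. To construct $\alpha$, I would apply Lemma~\ref{le commutative diagram} both over $\Sigma$ and over $\ol\Sigma$. Chasing $\Ff$ around the outside of \eqref{eq integral functors commute} produces a natural isomorphism
\begin{equation*}
\Gg_\Rep \;=\; (RH \times \id)^{*} \Triv_{*}\!\left( \Psi_{\Hod,\rho}(\Ff)|_{\AA^1 - 0} \right) \;\cong\; \widehat\Psi_{B,\rho}\!\left( (\nu_{RH}\times \id)_{*} \Triv_{\Sigma,*}(\Ff|_{\AA^1 - 0}) \right) \;=\; \widehat\Psi_{B,\rho}(\Ff_B),
\end{equation*}
and similarly $\ol\Gg_\Rep \cong \widehat\Psi_{B,\rho}(\ol\Ff_B)$. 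Since $f : \Ff_B \xrightarrow{\cong} \ol\Ff_B$ is an isomorphism and $\widehat\Psi_{B,\rho}$ is a functor, $\widehat\Psi_{B,\rho}(f)$ is an isomorphism $\widehat\Psi_{B,\rho}(\Ff_B) \xrightarrow{\cong} \widehat\Psi_{B,\rho}(\ol\Ff_B)$. Conjugating by the two identifications above yields the desired isomorphism $\alpha : \Gg_\Rep \xrightarrow{\cong} \ol\Gg_\Rep$, and I would denote this $\alpha = \widehat\Psi_{B,\rho}(f)$ by a mild abuse of notation, so that $\Gg \cup_\alpha \ol\Gg = \Psi_{\Hod,\rho}(\Ff) \cup_{\widehat\Psi_{B,\rho}(f)} \Psi_{\ol\Hod,\rho}(\ol\Ff)$ is a well-defined object of $\Perf(\Deligne_G(\Sigma))$.

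Finally, I would verify functoriality. A morphism in $\Perf(\Sigma_\Hod) \times_B \Perf(\ol\Sigma_\Hod)$ is a pair $(\varphi, \ol\varphi)$ fitting into a commutative square with the transition isomorphisms $f$ and $f'$. Applying the triple of functors $(\Psi_{\Hod,\rho}, \Psi_{\ol\Hod,\rho}, \widehat\Psi_{B,\rho})$ to this square and invoking once more the natural isomorphisms from Lemma~\ref{le commutative diagram} produces a commutative square relating the glued objects, hence a morphism in $\Perf(\Deligne_G(\Sigma))$. Composition is preserved because each component functor is a functor and the isomorphisms of Lemma~\ref{le commutative diagram} are natural.

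The main obstacle is the naturality of the identifications $\Gg_\Rep \cong \widehat\Psi_{B,\rho}(\Ff_B)$, which must be established at the level of $\infty$-categories so that $\alpha$ is defined coherently and functoriality holds without needing to track higher homotopies by hand. This is exactly what Lemma~\ref{le commutative diagram} delivers (modulo its own proof in Lemmas~\ref{le commutivity of lower diagram} and \ref{le commutivity of upper diagram}), so once that lemma is in place the present proposition reduces to the essentially formal diagram chase described above.
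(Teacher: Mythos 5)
Your proposal is correct and follows essentially the same route as the paper: both arguments reduce the claim to identifying $\Gg_{\Rep} \cong \widehat{\Psi}_{B,\rho}(\Ff_B)$ and $\ol{\Gg}_{\Rep} \cong \widehat{\Psi}_{B,\rho}(\ol{\Ff}_B)$ via Lemma~\ref{le commutative diagram} applied over $\Sigma$ and $\ol{\Sigma}$, and then transport $f$ through the functor $\widehat{\Psi}_{B,\rho}$ to obtain the gluing isomorphism. Your additional remarks on functoriality in morphisms and on the coherence of the identifications are a reasonable elaboration of what the paper leaves implicit.
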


\begin{proof}
It suffices to check that $g$ is an isomorphism $\Gg_{\Rep} \cong \ol{\Gg}_{\Rep}$. First we compute $\Gg_{\Rep}$ and $\ol{\Gg}_{\Rep}$. Lemma \ref{le commutative diagram}, applied over $\Sigma$, provides an isomorphism
\begin{align*}
\Gg_{\Rep} & := (RH \times \id)^{*} \circ \Triv_{*}(\Gg|_{\AA^1 - 
 0})
 \\
& \cong \widehat{\Psi}_{B, \rho} \left( (\nu_{RH} \times \id)_{*} \circ \Triv_{\Sigma,*} (\Ff|_{\AA^1 - 0}) \right)  = \widehat{\Psi}_{B, \rho}(\Ff_B).
\end{align*}
The same calculation over $\ol{\Sigma}$ provides
\begin{align*}
\ol{\Gg}_{\Rep} & := (\ol{RH} \times \id)^{*} \circ \ol{\Triv}_{*}( \ol\Gg|_{\AA^1 - 0} ) \\
& \cong \widehat{\Psi}_{\ol{B}, \rho} \left( (\ol{\nu}_{RH} \times \id)_{*} \circ \Triv_{\ol{\Sigma},*} (\ol{\Ff}|_{\AA^1 - 0})\right) = \widehat{\Psi}_{\ol{B}, \rho}(\ol{\Ff}_B).
\end{align*}
Therefore $\widehat{\Psi}_{B, \rho} = \widehat{\Psi}_{\ol{B}, \rho}$ evaluated on $f: \Ff_B \xrightarrow{\cong} \ol\Ff_B$ provides the sought-after isomorphism:
\[
g := \widehat{\Psi}_{B, \rho}(f) : \Gg_{\Rep} \xrightarrow{\cong} \ol\Gg_{\Rep}. \qedhere
\]
\end{proof}


\begin{definition}
\label{de deligne functor}
Let $\Sigma$ be an analytic curve and let $\rho : G \to GL_n$ be a representation. The corresponding \textit{Deligne functor} is defined to be
\[
\Psi_{\Del, \rho} : \Perf(\Sigma_{\Hod}) \times_B \Perf(\ol{\Sigma}_{\Hod}) \to \Perf(\Deligne_G(\Sigma)),
\]
\[
\Ff \cup_f \ol\Ff 
\, \, \longmapsto \, \, 
\Psi_{\Hod,\rho}(\Ff) 
\cup_{\widehat{\Psi}_{B,\rho}(f)} 
\Psi_{\ol{\Hod},\rho}(\ol\Ff),
\]
which is well-defined by Proposition \ref{pr satisfy gluing}.
\end{definition}

\subsection{Compatibilities}

This section is dedicated to the proof of Lemma \ref{le commutative diagram} and thus completes our construction of the Deligne functor. This lemma consists of establishing natural commutative diagrams consisting of several morphisms at play in non-abelian Hodge theory. Starting with the lower square of the diagram, we show that the Riemann-Hilbert correspondence $RH : \Rep_G(\Sigma) \to \Loc_G(\Sigma)$ and the map $\nu_{RH} : \Sigma_{\dR} \to \Sigma_{B}$ are compatible with the Betti and de Rham integral functors. 

\begin{lemma} \label{le commutivity of lower diagram}
The Riemann-Hilbert correspondence induces a commutative diagram 
\begin{equation*}
\begin{tikzcd}[column sep = huge]
\Perf( \Sigma_{\dR} ) 
\arrow[r, "\Psi_{\dR, \rho}"] 
\arrow[d, "\nu_{RH,*}"']  & \Perf( \Loc_G(\Sigma) ) \arrow[d, "RH^{*}"] \\
\Perf( \Sigma_{B} ) \arrow[r, "\Psi_{B,\rho}"'] & \Perf( \Rep_G(\Sigma) ) 
\end{tikzcd}.
\end{equation*}
\end{lemma}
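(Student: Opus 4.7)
The plan is to deduce the commutativity from two base change isomorphisms together with one application of the projection formula. The key input is a compatibility between the universal families on $\Sigma_{\dR} \times \Loc_G(\Sigma)$ and $\Sigma_B \times \Rep_G(\Sigma)$, forced by the defining identity $RH = \nu_{RH}^{*}$.

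\textbf{Step 1 (Compatibility of universal families).} The equality $RH = \nu_{RH}^{*}$ means that $RH$ sends a map $\rho \colon \Sigma_B \to BG$ to the composite $\rho \circ \nu_{RH} \colon \Sigma_{\dR} \to BG$. Unwinding the universal property of the analytic mapping stacks translates this into the commutative square
\[
\begin{tikzcd}
\Sigma_{\dR} \times \Rep_G(\Sigma) \arrow[r, "\id \times RH"] \arrow[d, "\nu_{RH} \times \id"'] & \Sigma_{\dR} \times \Loc_G(\Sigma) \arrow[d, "\ev_{\dR}"] \\
\Sigma_B \times \Rep_G(\Sigma) \arrow[r, "\ev_B"'] & BG
\end{tikzcd}.
\]
Pulling back the tautological $G$-bundle on $BG$ and applying $\rho$ yields $(\id \times RH)^{*} \rho(\Vv_{\dR}^{an}) \simeq (\nu_{RH} \times \id)^{*} \rho(\Vv_B^{an})$.

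\textbf{Step 2 (Assembly).} Since $RH$ is an equivalence, projecting the square in Step 1 to the second factor produces a Cartesian square
\[
\begin{tikzcd}
\Sigma_{\dR} \times \Rep_G(\Sigma) \arrow[r, "\id \times RH"] \arrow[d, "\mathrm{pr}_2"'] & \Sigma_{\dR} \times \Loc_G(\Sigma) \arrow[d, "f_2^{\dR}"] \\
\Rep_G(\Sigma) \arrow[r, "RH"'] & \Loc_G(\Sigma)
\end{tikzcd},
\]
and base change gives $RH^{*} \circ Rf^{\dR}_{2,*} \simeq R\mathrm{pr}_{2,*} \circ L(\id \times RH)^{*}$. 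Applying this to the kernel defining $\Psi_{\dR,\rho}$, combined with Step 1 and with the identity $f_1^{\dR} \circ (\id \times RH) = \mathrm{pr}_1$, I obtain
\[
RH^{*} \Psi_{\dR,\rho}(\Ee) \simeq R\mathrm{pr}_{2,*}\bigl( (\nu_{RH} \times \id)^{*} \rho(\Vv_B^{an}) \otimes^{L} L\mathrm{pr}_1^{*} \Ee \bigr).
\]
Next I factor $\mathrm{pr}_2 = f_2^{B} \circ (\nu_{RH} \times \id)$, apply the projection formula for $\nu_{RH} \times \id$ to pull $\rho(\Vv_B^{an})$ outside the direct image, and invoke base change along the Cartesian square
\[
\begin{tikzcd}
\Sigma_{\dR} \times \Rep_G(\Sigma) \arrow[r, "\nu_{RH} \times \id"] \arrow[d, "\mathrm{pr}_1"'] & \Sigma_B \times \Rep_G(\Sigma) \arrow[d, "f_1^{B}"] \\
\Sigma_{\dR} \arrow[r, "\nu_{RH}"'] & \Sigma_B
\end{tikzcd}
\]
to recognise $R(\nu_{RH} \times \id)_{*} L\mathrm{pr}_1^{*} \Ee \simeq Lf_1^{B,*} \nu_{RH,*} \Ee$. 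Unwinding the definition, the resulting object is exactly $\Psi_{B,\rho}(\nu_{RH,*} \Ee)$.

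\textbf{Main obstacle.} The only real subtlety is justifying derived base change and the projection formula for these morphisms of derived analytic stacks. The first application is clean because the relevant square is Cartesian with the horizontal arrow $RH$ an equivalence. The second application, along $\nu_{RH}$, relies on the categorical properness of the analytic Simpson shapes invoked earlier in this section from \cite{porta&sala}. Once these hypotheses are verified, what remains is the diagram chase above, which only unwinds the definitions of the integral functors.
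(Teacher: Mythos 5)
Your proof is correct and takes essentially the same route as the paper's: the same two base-change squares (along $\id \times RH$ over $RH$, and along $\nu_{RH} \times \id$ over $\nu_{RH}$), the same projection formula step, and the same compatibility of universal families $(\id \times RH)^{*}\Vv_{\dR} \cong (\nu_{RH}\times\id)^{*}\Vv_{B}$, which the paper asserts and you helpfully derive from the evaluation maps. The only difference is cosmetic: you compute from $RH^{*}\Psi_{\dR,\rho}(\Ee)$ toward $\Psi_{B,\rho}(\nu_{RH,*}\Ee)$ rather than the reverse.
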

\begin{proof}
Consider the following commutative diagram with Cartesian squares, where each vertical and diagonal arrow is a natural projection.
\begin{equation*}
\begin{tikzcd} 
\Sigma_B \arrow[dr, phantom, "\square"] & \Sigma_{\dR} \arrow[l, "\nu_{RH}"'] & \\
\Sigma_B \times \Rep_G(\Sigma) \arrow[u, "b_1"] \arrow[dr, "b_2"']
& \Sigma_{\dR} \times \Rep_G(\Sigma) \arrow[l, "\nu_{RH} \times \id"] \arrow[u, "a_1"] \arrow[d, "a_2"'] \arrow[r, "\id \times RH"'] \arrow[dr, phantom, "\square"] 
& \Sigma_{\dR} \times \Loc_G(\Sigma) \arrow[ul, "d_1"'] \arrow[d, "d_2"] \\
& \Rep_G(\Sigma) \arrow[r, "RH"']
& \Loc_G(\Sigma)
\end{tikzcd}. 
\end{equation*}
We apply the base change equivalences $b_1^{*} \nu_{RH,*} \simeq (\nu_{RH} \times \id)_{*} a_1^{*}$ and $Ra_{2,*} (\id \times RH)^{*} \simeq RH^{*} Rd_{2,*}$ and the universal isomorphism $(\nu_{RH} \times \id)^{*} \Vv_B \cong (\id \times RH)^{*} \Vv_{\dR}$, alongside the projection formula and various functorialities. With a fixed object $\Ee \in \Perf(\Sigma_{\dR})$, we have 
\begin{align*}
\Psi_{B, \rho}(\nu_{RH, *} \Ee) 
& \cong Rb_{2,*} \Big( \Vv_B \otimes b_1^{*} \nu_{RH, *} \Ee \Big) \\
& \cong Rb_{2,*} \Big( \Vv_B \otimes (\nu_{RH} \times \id)_{ *} a_1^{*} \Ee \Big)
\\
& \cong Rb_{2,*} (\nu_{RH} \times \id)_{ *} \Big( (\nu_{RH} \times \id)^{*} \Vv_B \otimes a_1^{*} \Ee \Big) \\
& \cong Ra_{2,*} \Big( (\id \times RH)^{*} \Vv_{\dR} \otimes a_1^{*} \Ee \Big) \\
& \cong Ra_{2,*} \Big( (\id \times RH)^{*} \Vv_{\dR} \otimes (\id \times RH)^{*} d_1^{*} \Ee \Big) \\
& \cong Ra_{2,*} (\id \times RH)^{*} \Big( \Vv_{\dR} \otimes d_1^{*} \Ee \Big) \\
& \cong RH^{*} Rd_{2,*}\Big( \Vv_{\dR} \otimes d_1^{*} \Ee \Big). \qedhere
\end{align*}
\end{proof}

Next we show that the equivalences 
\[
\Triv_{\Sigma}: \Sigma_{\Hod}|_{\AA^1 - 0} \to \Sigma_{\dR} \times (\AA^1 - 0), \quad \Triv : \Hodge_G(\Sigma)|_{\AA^1 - 0} \to \Loc_G(\Sigma) \times (\AA^1 - 0),
\]
give rise to a compatibility between the Hodge and de Rham integral functors. 

\begin{lemma}  \label{le commutivity of upper diagram}
\label{le hodge and de rham functors}
The trivialisations $\Triv$ and $\Triv_{\Sigma}$ induce a commutative diagram
\begin{equation*}
\begin{tikzcd}[column sep = huge]
\Perf( \Sigma_{\Hod}|_{\AA^1 - 0} ) 
\arrow[r, "\Psi_{\Hod,\rho}|_{\AA^1 - 0}"]
\arrow[d, "\Triv_{\Sigma, *}" ]
& \Perf ( \Hodge_G(\Sigma)|_{\AA^1 - 0} ) \arrow[d, "\Triv_{*}" ]\\
\Perf ( \Sigma_{\dR} \times (\AA^1 - 0) ) \arrow[r, "\widehat \Psi_{\dR, \rho}"] 
& \Perf( \Loc_G(\Sigma) \times (\AA^1 - 0) ) .
\end{tikzcd}.
\end{equation*}
\end{lemma}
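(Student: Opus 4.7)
The plan is to reproduce the template used in the proof of Lemma \ref{le commutivity of lower diagram}, with the Riemann-Hilbert morphisms $RH$ and $\nu_{RH}$ replaced by the trivialisation equivalences $\Triv$ and $\Triv_{\Sigma}$. The proof rests on three ingredients: a commutative diagram of projections with Cartesian faces, a universal compatibility of kernels, and a computation using base change and the projection formula.

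\textbf{Step 1 (Projection diagram).} I would first set up the relevant commutative diagram with Cartesian squares. Writing $W := \Sigma_{\Hod}|_{\AA^1-0} \times_{\AA^1-0} \Hodge_G(\Sigma)|_{\AA^1-0}$ and $W' := (\Sigma_{\dR} \times (\AA^1-0)) \times_{\AA^1-0} (\Loc_G(\Sigma) \times (\AA^1-0))$, the map $\Triv_{\Sigma} \times \Triv : W \to W'$ is an equivalence of analytic stacks, and it fits into a commutative cube whose vertical faces are Cartesian, analogous to the one appearing in the previous lemma. Let $b_1, b_2$ denote the projections from $W$ and $c_1, c_2$ the projections from $W'$.

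\textbf{Step 2 (Universal family compatibility).} The central non-formal ingredient is the isomorphism
\begin{equation*}
(\Triv_{\Sigma} \times \Triv)^* \widehat{\Vv}_{\dR} \cong \Vv_{\Hod}|_{\AA^1-0},
\end{equation*}
where $\widehat{\Vv}_{\dR}$ denotes the universal de Rham family extended relatively over $\AA^1-0$. On $\CC$-points this is the tautology that a $\lambda$-connection $(P, \nabla^{\lambda})$ corresponds under $\Triv$ to the triple $(P, \lambda^{-1}\nabla^{\lambda}, \lambda)$, but we need it at the universal, stacky level. This follows from the functoriality of $\Triv$ and $\Triv_{\Sigma}$, which are themselves defined by the natural $\GG_m$-rescaling action on universal $\lambda$-connections (cf.\ \eqref{eq triv rescaling}).

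\textbf{Step 3 (Base change and projection formula).} With the kernel compatibility in hand, for $\Ee \in \Perf(\Sigma_{\Hod}|_{\AA^1-0})$ I would compute
\begin{align*}
\Triv_{*} \, \Psi_{\Hod, \rho}|_{\AA^1-0}(\Ee)
&\cong \Triv_{*} \, Rb_{2,*}\bigl( \rho(\Vv_{\Hod}|_{\AA^1-0}) \otimes b_1^{*} \Ee \bigr) \\
&\cong Rc_{2,*} (\Triv_{\Sigma} \times \Triv)_{*}\bigl( (\Triv_{\Sigma} \times \Triv)^{*} \rho(\widehat{\Vv}_{\dR}) \otimes b_1^{*} \Ee \bigr) \\
&\cong Rc_{2,*} \bigl( \rho(\widehat{\Vv}_{\dR}) \otimes c_1^{*} \Triv_{\Sigma, *} \Ee \bigr) \\
&= \widehat{\Psi}_{\dR, \rho}(\Triv_{\Sigma, *} \Ee),
\end{align*}
where the successive isomorphisms use Step 2, the projection formula applied along the equivalence $\Triv_{\Sigma} \times \Triv$, and base change along the Cartesian square of projections identified in Step 1.

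\textbf{Main obstacle.} The main technical point is the kernel compatibility asserted in Step 2; once this is in place, the remainder of the argument is formal manipulation with base change and the projection formula, just as in Lemma \ref{le commutivity of lower diagram}. Although intuitively transparent via the $\GG_m$-rescaling of $\lambda$-connections, making it precise at the universal level amounts to unwinding the stacky definitions of $\Hodge_G(\Sigma) = \anMaps(\Sigma_{\Hod}/\AA^1, BG)$ and of $\Sigma_{\Hod}$ itself, and tracking the rescaling equivalence \eqref{eq triv rescaling} through Porta's analytic mapping-stack formalism.
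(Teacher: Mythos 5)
Your proposal is correct and follows essentially the same route as the paper's proof: the same diagram of projections intertwined by $\Triv_{\Sigma}\times\Triv$, the same universal kernel isomorphism $\Vv_{\Hod}|_{\AA^1-0}\cong(\Triv_{\Sigma}\times\Triv)^{*}\widehat{\Vv}_{\dR}$, and the same base-change/projection-formula computation. The only difference is cosmetic: you explicitly flag the universal kernel compatibility as the non-formal step, whereas the paper simply asserts it.
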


\begin{proof}
Consider the following Cartesian diagrams, where the horizontal arrows are natural projections. 
\begin{equation*}
\begin{tikzcd}[column sep = huge]
\Sigma_{\dR} \times (\AA^1 - 0) 
\arrow[dr, phantom, "\square"]
& 
\Sigma_{\dR} \times (\AA^1 - 0) \times \Loc_G(\Sigma) \times (\AA^1 - 0)
\arrow[l, "\pi_{12}"'] 
\arrow[r, "\pi_{34}"] 
\arrow[dr, phantom, "\square"]
& \Loc_G(\Sigma) \times (\AA^1 - 0)
\\
\Sigma_{\Hod}|_{\AA^1 - 0} \arrow[u, "\Triv_{\Sigma}"]
& 
\Sigma_{\Hod}|_{\AA^1 - 0} \times \Hodge_G(\Sigma)|_{\AA^1 - 0} \arrow[l, "h_1"] \arrow[r, "h_2"'] \arrow[u, "\Triv_{\Sigma} \times \Triv"]
& \Hodge_{G}(\Sigma) |_{\AA^1 - 0} \arrow[u, "\Triv"']
\end{tikzcd}.
\end{equation*}
We apply a universal isomorphism $\Vv_{\Hod} |_{\AA^1 - 0} \cong (\Triv_\Sigma \times \Triv)^{*} \widehat{\Vv}_{\dR}$ and a base change equivalence $(\Triv_{\Sigma} \times \Triv)_{*} h_1^{*} \simeq \pi_{12}^{*} \Triv_{\Sigma, *}$,  alongside the projection formula and various functorialities. With a fixed $\Ff \in \Perf( \Sigma_{\Hod}|_{\AA^1 - 0})$, we have the following isomorphisms: 
\begin{align*}
\Triv_{*} \circ \Psi_{\Hod,\rho}|_{\AA^1 - 0}(\Ff |_{\AA^1 - 0} ) 
& \cong \Triv_{*} \circ Rh_{2,*} \Big( \rho(\Vv_{\Hod}|_{\AA^1 - 0}) \otimes h_1^{*} \Ff \Big) \\
& \cong \pi_{34,*} ( \Triv_{\Sigma} \times \Triv )_{*} 
\Big( ( \Triv_{\Sigma} \times \Triv )^{*} \rho(\widehat{\Vv}_{\dR}) \otimes h_1^{*} \Ff \Big) \\
& \cong \pi_{34,*} 
\Big( \rho(\widehat{\Vv}_{\dR}) \otimes ( \Triv_{\Sigma} \times \Triv )_{*} h_1^{*} \Ff \Big) \\
& \cong \pi_{34,*} 
\Big( \rho(\widehat{\Vv}_{\dR}) \otimes \pi_{12}^{*} \Triv_{\Sigma,*} \Ff \Big) \\
& = \widehat{\Psi}_{\dR, \rho}( \Triv_{\Sigma,*} \Ff ). \qedhere
\end{align*}
\end{proof}

\section{Categories of (BBB)-branes}
\label{se BBB on Higgs}

Kapustin and Witten \cite{kapustin&witten} define (a non-exhaustive class of) (BBB)-branes to be hyperholomorphic submanifolds of $\Mm_{\Dol}(X,G)$ that support a hyperholomorphic bundle. Following a suggestion of Gaiotto \cite[Appendix C]{gaiotto}, this section proposes a categorification of (BBB)-branes in terms of perfect complexes on the Deligne-Hitchin twistor space\footnote{We thank Anna Sisak for recommending this point of view to the authors.}. Using the Deligne stack constructed in Definition \ref{de deligne stack}, we provide a natural generalisation of the categorified (BBB)-branes to the setting of derived algebraic and analytic stacks.  

\subsection{(BBB)-branes and twistor space}
\label{se twistoral BBB}

On a hyperk\"ahler manifold $(M, g, J_1, J_2, J_3)$, a \textit{hyperholomorphic bundle} $(E,\nabla)$ is a Hermitian and holomorphic bundle (i.e. curvature $F_{\nabla}$ is of type $(1, 1)$) with respect to all three complex structures. After removing the Hermitian condition, $(E,\nabla)$ is called an \textit{autodual bundle}. The twistor transform of Kaledin and Verbitsky \cite[§5]{kaledin&verbitsky} assigns a holomorphic bundle on $\Tw(M)$ to an autodual bundle on $M$, as we now describe. A twistor space $\Tw(M)$ always has analytic surjections $\tau : \Tw(M) \to \PP^1$ and $p: \Tw(M) \to M$ that commute with an analytic isomorphism $\Tw(M) \cong M \times \PP^1$. A twistor line is a section $\sigma: \PP^1 \to \Tw(M)$, which is called \textit{horizontal} if the composition $\PP^1 \xrightarrow{\sigma} \Tw(M) \xrightarrow{\cong} M \times \PP^1$ defines an isomorphism $\PP^1 \cong \{m\} \times M$, for some $m \in M$. The twistor transform of Kaledin-Verbitsky is an equivalence of categories 
\begin{equation}
\label{eq twistor transform}
\left\{ 
\begin{array}{l}
\text{autodual bundles}
\\
\qquad \text{on $M$} 
\end{array}
\right\}
\xrightarrow{\cong }
\left\{ 
\begin{array}{l}
\text{holomorphic bundles on $\Tw(M)$ that} \\
\text{are trivial on horizontal twistor lines}
\end{array}
\right\},
\end{equation}
\[
(B,\nabla) \longmapsto (p^{*}B , p^{*}\nabla^{0,1}),
\]
where $p^{*}\nabla^{0,1}$ is the Dolbeault operator $\ol\partial_{p^{*}B}$ for the pullback bundle. With $p^{*}B$ at hand, one retrieves $B$ by pullback along an embedding $\imath : M \hookrightarrow \Tw(M)$, for then one has the functorial isomorphism 
\begin{equation}
\label{eq inclusion pullback}
\iota^{*} (p^{*}B) = (p \circ \iota)^{*}B \cong B.
\end{equation}
On the hyperk\"ahler moduli space $M = \Mm_{\Dol}(\Sigma, G)$, we have Deligne and Simpson's explicit construction of the twistor space (Definition \ref{de twistor space}) in terms of $\lambda$-connections. One can therefore realise the projection $\tau : \Tw(\Mm_{\Dol}(\Sigma, G)) \to \PP^1$ as the map that records the $\lambda$-value of a $\lambda$-connection. The fiber over $\lambda = 0$ is then $\imath_{\Dol} : \Mm_{\Dol}(\Sigma, G) \hookrightarrow \Tw(\Mm_{\Dol}(\Sigma, G))$. 

Since the hyperk\"ahler structure on $\Mm_{\Dol}(\Sigma, G)$ arises from non-abelian Hodge theory, the horizontal twistor lines on $\Tw(\Mm_{\Dol}(\Sigma, G))$ have an explicit description. They are precisely the sections defined by contracting the non-abelian Hodge map \eqref{eq NAH} applied to a $G$-Higgs bundle $(P,\theta) \to \Sigma$, 
\begin{equation}
\label{eq horiz}
\sigma_{(P,\theta)} := \NAH( (P , \theta) \, , \, \cdot \, ) : \PP^1 \to \Tw(\Mm_{\Dol}(\Sigma, G)).
\end{equation}
With the twistor geometry in place, we now analyse Gaiotto's suggestion for a category of (BBB)-branes \cite[Appendix C]{gaiotto}. The original (BBB)-branes of Kapustin and Witten \cite{kapustin&witten} are hyperholomorphic submanifolds of $\Mm_{\Dol}(\Sigma,G)$ that support a hyperholomorphic bundle. Acting by the twistor transform, (BBB)-branes correspond to locally free sheaves on $\Tw(\Mm_{\Dol}(\Sigma,G))$ that are trivial on horizontal twistor lines. Gaiotto indicates that one should consider more general (BBB)-branes defined by perfect\footnote{Gaiotto's motivation for considering \textit{perfect} complexes comes from the behaviour of supersymmetric ground states and their excitations. More directly, this could be justified by noting that $\Perf(\cdot) \subset \QC(\cdot)$ is a well-behaved dg-subcategory.} complexes on $\Tw(\Mm_{\Dol}(\Sigma,G))$. We therefore consider 
\[
\Perf^{\, =}(\Tw(\Mm_{\Dol}(\Sigma, G))) \subset \Perf(\Tw(\Mm_{\Dol}(\Sigma, G))),
\]
the full subcategory of complexes that are trivial on horizontal twistor lines. We say that a complex $\TT$ is trivial on a horizontal twistor line $\sigma : \PP^1 \to \Tw(\Mm_{\Dol}(\Sigma,G)$ if $\sigma^{*}\TT$ is quasi-isomorphic to a complex of free sheaves. 

Additionally, should one wish to consider (BBB)-brane data that includes an algebraic sheaf, one can consider the natural embedding $\lambda_{\Dol} : \Mm_{\Dol}(\Sigma, G) \hookrightarrow \Mm_{\Dol}(X, G)$ and the corresponding analytification functor 
\[
( \cdot )^{an} := \lambda_{\Dol}^{*} : \Perf(  \Mm_{\Dol}(X , G) ) \to \Perf( \Mm_{\Dol}(\Sigma, G)  ). 
\]
We therefore propose a categorification of (BBB)-branes defined by the Cartesian square
\begin{equation}
\label{eq bbb category}
\begin{tikzcd}
\BBB( \Mm_{\Dol}(X, G) )  
\arrow[dr, phantom, "\square"] 
\arrow[r]
\arrow[d]
& \Perf^{\, =}(\Tw(\Mm_{\Dol}(\Sigma, G))) \arrow[d, "L\imath_{\Dol}^{*}"] \\
\Perf(  \Mm_{\Dol}(X , G) ) \arrow[r, "(\cdot)^{an}"'] & \Perf( \Mm_{\Dol}(\Sigma, G)  )
\end{tikzcd}.
\end{equation}
Objects are pairs $(\BB, \TT)$ consisting of $\BB \in \Perf(  \Mm_{\Dol}(X , G) )$ and $\TT \in \Perf^{\, =}(\Tw(\Mm_{\Dol}(\Sigma, G)))$ such that $\BB^{an} \cong \imath_{\Dol}^{*}\TT$ and morphisms are 
\[
\Hom\big((\BB_1, \TT_1), (\BB_2, \TT_2)\big) = \big\{ (b,t) \in \Hom(\BB_1, \BB_2) \times \Hom(\TT_1, \TT_2) 
\text{ such that $b^{an} \simeq L\imath_{\Dol}^{*} t$ } \big\}.
\]
Heuristically, $\TT $ encodes the hyperholomorphic geometry of $\BB$ via hyperk\"ahler rotations, achieved by application of non-abelian Hodge theory. By \textit{categorification of (BBB)-branes}, we refer to the following conclusive remark. 

\begin{proposition}
\label{co BBB category}
The (BBB)-branes on $\Mm_{\Dol}(X,G)$ defined by Kapustin-Witten correspond to locally free objects in $\BBB(\Mm_{\Dol}(X,G))$ that are supported in one homological degree.
\end{proposition}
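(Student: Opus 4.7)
The plan is to invoke the twistor transform equivalence \eqref{eq twistor transform} of Kaledin-Verbitsky and to unpack how it interacts with the Cartesian square \eqref{eq bbb category} defining $\BBB(\Mm_{\Dol}(X,G))$.

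In the forward direction, I would start with a Kapustin-Witten (BBB)-brane, consisting of a hyperk\"ahler submanifold $\iota_S : S \hookrightarrow \Mm_{\Dol}(\Sigma,G)$ and a hyperholomorphic bundle $B \to S$. Regarding $S$ as the analytification of an algebraic subvariety of $\Mm_{\Dol}(X,G)$ and $B$ as holomorphic in the Dolbeault complex structure, I would set $\BB := \iota_{S,*}B \in \Perf(\Mm_{\Dol}(X,G))$, which is locally free on $S$ and concentrated in one homological degree. On the twistor side, I would set $\TT := \iota_{\Tw(S), *}(p^{*}B) \in \Perf(\Tw(\Mm_{\Dol}(\Sigma,G)))$, where $p^{*}B$ is the twistor transform of $B$. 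To verify $\TT \in \Perf^{\,=}$, observe that each horizontal twistor line $\sigma_{(P,\theta)}$ either lies inside $\Tw(S)$ (when $(P,\theta) \in S$), in which case $\sigma^{*}\TT$ is free by \eqref{eq twistor transform}, or is disjoint from $\Tw(S)$, in which case $\sigma^{*}\TT \simeq 0$. The compatibility $\BB^{an} \cong L\imath_{\Dol}^{*}\TT$ is then a proper base change reducing to the tautology \eqref{eq inclusion pullback} at $\lambda = 0$.

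For the converse, starting with a locally free pair $(\BB,\TT) \in \BBB(\Mm_{\Dol}(X,G))$ concentrated in one homological degree, I would set $S := \supp(\BB^{an})$ and apply the inverse twistor transform to $\TT|_{\Tw(S)}$ to recover an autodual bundle on $S$. The Hermitian enhancement that upgrades autodual to hyperholomorphic arises from the antipodal real structure on $\Tw(\Mm_{\Dol}(\Sigma,G))$, visible through the gluing of complex-conjugate Hodge moduli in \eqref{eq define glu moduli}.

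The main obstacle is the converse direction, specifically showing that $S = \supp(\BB^{an})$ is automatically a hyperk\"ahler submanifold of $\Mm_{\Dol}(\Sigma,G)$ rather than merely an algebraic subvariety of $\Mm_{\Dol}(X,G)$. Since $\BB$ alone only detects the Dolbeault complex structure, hyperk\"ahler-ness of $S$ must emerge from its compatibility with $\TT$: the triviality condition on horizontal twistor lines forces $\supp(\TT)$ to fiber $\PP^1$-equivariantly over $S$ under $\tau$, which via the explicit description \eqref{eq horiz} of horizontal lines through $\NAH$ forces $S$ to be stable under all hyperk\"ahler rotations. This is the mechanism by which the twistor-space extension $\TT$ rigidifies the purely algebraic data $\BB$ into a genuine Kapustin-Witten (BBB)-brane.
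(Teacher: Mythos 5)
The paper offers no proof of this proposition: it is stated as a ``conclusive remark'' immediately after recalling the Kaledin--Verbitsky twistor transform \eqref{eq twistor transform} and the Cartesian square \eqref{eq bbb category}, the correspondence being regarded as a direct translation of the one into the other. Your proposal supplies exactly the argument the paper intends, and its mechanism is sound: push forward the twistor transform along $\Tw(S)\hookrightarrow \Tw(\Mm_{\Dol}(\Sigma,G))$ in the forward direction; use the dichotomy that a horizontal line $\{m\}\times\PP^1$ is either contained in or disjoint from $\Tw(S)=S\times\PP^1$; and in the converse use that a complex of free sheaves on $\PP^1$ has constant derived fibres, so that $\supp(\TT)$ is the horizontal $\PP^1$-saturation of $\supp(\BB^{an})$, forcing $S$ to be analytic in every $J_\lambda$ and hence trianalytic. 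The base-change identification of $L\imath_{\Dol}^{*}\TT$ with $\BB^{an}$ via \eqref{eq inclusion pullback} is also correct, since $\Tw(S)$ meets the fibre over $\lambda=0$ transversally.

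Two points deserve more care than you give them (both are also elided by the paper). First, the pullback in the definition of $\Perf^{\,=}$ is derived, so for a horizontal line $\sigma$ lying inside $\Tw(S)$ the complex $L\sigma^{*}\iota_{\Tw(S),*}(p^{*}B)$ carries Tor terms against $\Lambda^{\bullet}N^{\vee}_{\Tw(S)/\Tw(M)}$; their triviality on $\sigma$ requires that the normal bundle of the hyperk\"ahler submanifold $S$ (which is autodual precisely because $S$ is trianalytic) also be twistor-trivial, not just $B$ itself, and your one-line appeal to \eqref{eq twistor transform} only covers the degree-zero part. Second, \eqref{eq twistor transform} is an equivalence with \emph{autodual} bundles; the Hermitian metric that makes a Kapustin--Witten brane hyperholomorphic is not encoded in an object of $\Perf^{\,=}(\Tw(\Mm_{\Dol}(\Sigma,G)))$, and the antipodal real structure you invoke is an involution of the ambient twistor space, not a datum attached to a given $\TT$. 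As written, your converse therefore recovers an autodual bundle on a trianalytic subvariety; upgrading it to a hyperholomorphic one would require adding equivariance data under the antipodal map, or reading ``hyperholomorphic'' as ``autodual,'' which is evidently the level of precision at which the proposition is intended.
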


A fundamental example of a (BBB)-brane is a \textit{zerobrane}: a $\CC$-valued point $(P,\theta) \in \Mm_{\Dol}(X,G)$ equipped with the length one skyscraper $\Oo_{(P,\theta)}$. In the category of (BBB)-branes, zerobranes correspond to objects $(\BB, \TT) = (\Oo_{(P,\theta)}, \Oo_{\im(\sigma_{(P,\theta)})})$, where $\TT$ is the skyscraper sheaf on the image of the twistor line of $(P,\theta)$.

\begin{remark}
\label{re BBB appeals}
The twistor framework has the following appealing properties: 

\begin{enumerate}
    \item The perfect condition can be removed by performing the same constructions with $\DCoh(\cdot)$.

    \item An object of $\BBB(\Mm_{\Dol}(X,G))$ maps to a standard B-brane on $\Mm_{\Dol}(X,G))$ in every complex structure. For $J_{\lambda}$, this is achieved by restricting $\TT$ to the fiber $\Tw(\Mm_{\Dol}(\Sigma, G)) \times_{\PP^1} \{\lambda\}$ and pulling back along the non-abelian Hodge map 
    \[
    \NAH( \, \cdot \, , \lambda) : \Mm_{\Dol}(X,G)) \to \Tw(\Mm_{\Dol}(\Sigma, G)) \times_{\PP^1} \{\lambda\}.
    \]
    
    \item Cohomology functors $H^k(\cdot) : \DCoh( \Mm_{\Dol}(X , G)) \to \Coh( \Mm_{\Dol}(X , G))$ preserve $\BBB( \Mm_{\Dol}(X , G) )$. This is because an isomorphism $\BB^{an} \cong \imath^{*}_{\Dol}\TT$ induces one between 
    \[
    H^k( \BB)^{an} \cong H^k( \BB^{an} ) \cong H^k( \imath^{*}_{\Dol}\TT ) \cong \imath^{*}_{\Dol}H^k( \TT ) .
    \]

    \item The same construction applies to any hyperk\"ahler manifold $M$ to define a category $\BBB(M)$, although it is not clear if the notion of (BBB)-brane is meaningful for physics or mirror symmetry in the general context.
\end{enumerate}
\end{remark}

\subsection{(BBB)-branes and the Deligne stack}
\label{se bbb}

We propose a notion of (BBB)-brane on $\Higgs_G(X)$ by considering $\Deligne_G(\Sigma)$ in the same role that $\Tw(\Mm_{\Dol}(\Sigma, G))$ plays in the previous section. Our approach suggests “$\Deligne_G(\Sigma) = \Tw(\Higgs_G(\Sigma))$", although we stress there does not yet exist a (shifted/derived) hyperk\"ahler or twistor geometry for stacks. Our strategy uses descent to the well-known twistor theory for the moduli spaces that arise in non-abelian Hodge theory. We view our proposal as an \textit{initial ansatz}, to be refined by a deeper analysis of mirror symmetry, geometric Langlands and future developments in hyperk\"ahler geometry.

Holstein-Porta's GAGA-type theory for derived mapping stacks \cite[Corollary 6.15]{holstein&porta} establishes the analytification
\[
\Higgs_G(X)^{an} \cong \Higgs_G(X^{an}) = \Higgs_G(\Sigma),
\]
so one can consider the exact analytification functor
\begin{equation}
\label{eq analytic functor higgs}
(\cdot)^{an} : \Perf(\Higgs_G(X)) \to \Perf(\Higgs_G(\Sigma)).
\end{equation}
Recall the construction of the Deligne stack comes with a natural morphism $\tau : \Deligne_G(\Sigma) \to \PP^1$ that takes the $\lambda$-value of a $\lambda$-connection (see \eqref{eq define tau}). As for the moduli spaces, we call $\sigma : \PP^1 \to \Deligne_G(\Sigma)$ a \textit{twistor line} if $\sigma$ is a section of $\tau$. To keep track of these sections we introduce the notation\footnote{Sections glue to define stacks $\Sect(\PP^1, \Deligne_G(\Sigma)) \subset \Maps(\PP^1, \Deligne_G(\Sigma))$, similar to those studied by Ginzburg-Rozenblyum \cite{ginzburg&rozenblyum}, hence we borrow their notation.}
\[
\Sect(\PP^1, \Deligne_G(\Sigma)) \subset \Hom(\PP^1, \Deligne_G(\Sigma)).
\]
The horizontal condition on twistor lines is problematic. Non-abelian Hodge theory does not apply to the stacks involved, so the horizontal twistor lines defined in \eqref{eq horiz} no longer exist. We shall make use of a strategy that restricts to stability loci and descends to the moduli spaces. Recall from Proposition \ref{pr deligne moduli} the good moduli space structure
\[
f_{\Del} : t_0(\Deligne_{G}(\Sigma))^{\sst} \to \Tw(\Mm_{\Dol}(\Sigma, G)),
\]
constructed as a gluing of Hodge moduli spaces over $\Sigma$ and $\ol\Sigma$. Every twistor line gives rise to a truncated morphism $t_0(\sigma) : \PP^1 \to t_0(\Deligne_G(\Sigma))$ of classical 1-stacks. If $t_0(\sigma)$ happens to land in the stable locus, we say that $\sigma$ is \textit{horizontal} if the composition 
\begin{equation}
\label{eq sst horizontal}
f_{\Del} \circ t_0(\sigma) : \PP^1 \to \Tw(\Mm_{\Dol}(\Sigma, G)).
\end{equation}
is a horizontal twistor line in the sense of twistor theory (\textit{i.e.} of the form \eqref{eq horiz}). We also consider unstable $\lambda$-connections that become stable upon reduction to a subgroup of $G$. In this way, we propose the following more general notion of horizontal\footnote{We thank Eric Chen for addressing the need for horizontal twistor lines mapping to the unstable locus.}.  
\begin{definition}
\label{de stacky horizontal twistor lines}
A twistor line $\sigma : \PP^1 \to \Deligne_{G}(\Sigma)$ is \textit{horizontal} if there exist a reductive subgroup $G' \hookrightarrow G$ and a map $\sigma' : \PP^1 \to t_0(\Deligne_{G'}(\Sigma))^{\sst}$ such that $t_0(\sigma)$ factors as 
\[
\begin{tikzcd}
& t_0(\Deligne_{G}(\Sigma))  \\
\PP^1 \arrow[ur, "t_0(\sigma)"] 
\arrow[r, "\sigma'"']
& t_0(\Deligne_{G'}(\Sigma))^{\sst} 
\arrow[r, "f_{\Del}"']
\arrow[u, "\xi_{\Del}"']
& \Tw(\Mm_{\Dol}(\Sigma, G'))
\end{tikzcd},
\]
where $f_{\Del} \circ \sigma'$ is a horizontal twistor line on $\Tw(\Mm_{\Dol}(X,G'))$ and $\xi_{\Del}$ is extension of structure group from $G'$ to $G$ (\textit{i.e.} pushforward along $BG' \to BG$).
\end{definition}

We keep track of horizontal twistor lines with the notation $\Sect^{\, =}(\PP^1, \Deligne_{G}(\Sigma))$. Also let $\Perf^{\,=}(\Deligne_G(\Sigma)) \subset \Perf(\Deligne_G(\Sigma))$ denote the full triangulated subcategory consisting of perfect complexes that are trivial on horizontal twistor lines. As for complexes on the twistor space, we say that $\TT$ is trivial on $\sigma : \PP^1 \to \Deligne_G(\Sigma)$ when $\sigma^{*}\TT$ is quasi-isomorphic to a complex of free sheaves. 

The following result is a simple test for when $\TT \in \Perf(\Deligne_G(\Sigma))$ is horizontal in terms of descent. 

\begin{lemma}
\label{lm compatibility of trivializations}
Consider $\TT \in \Perf(\Deligne_G(\Sigma))$ such that for every subgroup $G' \hookrightarrow G$, there exists $\TT' \in \Perf^{\,=}(\Tw(\Mm_{\Dol}(\Sigma, G'))$ and an isomorphism 
\[
\xi_{\Del}^{*}\TT \cong f_{\Del}^*\TT'.
\]
Under these conditions, $\TT$ is an object of $\Perf^{\,=}(\Deligne_{G}(\Sigma))$.   
\end{lemma}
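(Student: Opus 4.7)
The plan is to unwind Definition \ref{de stacky horizontal twistor lines} and then directly apply the hypothesis: the compatibility isomorphism $\xi_{\Del}^{*}\TT \cong f_{\Del}^{*}\TT'$ is engineered to translate the question of horizontal-triviality of $\TT$ into the already-known horizontal-triviality of $\TT'$ on the twistor space of the moduli.

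First I would fix an arbitrary horizontal twistor line $\sigma : \PP^1 \to \Deligne_G(\Sigma)$ and unpack Definition \ref{de stacky horizontal twistor lines} to extract a reductive subgroup $G' \hookrightarrow G$ and a lift $\sigma' : \PP^1 \to t_0(\Deligne_{G'}(\Sigma))^{\sst}$ satisfying $t_0(\sigma) = \xi_{\Del} \circ \sigma'$, together with the property that $f_{\Del} \circ \sigma'$ is a horizontal twistor line on $\Tw(\Mm_{\Dol}(\Sigma, G'))$ in the classical sense of \eqref{eq horiz}. Since $\PP^1$ is already a classical 1-scheme, pullback along $\sigma$ factors through truncation and hence through $t_0(\sigma) = \xi_{\Del} \circ \sigma'$, giving
\[
L\sigma^{*}\TT \simeq L(\sigma')^{*} L\xi_{\Del}^{*} \TT.
\]

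The key step is then to substitute the hypothesised isomorphism $L\xi_{\Del}^{*}\TT \simeq Lf_{\Del}^{*}\TT'$, for some $\TT' \in \Perf^{\, =}(\Tw(\Mm_{\Dol}(\Sigma, G')))$, into the above. Functoriality of pullback for the composition $f_{\Del} \circ \sigma'$ yields
\[
L\sigma^{*}\TT \simeq L(\sigma')^{*} Lf_{\Del}^{*}\TT' \simeq L(f_{\Del} \circ \sigma')^{*}\TT'.
\]
Since $f_{\Del} \circ \sigma'$ is a horizontal twistor line on $\Tw(\Mm_{\Dol}(\Sigma, G'))$ and $\TT'$ is trivial on horizontal twistor lines by assumption, $L(f_{\Del} \circ \sigma')^{*}\TT'$ is quasi-isomorphic to a complex of free sheaves on $\PP^1$. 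Transporting this across the chain of quasi-isomorphisms shows the same for $L\sigma^{*}\TT$, and as $\sigma$ was arbitrary we conclude $\TT \in \Perf^{\, =}(\Deligne_G(\Sigma))$.

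There is no substantive obstacle: the only point requiring care is checking that the two pullback steps compose correctly in the derived sense, which amounts to recalling that (i) $\PP^1$ being classical forces $\sigma$ to factor through $t_0(\Deligne_G(\Sigma))$, so the natural inclusion $t_0(\Deligne_G(\Sigma)) \hookrightarrow \Deligne_G(\Sigma)$ is harmless, and (ii) pullback functors on $\Perf$ satisfy $L(g \circ h)^{*} \simeq Lh^{*} \circ Lg^{*}$ up to coherent natural isomorphism. Both are standard in the formalism of derived (analytic) stacks used throughout the paper, so the argument is essentially a diagram chase made possible precisely by the horizontal-triviality hypothesis on $\TT'$ and the structural role of the good moduli map $f_{\Del}$ from Proposition \ref{pr deligne moduli}.
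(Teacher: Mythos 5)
Your proposal is correct and follows essentially the same route as the paper's own proof: factor $\sigma$ through its truncation, use $t_0(\sigma) = \xi_{\Del} \circ \sigma'$ from Definition \ref{de stacky horizontal twistor lines}, substitute the hypothesis $\xi_{\Del}^{*}\TT \cong f_{\Del}^{*}\TT'$, and conclude from horizontality of $f_{\Del}\circ\sigma'$ and triviality of $\TT'$ on such lines. The only difference is that you make the derived/classical bookkeeping (factorisation through $t_0$ and composability of pullbacks) explicit, which the paper leaves implicit.
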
 

\begin{proof}
A horizontal twistor line $\sigma \in \Sect^{\, =} (\PP^1 , \Deligne_G(\Sigma))$ factors through the classical truncation $t_0(\sigma) : \PP^1 \to t_0(\Deligne_G(\Sigma))$ and the embedding $t_0(\Deligne_G(\Sigma)) \to \Deligne_G(\Sigma)$. By the definition of horizontal, there exists a $\sigma'$ such that $t_0(\sigma) = \xi_{\Del} \circ \sigma'$. By functoriality and the hypothesis of the lemma, one has 
\[
\sigma^{*}\TT \cong t_0(\sigma)^*t_0(\TT) \cong (\sigma')^* \xi_{\Del}^* \TT \cong (\sigma')^* f_{\Del}^* \TT' \cong (f_{\Del} \circ \sigma')^*\TT'.
\]
$f_{\Del} \circ \sigma'$ is horizontal on $\Tw(\Mm_{\Dol}(\Sigma, G'))$, so therefore $\TT'$ trivialises on $f_{\Del} \circ \sigma'$. It follows that $\TT$ trivialises on $\sigma$. 
\end{proof}


Having established which twistor lines are horizontal, we propose a corresponding notion of (BBB)-brane on $\Higgs_G(X)$. 

\begin{definition}
\label{de stacky BBB}
\label{de stacky BBB complex}
%
%
Let $X$ be a smooth projective curve with analytification $\Sigma = X^{an}$. Let us define the category $\BBB( \Higgs_G(X) )$ via the Cartesian square
\begin{equation}
\label{eq bbb category for stacks}
\begin{tikzcd}
\BBB( \Higgs_G(X) )  
\arrow[dr, phantom, "\square"] 
\arrow[r]
\arrow[d]
& \Perf^{\, =}(\Deligne_G(\Sigma)) \arrow[d, "L\imath_{\Higgs}^{*}"] \\
\Perf(  \Higgs_G(X) ) \arrow[r, "(\cdot)^{an}"'] & \Perf( \Higgs_G(\Sigma)  )
\end{tikzcd}.
\end{equation}
Objects are pairs $(\BB, \TT)$ consisting of $\BB \in \Perf(\Higgs_G(X))$ and $\TT \in \Perf^{\, =}(\Deligne_G(\Sigma))$ such that $\BB^{an} \cong \imath_{\Higgs}^{*}\TT$ and morphisms are 
\[
\Hom\big((\BB_1, \TT_1), (\BB_2, \TT_2)\big) = \big\{ (b,t) \in \Hom(\BB_1, \BB_2) \times \Hom(\TT_1, \TT_2) 
\text{ such that $b^{an} \simeq L\imath_{\Higgs}^{*} t$ } \big\}.
\]
\end{definition}

\begin{remark}
A more restrictive definition of $\Perf^{\,=}(\Deligne_G(\Sigma)) \subset \Perf(\Deligne_G(\Sigma))$ (hence of $\BBB(\Higgs_G(X))$) could be made by considering a wider class of horizontal twistor lines. We expect suitable conditions to arise from a deeper analysis of mirror symmetry and geometric Langlands. 
\end{remark} 

The same diagram \eqref{eq bbb category for stacks} with the semistable loci defines the category $\BBB(\Higgs_G(X)^{\sst})$ of (BBB)-branes supported on $t_0(\Higgs_G(X))^{\sst} \subset \Higgs_G(X)$. This is achieved with the subcategory $\Perf^{\,=}(t_0(\Deligne_G(\Sigma))^{\sst}) \subset \Perf(\Deligne_G(\Sigma)^{\sst})$ of complexes supported on the classical and semistable locus. In the notation of Definition \ref{de stacky horizontal twistor lines}, the notion of horizontal twistor lines in this case is achieved by the discussion surrounding \eqref{eq sst horizontal} (\textit{i.e.} by the choice $G' = G$ in Definition \ref{de stacky horizontal twistor lines}). We place these categories into the following cube, in which the front and back faces are the Cartesian squares that define the respective (BBB)-brane categories. 

\begin{equation*}
\begin{tikzcd}[row sep={40,between origins}, column sep={70,between origins}]
& \BBB(\Mm_{\Dol}(X,G)) \ar{rr}\ar{dd} 
\arrow[dl, dashed, "(f_{\Dol} \times f_{\Del})^{*}"'] 
& & \Perf^{\, =}(\Tw(\Mm_{\Dol}(\Sigma, G))) \vphantom{c} \ar{dd} \ar[dl, "f_{\Del}^{*}"]
\\
\BBB(t_0(\Higgs_G(X))^{\sst}) \ar[crossing over]{rr} \ar{dd} & & \Perf^{\, =}(t_0(\Deligne_G(\Sigma))^{\sst})
\\
& \Perf(\Mm_{\Dol}(X,G)) \ar{rr} \ar[dl, "f_{\Dol}^{*}"'] & & \Perf(\Mm_{\Dol}(\Sigma, G)) \vphantom{f} \ar[dl, "f_{\Dol}^{an, *}"] 
\\
\Perf(t_0(\Higgs_G(X))^{\sst}) \ar{rr} && \Perf(t_0(\Higgs_G(\Sigma))^{\sst}) \ar[from=uu,crossing over] 
\end{tikzcd}
\end{equation*}

The following fills in the dashed arrow above, showing that (BBB)-branes on the stack naturally contains (BBB)-branes that are supported on the moduli space. 

\begin{proposition}
Consider a (BBB)-brane $(\BB, \TT) \in \BBB(\Mm_{\Dol}(X,G))$. Then, its pullback under the structural morphisms of good moduli spaces 
\[
f_{\Dol} : \Higgs_{G}(X)^{\sst} \to  \Mm_{\Dol}(X,G), \quad \text{and} \quad f_{\Del} : \Deligne_{G}(\Sigma)^{\sst} \to  \Tw(\Mm_{\Dol}(\Sigma,G))
\]
produces an element of $\BBB(t_0(\Higgs_G(X))^{\sst})$.
\end{proposition}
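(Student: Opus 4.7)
The plan is to define the candidate pair
\[
\BB' := Lf_{\Dol}^{*} \BB, \qquad \TT' := Lf_{\Del}^{*} \TT,
\]
and verify the three requirements of the semistable variant of Definition \ref{de stacky BBB}: perfectness of $\BB'$ and $\TT'$, triviality of $\TT'$ on horizontal twistor lines, and the gluing compatibility $(\BB')^{an} \simeq L\imath_{\Higgs}^{*}\TT'$. Perfectness is automatic since derived pullback preserves $\Perf(\cdot)$.

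For the horizontal condition, I invoke the $G' = G$ specialisation of Definition \ref{de stacky horizontal twistor lines} (the formulation around \eqref{eq sst horizontal}): a twistor line $\sigma : \PP^1 \to t_0(\Deligne_G(\Sigma))^{\sst}$ is horizontal in the semistable sense exactly when $f_{\Del} \circ \sigma$ is a horizontal twistor line on $\Tw(\Mm_{\Dol}(\Sigma, G))$ in the classical sense of \eqref{eq horiz}. Functoriality of pullback yields $L\sigma^{*} \TT' \simeq L(f_{\Del} \circ \sigma)^{*}\TT$, and the right-hand side is quasi-isomorphic to a complex of free $\Oo_{\PP^1}$-modules because $\TT \in \Perf^{\,=}(\Tw(\Mm_{\Dol}(\Sigma, G)))$. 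Hence $\TT' \in \Perf^{\,=}(t_0(\Deligne_G(\Sigma))^{\sst})$.

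The compatibility with analytification follows by assembling the square
\begin{equation*}
\begin{tikzcd}
t_0(\Higgs_G(\Sigma))^{\sst} \arrow[d, "f_{\Dol}^{an}"'] \arrow[r, "\imath_{\Higgs}"] & t_0(\Deligne_G(\Sigma))^{\sst} \arrow[d, "f_{\Del}"] \\
\Mm_{\Dol}(\Sigma, G) \arrow[r, "\imath_{\Dol}"'] & \Tw(\Mm_{\Dol}(\Sigma, G))
\end{tikzcd}
\end{equation*}
whose horizontal arrows are the inclusions of the fibers over $0 \in \PP^1$ of $\tau$ and its twistor analogue, and whose vertical arrows are compatible with these projections by the gluing construction of $f_{\Del}$ in Proposition \ref{pr deligne moduli}. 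The square is therefore Cartesian. Combining derived base change with the GAGA-type identification $(Lf_{\Dol}^{*}\BB)^{an} \simeq L(f_{\Dol}^{an})^{*}\BB^{an}$ of Holstein-Porta and the hypothesis $\BB^{an} \simeq L\imath_{\Dol}^{*}\TT$ yields
\[
(\BB')^{an} \simeq L(f_{\Dol}^{an})^{*} L\imath_{\Dol}^{*}\TT \simeq L\imath_{\Higgs}^{*} L f_{\Del}^{*}\TT = L\imath_{\Higgs}^{*}\TT',
\]
and the induced action on morphisms $(b, t) \mapsto (Lf_{\Dol}^{*}b, Lf_{\Del}^{*}t)$ respects the Hom-set condition by the same compatibility applied separately to $b$ and $t$.

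The main obstacle is justifying that the displayed square is Cartesian and that derived base change applies. Via the construction of $f_{\Del}$ in Proposition \ref{pr deligne moduli} as a gluing of the Hodge moduli-space morphisms over $\Sigma$ and $\ol{\Sigma}$, this reduces to the analogous statement for the analytic Hodge moduli spaces: compatibility of the good quotient construction with taking fibers over the affine parameter $\lambda \in \AA^1$, which is a standard property of good moduli spaces under flat base change.
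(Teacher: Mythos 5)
Your proof is correct and follows essentially the same route as the paper's: pullback preserves perfectness, functoriality of pullback along twistor lines gives the horizontal condition (the paper packages this as Lemma \ref{lm compatibility of trivializations}, but in the semistable $G'=G$ setting your direct computation $L\sigma^{*}\TT' \simeq L(f_{\Del}\circ\sigma)^{*}\TT$ is exactly its content), and the commutative square of good moduli spaces gives the gluing compatibility. The one remark: the step you single out as the main obstacle (Cartesianness of the square and derived base change) is not actually needed, since both legs enter only through pullbacks, so mere commutativity $\imath_{\Dol}\circ f_{\Dol}^{an} = f_{\Del}\circ t_0(\imath_{\Higgs})$ — immediate from the fiberwise gluing construction of $f_{\Del}$ in Proposition \ref{pr deligne moduli} — already yields $L(f_{\Dol}^{an})^{*}L\imath_{\Dol}^{*}\TT \simeq Lt_0(\imath_{\Higgs})^{*}Lf_{\Del}^{*}\TT$, which is how the paper argues.
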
 

\begin{proof} 
The proof follows from some functorial observations. Firstly, the pullback of a perfect complex is again perfect, so both $f_{\Dol}^{*}\BB$ and $f_{\Del}^{*}\TT$ are perfect. Secondly,  
Lemma \ref{lm compatibility of trivializations} confirms that $f_{\Del}^*$ preserves the subcategories $\Perf^{\, = }(\cdot)$, so 
\[
f_{\Del}^{*}\TT \in \Perf^{\, = }(t_0(\Higgs_G(\Sigma))^{\sst}).
\]
Thirdly, we check the pair $(f_{\Dol}^{*}\BB, f_{\Del}^{*}\TT)$ is compatible with restriction along $\Higgs_G(\Sigma) \hookrightarrow \Deligne_G(\Sigma)$. Observe that one can consider the good moduli spaces sitting in the commutative diagram
\[
\begin{tikzcd}
t_0(\Higgs_{G}(\Sigma))^{\sst}   
\arrow[dr, phantom, "\square"] 
\arrow[r, "t_0(\imath_{\Higgs})"]
\arrow[d, "f_{\Dol}^{an}"']
& t_0(\Deligne_{G}(\Sigma))^{\sst} \arrow[d, "f_{\Del}"] \\ \Mm_{\Dol}(\Sigma, G)   
\arrow[r, "\imath_{\Dol}"']
& \Tw(\Mm_{\Dol}(\Sigma, G))
\end{tikzcd}, 
\]
where $t_0(\imath_{\Higgs})$ is the restriction of $\imath_{\Higgs}$ to the classical locus. Then,  
\[
Lt_0(\imath_{\Higgs})^{*} f_{\Del}^* \TT \cong (f_{\Dol}^{an})^* L\imath_{\Dol}^{*} \TT \cong (f_{\Dol}^{an})^* \BB^{an} \cong (f_{\Dol}^* \BB)^{an},
\]
where the last isomorphism follows from the compatibility between pullback and analytification.
\end{proof}

\section{Examples of (BBB)-branes}
\label{se nahm branes}

With respect to our new notion of (BBB)-brane, we now place such a structure on the Dirac-Higgs complex $\DD_{\rho} = Rp_{2,*}(\rho(\Uu)) \in \Perf(\Higgs_G(X))$. More generally, we construct a family of (BBB)-branes with underlying complex
\begin{equation}
\label{eq dol branes intro}
\Phi_{\Bon,\rho}(E,\phi) = Rp_{2,*}(\rho(\Uu) \otimes (E,\phi)).
\end{equation}
We call these \textit{integral branes}. On the $GL_n$ moduli spaces, the Dirac-Higgs bundle has previously been understood as a (BBB)-brane \cite{hitchin_dirac}, \cite[Theorem 2.6.3]{blaavand}, and the twisted family \eqref{eq dol branes intro} resembles branes studied by Bonsdorff, Franco-Jardim and Frejlich-Jardim \cite{bonsdorff_1, bonsdorff_2, franco&jardim, frejlich&jardim}. Our constructions shall fill in the dashed arrow in the diagram 
\begin{equation}
\adjustbox{scale=0.93,center}{%
\label{eq construct BBB}
\begin{tikzcd}
\PHgs(\Sigma, K_{\Sigma}) \times_{\PHgs(X, K_X)} (\Perf(\Sigma_{\Hod}) \times_{B} \Perf(\ol{\Sigma}_{\Hod}))^{\, =} \arrow[ddr, bend right=20, "\Phi_{\Bon, \rho}"'] \arrow[dr, dashed] \arrow[drr, bend left=20, "\Psi_{\Del, \rho}"] &  & \\
& \BBB( \Higgs_G(X) )  
\arrow[dr, phantom, "\square"] 
\arrow[r]
\arrow[d]
& \Perf^{\, =}(\Deligne_G(\Sigma)) \arrow[d, "L\imath_{\Dol}^{*}"] 
\\
& \Perf(  \Higgs_G(X))  \arrow[r, "(\cdot)^{an}"'] & \Perf( \Higgs_G(\Sigma)  )
\end{tikzcd}}
\end{equation}
A suitable pair $(\BB, \TT) = ( \Phi_{\Bon, \rho}(E,\phi) , \Psi_{\Del, \rho}( \widetilde{\Ff} ) )$ becomes a (BBB)-brane after we compute the analytification $\BB^{an}$ and check that $\TT$ pulls back to $\BB^{an}$. The following section is dedicated to verifying these conditions.

\subsection{Computing analytifications}
\label{se GAGA}
Recall the embeddings 
\[
\lambda_X : \Sigma = X^{an} \hookrightarrow X,
\]
\[
\lambda_{\Higgs} : \Higgs_G(\Sigma) \cong \Higgs_G(X)^{an} \hookrightarrow \Higgs_G(X),
\]
that pullback to define the analytification functors 
\[
(\cdot)^{an} : \Perf(X) \to \Perf(\Sigma),
\]
\[
(\cdot)^{an} = \lambda_{\Higgs}^{*} : \Perf(\Higgs_G(X)) \to \Perf(\Higgs_G(\Sigma)).
\]
Recall the algebraic and analytic Bonsdorff functors (Definition \ref{de bonsdorff}) are defined to be the integral functors with integral kernels $\Uu$ and $\Uu^{an}$, 
\[
\Phi_{\Bon, \rho} : \PHgs(X,K_X) \to \Perf( \Higgs_G(X) ), \quad
(E,\phi) \mapsto Rp_{2,\star} \Big( \Vv_{\Dol} \otimes p_1^{*}(E,\phi)  \Big),
\]
\[
\Psi_{\Bon, \rho} : \PHgs(\Sigma, K_\Sigma)) \to \Perf( \Higgs_G(\Sigma) ), \quad 
(E,\phi) \mapsto R(p_2^{an})_{\star} \Big( \Vv^{an}_{\Dol} \otimes p^{an,*}_1(E,\phi)  \Big), 
\]
where $p_{2,\star}$ and $(p_2^{an})_{\star}$ are given by integrating in Dolbeault cohomology. The following confirms their relation via analytification. 

\begin{proposition}
\label{pr analytification}
Fix $(E,\phi) \in \Hgs(X, K_X)$ and take the analytification $(E,\phi)^{an} := \lambda_X^{*}(E,\phi) \in \Hgs(\Sigma, K_\Sigma)$. Then, in $\Perf(\Higgs_G(\Sigma))$, there exists an isomorphism 
\[
\Psi_{\Bon, \rho}( (E,\phi)^{an} ) \cong (\Phi_{\Bon, \rho} (E,\phi) ) ^{an}.
\]
\end{proposition}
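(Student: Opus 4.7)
The plan is to verify the isomorphism by unpacking both sides of the Bonsdorff functors and reducing to GAGA-type compatibilities between analytification and the standard operations (pullback, tensor, pushforward). I will begin with the Cartesian square
\[
\begin{tikzcd}
\Sigma \times \Higgs_G(\Sigma) \arrow[r, "\lambda_X \times \lambda_{\Higgs}"] \arrow[d, "p_2^{an}"'] \arrow[dr, phantom, "\square"] & X \times \Higgs_G(X) \arrow[d, "p_2"] \\
\Higgs_G(\Sigma) \arrow[r, "\lambda_{\Higgs}"'] & \Higgs_G(X)
\end{tikzcd},
\]
together with its analogue for $p_1$. Then I would establish three ingredients: (a) compatibility of analytification with tensor product and pullback, which is formal from the fact that $(\cdot)^{an}$ is a symmetric monoidal, exact $\infty$-functor in the Holstein--Porta / Porta--Yu setup; (b) the identification $(\lambda_X \times \lambda_{\Higgs})^{*} \rho(\Uu) \cong \rho(\Uu^{an})$, expressing the fact that the analytification of the algebraic universal $G$-Higgs bundle is the analytic universal $G$-Higgs bundle; and (c) a base change / GAGA-type equivalence
\[
\lambda_{\Higgs}^{*} \circ Rp_{2,*} \simeq R(p_2^{an})_{*} \circ (\lambda_X \times \lambda_{\Higgs})^{*}.
\]

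Ingredient (b) is a standard consequence of the moduli-theoretic description of $\Uu$: pulling back the universal Higgs bundle along the analytification of the base gives the universal object for the analytic moduli problem, by the uniqueness of universal families. Ingredient (c) is the key content and where all the real work sits. Since $X$ is a smooth projective curve, $p_2$ is a proper morphism of derived stacks, so the derived GAGA theorem for proper pushforwards in the analytic topology (Porta's analytic GAGA, extended to derived mapping stacks by Porta--Yu and Holstein--Porta) provides the required base-change compatibility. Equivalently, one may pass through the Dolbeault shape using Proposition \ref{pr Dol Mod commute} and the equivalence $p_{2,\star} \simeq q_{2,*} \circ (\kappa \times \id)_{\heart}$, reducing the statement to GAGA for the categorically proper morphism $q_2 : X_{\Dol} \times \Higgs_G(X) \to \Higgs_G(X)$, which is covered by the proper pushforward results of Porta--Sala.

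Combining (a), (b), (c) gives a direct chain of isomorphisms:
\begin{align*}
(\Phi_{\Bon, \rho}(E,\phi))^{an}
&= \lambda_{\Higgs}^{*}\, Rp_{2,\star}\bigl(\rho(\Uu) \otimes p_1^{*}(E,\phi)\bigr) \\
&\cong R(p_2^{an})_{\star}\, (\lambda_X \times \lambda_{\Higgs})^{*}\bigl(\rho(\Uu) \otimes p_1^{*}(E,\phi)\bigr) \\
&\cong R(p_2^{an})_{\star}\bigl(\rho(\Uu^{an}) \otimes p_1^{an, *}(E,\phi)^{an}\bigr) \\
&= \Psi_{\Bon, \rho}((E,\phi)^{an}).
\end{align*}
The main obstacle is justifying the commutation of analytification with $Rp_{2,\star}$, since $p_{2,\star}$ is the Dolbeault-cohomology pushforward rather than the naive sheaf pushforward; the cleanest route is to transport everything to the Dolbeault shape, where standard derived GAGA for categorically proper morphisms applies directly, and then transport back via $\kappa$ and its analytic counterpart, noting that $\kappa$ is itself natural under analytification.
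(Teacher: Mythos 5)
Your proposal follows the same skeleton as the paper's proof: identify $(\cdot)^{an}$ with pullback along $\lambda_X\times\lambda_{\Higgs}$, use compatibility with tensor product and $p_1^{*}$, identify $\rho(\Uu)^{an}\cong\rho(\Uu^{an})$, and invoke the Porta--Yu GAGA comparison $(Rp_{2,*}\Ff)^{an}\cong R(p_2^{an})_*\Ff^{an}$ for the proper projection $p_2$. You also correctly isolate the one genuine subtlety, namely that $p_{2,\star}$ is the Dolbeault pushforward rather than the plain sheaf pushforward. Where you diverge is in how you resolve it: the paper stays on $X\times\Higgs_G(X)$ and observes that $Rp_{2,\star}\bigl(\rho(\Uu)\otimes p_1^{*}(E,\phi)\bigr)$ is a cone on a morphism of ordinary pushforwards $Rp_{2,*}$, so that exactness of analytification plus the already-cited GAGA for the honestly proper $p_2$ finishes the argument; your preferred route instead transports everything to $X_{\Dol}\times\Higgs_G(X)$ via $\kappa$ and appeals to a GAGA comparison for $q_2$. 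Be aware that the Porta--Sala input you cite for $q_2$ is \emph{categorical properness}, which gives preservation of perfect complexes under pushforward but is not by itself the algebraic-to-analytic comparison isomorphism you need; you would additionally have to know that the equivalence $\kappa^{\heart}$ and its analytic counterpart intertwine the two analytification functors. Both of these are plausible and consistent with what the paper uses elsewhere, but they are extra inputs that the paper's cone trick avoids, since the latter reduces the whole statement to GAGA for a proper morphism of the underlying projective curve, where the cited theorem applies verbatim. Your fallback formulation via GAGA for $p_2$ directly is exactly the paper's argument once the cone observation is added.
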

\begin{proof}
Fix $\Ff \in \Perf(X \times \Higgs_G(X))$. Since the projection $p_2 : X \times \Higgs_G(X) \to \Higgs_G(X)$ is proper, we can apply a GAGA-type theorem of Porta-Yu \cite[Theorem 1.2]{porta&yu}, which provides an isomorphism
\begin{equation} \label{eq push-forward and analytification}
(Rp_{2,*} \Ff)^{an} \cong R(p_{2}^{an})_{*} \Ff^{an}.
\end{equation}
Observe as well that $Rp_{2,\star} \Big( \rho(\Uu) \otimes p_1^{*}(E,\phi) \Big)$ can be obtained as the cone of the map 
\[
Rp_{2,*} \Big( p_1^{*}(E,\phi) \Big) \to Rp_{2,*} \Big( \rho(\Uu) \otimes p_1^{*}(E,\phi) \Big),
\]
and similarly, $R(p_2^{an})_{\star} \Big( \rho(\Uu^{an}) \otimes (p_1^{*}(E,\phi))^{an} \Big)$ is the cone of the corresponding analytic morphism. In combination with \eqref{eq push-forward and analytification} this gives
\[
\Big(Rp_{2,\star} \Big( \rho(\Uu) \otimes p_1^{*}(E,\phi) \Big) \Big)^{an} \cong R(p_2^{an})_{\star} \Big( \rho(\Uu^{an}) \otimes (p_1^{*}(E,\phi))^{an} \Big).
\]
From the commutative square 
\begin{equation*}
\begin{tikzcd}[column sep = huge]
\Sigma \times \Higgs_G(\Sigma) 
\arrow[r, "\lambda_{X} \times \lambda_{\Higgs}"]
\arrow[d, "p_1^{an}"'] 
& X \times \Higgs_G(X) 
\arrow[d, "p_1"] \\
\Sigma \arrow[r, "\lambda_{X}"'] & X
\end{tikzcd},
\end{equation*}
it follows that, for any Higgs sheaf $(E,\phi) \in \Hgs(X,K_X)$, 
\[
(p_1^{*}(E,\phi))^{an} 
= (\lambda_{X} \times \lambda_{\Higgs})^{*} \circ p_1^{*}(E,\phi) 
\cong p_1^{an,*} \lambda_{X}^{*} (E,\phi) = p_1^{an,*} (E,\phi)^{an}.
\]
Combining the above isomorphisms provides 
\begin{align*}
(\Phi_{\Bon,\rho}(E,\phi))^{an} = Rp_{2,\star} \Big( \rho(\Uu) \otimes p_1^{*}(E,\phi) \Big)^{an} 
& \cong R(p_2^{an})_{\star} \Big( \rho(\Uu^{an}) \otimes (p_1^{*}(E,\phi))^{an} \Big) \\
& \cong R(p_2^{an})_{\star} \Big( \rho(\Uu^{an}) \otimes p_1^{an,*} (E,\phi)^{an} \Big) \\
& = \Psi_{\Bon, \rho}( (E,\phi)^{an} ). \qedhere
\end{align*}
\end{proof}

\subsection{Pullback of Deligne functor}
\label{se pullback deligne}
Recall the embeddings 
\begin{equation}
\label{eq embeddings}
\begin{tikzcd}[column sep = huge]
\Higgs_G(\Sigma) \arrow[dr, hookrightarrow, "\jmath_{\Higgs}"'] \arrow[r,  hookrightarrow, "\imath_{\Higgs}"]   & \Deligne_G(\Sigma) \\
 & \Hodge_G(\Sigma) \arrow[u, hookrightarrow, "\imath_{\Hodge}"]
\end{tikzcd}, 
\quad 
\begin{tikzcd}[column sep = huge]
\Sigma_{\Dol}
\arrow[r,  hookrightarrow, "\jmath_{\Dol}"]   
& \Sigma_{\Hod} 
\end{tikzcd},
\end{equation}
and consider the natural functor 
\[
\Perf(\Sigma_{\Hod}) \times_B \Perf(\ol{\Sigma}_{\Hod}) \to \Perf(\Sigma_{\Dol}), \quad (\Ff, \ol \Ff, f) \mapsto L\jmath_{\Dol}^{*} \Ff. 
\]
\begin{proposition}
\label{pr dolbeault restriction}
The Deligne and Dolbeault functors fit into a commutative diagram  
\begin{equation*}
\begin{tikzcd}[column sep = huge]
\Perf(\Sigma_{\Hod}) \times_B \Perf(\ol{\Sigma}_{\Hod}) \arrow[d, "L\jmath_{\Dol}^{*}"'] \arrow[r, "\Psi_{\Del,\rho}"]  & \Perf( \Deligne_G(\Sigma) ) \arrow[d, "L\imath_{\Higgs}^{*}"] \\
\Perf( \Sigma_{\Dol} )  \arrow[r, "\Psi_{\Dol,\rho}"'] & \Perf( \Higgs_G(\Sigma) ) 
\end{tikzcd}.
\end{equation*}
\end{proposition}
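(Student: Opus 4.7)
The plan is to factor the embedding as $\imath_{\Higgs} = \imath_{\Hodge} \circ \jmath_{\Higgs}$, where $\jmath_{\Higgs} \colon \Higgs_G(\Sigma) \hookrightarrow \Hodge_G(\Sigma)$ is the inclusion of the fiber over $0 \in \AA^1$ (cf.\ \eqref{eq triv rescaling}), and to split the desired commutativity into two successive compatibilities. The first is immediate from the Cartesian description \eqref{eq sheaves on deligne} together with Definition \ref{de deligne functor}: a triple $(\Ff, \ol\Ff, f) \in \Perf(\Sigma_{\Hod}) \times_B \Perf(\ol{\Sigma}_{\Hod})$ is sent by $\Psi_{\Del,\rho}$ to the gluing $\Psi_{\Hod,\rho}(\Ff) \cup_{\widehat{\Psi}_{B,\rho}(f)} \Psi_{\ol{\Hod},\rho}(\ol\Ff)$, and pullback along $\imath_{\Hodge}$ extracts the first factor $\Psi_{\Hod,\rho}(\Ff)$. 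It therefore remains to produce a natural isomorphism $L\jmath_{\Higgs}^* \Psi_{\Hod,\rho}(\Ff) \simeq \Psi_{\Dol,\rho}(L\jmath_{\Dol}^* \Ff)$, which is the $\lambda = 0$ analogue of Lemma \ref{le commutivity of upper diagram}.

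For this second compatibility I would run a standard base-change argument on the Cartesian diagram
\[
\begin{tikzcd}
\Sigma_{\Dol} \arrow[d, hook, "\jmath_{\Dol}"'] & \Sigma_{\Dol} \times \Higgs_G(\Sigma) \arrow[l, "q_1"'] \arrow[r, "q_2"] \arrow[d, hook] \arrow[dr, phantom, "\square"] & \Higgs_G(\Sigma) \arrow[d, hook, "\jmath_{\Higgs}"] \\
\Sigma_{\Hod} & \Sigma_{\Hod} \times_{\AA^1} \Hodge_G(\Sigma) \arrow[l, "h_1"'] \arrow[r, "h_2"] & \Hodge_G(\Sigma)
\end{tikzcd}
\]
whose middle vertical arrow is $\jmath_{\Dol} \times \jmath_{\Higgs}$, obtained by specialising both factors over $0 \in \AA^1$. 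The essential geometric input is the universal-bundle identity $(\jmath_{\Dol} \times \jmath_{\Higgs})^* \Vv_{\Hod} \cong \Vv_{\Dol}$, which expresses that the universal $\lambda$-connection restricted to $\lambda = 0$ recovers the universal Higgs field (Section \ref{se lambda connections}). Combining base change along the right-hand square, the projection formula against $\rho(\Vv_{\Hod})$, and functoriality of pullback along the left-hand square then yields
\begin{align*}
L\jmath_{\Higgs}^* \Psi_{\Hod,\rho}(\Ff) &\simeq R(q_2)_* (\jmath_{\Dol} \times \jmath_{\Higgs})^* \bigl( \rho(\Vv_{\Hod}) \otimes^L h_1^* \Ff \bigr) \\
&\simeq R(q_2)_* \bigl( \rho(\Vv_{\Dol}) \otimes^L q_1^* L\jmath_{\Dol}^* \Ff \bigr) \\
&= \Psi_{\Dol,\rho}(L\jmath_{\Dol}^* \Ff),
\end{align*}
as desired.

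No deep obstacle is expected: the whole argument is functorial bookkeeping in derived analytic categories, entirely parallel to the $\AA^1 - 0$ version established in Lemma \ref{le commutivity of upper diagram}. The only points that demand genuine care are the Cartesianness of the right-hand square and the universal-bundle specialisation identity $(\jmath_{\Dol} \times \jmath_{\Higgs})^* \Vv_{\Hod} \cong \Vv_{\Dol}$, both of which follow from the construction of the Hodge shape as the deformation to the normal bundle of $\Sigma \to \Sigma_{\dR}$ and the defining universal property of $\Hodge_G(\Sigma) = \anMaps(\Sigma_{\Hod}/\AA^1, BG)$.
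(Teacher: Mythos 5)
Your proposal is correct and follows essentially the same route as the paper's proof: first use the gluing description of $\Psi_{\Del,\rho}$ and the factorisation $\imath_{\Higgs} = \imath_{\Hodge}\circ\jmath_{\Higgs}$ to reduce to the Hodge functor, then conclude by base change over the Cartesian square identifying the fibre of $\Sigma_{\Hod}\times_{\AA^1}\Hodge_G(\Sigma)$ over $\Higgs_G(\Sigma)$ with $\Sigma_{\Dol}\times\Higgs_G(\Sigma)$, together with the projection formula and the restriction isomorphism $(\jmath_{\Dol}\times\jmath_{\Higgs})^*\Vv_{\Hod}\cong\Vv_{\Dol}$. The paper writes the same universal-bundle identity as $L(\id\times\jmath_{\Higgs})^*(\Vv_{\Hod})^{an}\cong(\Vv_{\Dol})^{an}$ under that identification, but the argument is the same.
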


\begin{proof}
The isomorphism $\Sigma_{\Dol} \times \Higgs_G(\Sigma) \cong \Sigma_{\Hod} \times_{\AA^1} \Higgs_G(\Sigma)$ provides a Cartesian square 
\begin{equation*}
\begin{tikzcd} 
\Sigma_{\Dol} 
\arrow[r, "\jmath_{\Dol}"]
& \Sigma_{\Hod} \\
\Sigma_{\Dol} \times \Higgs_G(\Sigma) 
\arrow[u, "p_1"]
\arrow[r, "\id \times \jmath_{\Higgs}"] 
\arrow[d, "p_2"'] 
\arrow[dr, phantom, "\square"] 
& \Sigma_{\Hod} \times_{\AA^1} \Hodge_G(\Sigma) \arrow[d, "h_2"] \arrow[u, "h_1"'] \\
\Higgs_G(\Sigma) \arrow[r, "\jmath_{\Higgs}"'] & \Hodge_G(\Sigma)
\end{tikzcd},
\end{equation*}
and base change provides the equivalence $L\jmath_{\Higgs}^{*} \circ Rh_{2,*} \simeq Rp_{2,*} \circ L(\id \times \jmath_{\Higgs})^{*}$. Combined with commutivity of the embeddings in \eqref{eq embeddings}, commutivity of the upper square and the isomorphism $L(\id \times \jmath_{\Higgs})^{*} (\Vv_{\Hod})^{an} \cong (\Vv_{\Dol})^{an}$, we have 
\begin{align*}
L\imath_{\Higgs}^{*} \Psi_{\Del, \rho}(\Ff, \ol{\Ff}, f) 
& \cong L\jmath_{\Higgs}^{*} \Psi_{\Hod, \rho}(\Ff) \\
& = L\jmath_{\Higgs}^{*} Rh_{2,*} \Big( (\Vv_{\Hod})^{an} \otimes h_1^{*} \Ff \Big) \\
& \cong Rp_{2,*} \Big( L(\id \times \jmath_{\Higgs})^{*} (\Vv_{\Hod})^{an} \otimes L(\id \times \jmath_{\Higgs})^{*} h_1^{*} \Ff \Big) \\
& \cong Rp_{2,*} \Big((\Vv_{\Dol})^{an} \otimes p_1^{*} \circ L\jmath_{\Dol}^{*} \Ff \Big) \\
& = \Psi_{\Dol, \rho}\Big(L\jmath_{\Dol}^{*} \Ff \Big),
\end{align*}
which concludes the proof
\end{proof}

\subsection{Horizontal twistor lines}
\label{se twistor lines}
To construct complexes on the Deligne stack that are trivial on horizontal twistor lines, one can formally consider the base change 
\begin{equation}
\label{eq hodge =}
\begin{tikzcd}
\left( \Perf(\Sigma_{\Hod}) \times_B \Perf(\ol\Sigma_{\Hod}) \right)^{\,=} 
\arrow[r, "\Psi_{\Del, \rho}"]
\arrow[d, hook]
& \Perf^{\, =}(\Deligne_G(\Sigma)) 
\arrow[d, hook]
\\
\Perf(\Sigma_{\Hod}) \times_B \Perf(\ol\Sigma_{\Hod})
\arrow[r, "\Psi_{\Del, \rho}"] 
& \Perf(\Deligne_G(\Sigma))
\end{tikzcd}.
\end{equation}
A natural family of explicit examples are given as follows.
\begin{proposition}
\label{pr twistor lines} 
Fix an object $(\Ff, \ol{\Ff}, f) \in \Perf(\Sigma_{\Hod}) \times_B \Perf(\ol{\Sigma}_{\Hod})$ such that $f : \Ff_B \xrightarrow{\cong} \ol{\Ff}_B$ is equivalent to the identity map in $\Perf(\Sigma_B \times (\AA^1 - 0))$. Then, given a horizontal twistor line $\sigma \in \Sect^{\, =}( \PP^1, \Deligne_G(\Sigma))$, the pullback $\sigma^{*} \Psi_{\Del, \rho}(\Ff, \ol{\Ff}, f)$ is trivialisable. 

In other words, such an $(\Ff, \ol{\Ff}, f)$ is an object of $\left(\Perf(\Sigma_{\Hod}) \times_B \Perf(\ol{\Sigma}_{\Hod}) \right )^{\, =}$. 
\end{proposition}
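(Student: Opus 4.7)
The plan is to compute $\sigma^{*} \Psi_{\Del, \rho}(\Ff, \ol\Ff, f)$ directly, using the pushout structure of $\Deligne_G(\Sigma)$ together with the explicit construction of the Deligne functor in Definition \ref{de deligne functor}.

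First, I would decompose the twistor line along the affine cover of $\PP^1$. Setting $\AA^1_0 := \tau^{-1}(\PP^1 - \{\infty\})$ and $\AA^1_\infty := \tau^{-1}(\PP^1 - \{0\})$, the twistor line $\sigma$ restricts to sections $\sigma_0 : \AA^1_0 \to \Hodge_G(\Sigma)$ and $\sigma_\infty : \AA^1_\infty \to \Hodge_G(\ol\Sigma)$, whose restrictions to the equator $\CC^{*} = \AA^1_0 \cap \AA^1_\infty$ agree via the Riemann-Hilbert correspondence and the rescaling isomorphism $\Triv$. Writing $\sigma_{0\infty}$ for the restriction of $\sigma$ to $\CC^{*}$ viewed as a section into $\Rep_G(\Sigma) \times (\AA^1 - 0)$, Definition \ref{de deligne functor} together with the Cartesian square \eqref{eq sheaves on deligne} identifies the pullback as the pushout
\begin{equation*}
\sigma^{*} \Psi_{\Del, \rho}(\Ff, \ol\Ff, f) \,\simeq\, \sigma_0^{*} \Psi_{\Hod, \rho}(\Ff) \,\cup_{\sigma_{0\infty}^{*} \widehat\Psi_{B, \rho}(f)}\, \sigma_\infty^{*} \Psi_{\ol\Hod, \rho}(\ol\Ff).
\end{equation*}

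Second, I would apply the hypothesis. By functoriality of $\widehat\Psi_{B, \rho}$, the assumption $f \simeq \id$ gives $\widehat\Psi_{B, \rho}(f) \simeq \widehat\Psi_{B, \rho}(\id) = \id$, and pulling back along $\sigma_{0\infty}$ yields $\sigma_{0\infty}^{*} \widehat\Psi_{B, \rho}(f) \simeq \id$. The pushout above is therefore equivalent in $\Perf(\PP^1)$ to the gluing along the identity transition function.

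Third, each hemisphere $\AA^1_0, \AA^1_\infty \cong \Spec \CC[z]$ is an affine regular curve, over which every perfect complex is quasi-isomorphic to a bounded complex of free sheaves. Choosing such free resolutions of $\sigma_0^{*} \Psi_{\Hod, \rho}(\Ff)$ and $\sigma_\infty^{*} \Psi_{\ol\Hod, \rho}(\ol\Ff)$ with matching restrictions to $\CC^{*}$ and gluing by the identity produces a bounded complex of free $\Oo_{\PP^1}$-modules quasi-isomorphic to $\sigma^{*} \Psi_{\Del, \rho}(\Ff, \ol\Ff, f)$, establishing trivialisability in the sense of Section \ref{se twistoral BBB}.

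The main obstacle is aligning the free resolutions on the two hemispheres with the trivial gluing datum at the level of $\infty$-categorical coherences. This should be handled by the general fact that homotopy pushouts depend only on the equivalence class of the gluing map, allowing one to replace $\widehat\Psi_{B, \rho}(f)$ by a strict identity after passing to a cofibrant model. I note that the horizontal condition on $\sigma$ is not explicitly invoked by this argument; the proposition is stated for horizontal twistor lines in order to land precisely in the subcategory $\Perf^{\, =}(\Deligne_G(\Sigma))$ used in Definition \ref{de stacky BBB}.
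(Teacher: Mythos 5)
Your proposal is correct and follows essentially the same route as the paper: restrict the gluing datum of $\Psi_{\Del,\rho}(\Ff,\ol\Ff,f)$ along the twistor line, use functoriality to turn $f\cong\id$ into a trivial transition function over the equator, and conclude that one is gluing complexes of free sheaves by the identity. The only difference is that you spell out explicitly why the restrictions to the two affine hemispheres are complexes of free sheaves and flag the homotopy-coherence of the gluing, points the paper's proof leaves implicit; your closing observation that the horizontal hypothesis is not actually used also matches the paper's argument.
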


\begin{proof}
Recall the Deligne functor transforms $f : \Ff_B \xrightarrow{\cong} \ol\Ff_B$ into an isomorpism 
\begin{equation}
\label{eq glue data}
g = \widehat{\Psi}_{B,\rho}(f) : (\Psi_{\Hod,\rho}(\Ff))_{\Rep} \xrightarrow{\cong} (\Psi_{\ol\Hod,\rho}(\ol\Ff))_{\Rep}.
\end{equation}
Let $\sigma_{\Rep} : \PP^1 - \{ 0,\infty \} \to \Rep_G(\Sigma) \times (\PP^1 - \{0,\infty\})$ be the restriction of the twistor line to the substack 
\[
\Rep_G(\Sigma) \times (\PP^1 - \{0,\infty\}) \subset \Deligne_G(\Sigma).
\] 
The pullback of the data in \eqref{eq glue data} along $\sigma_{\Rep}$ produces a pair of complexes on either hemispheres of $\PP^1 = \CC \cup \CC$ and a transition function over the equator $\PP^1 - \{0,\infty\}$. The transition function can be written 
\[
\sigma_{\Rep}^{*} \widehat{\Psi}_{B,\rho}(f)  :  \sigma_{\Rep}^{*} \Psi_{\Hod,\rho}(\Ff))_{\Rep} \to \sigma_{\Rep}^{*} \Psi_{\Hod,\rho}(\ol\Ff))_{\Rep}. 
\]
The hypothesis $f \cong \id$ transforms to an equivalence $\sigma_{\Rep}^{*} \widehat{\Psi}_{B,\rho}(f) \cong \id$, so one glues two complexes of free sheaves along the identity map. The result is that $\sigma^{*} \Psi_{\Del, \rho}(\Ff, \ol{\Ff}, f)$ is a complex of free sheaves.  
\end{proof}

\subsection{Construction of (BBB)-branes}

As indicated by the diagram \eqref{eq construct BBB}, our initial data for constructing (BBB)-branes consists of a vector
\[
(E,\phi, \Ff, \ol\Ff, f),
\] 
such that 
\begin{itemize}
    \item $(E,\phi) \in \PHgs(X, K_X)$ is an algebraic complex of locally free Higgs sheaves on $X$, 
    
    \item $(\Ff,\ol\Ff, f) \in \Perf(\Sigma_{\Hod}) \times_B \Perf(\ol\Sigma_{\Hod})$ is a product of perfect analytic complexes on Hodge shapes (see \eqref{eq Dom}), 

    \item Moreover $(\Ff,\ol\Ff, f) \in \left( \Perf(\Sigma_{\Hod}) \times_B \Perf(\ol\Sigma_{\Hod}) \right)^{\,=}$, where the decoration $(\cdot)^{^{\,=}}$ is defined by the Cartesian square \eqref{eq hodge =},  
    
    \item $\Ff$ restricts to $(E,\phi)$, or more precisely:
     \[
    \kappa^{\heart}(L\jmath_{\Dol}^{*}\Ff) \cong (E,\phi)^{an}.
    \]
\end{itemize}
We now check that evaluating the Bonsdorff and Deligne functors on this vector produces a family of (BBB)-branes.  

\label{se BBB theorems}
\begin{theorem}
\label{th BBB}
Given the data just described, the pair
\[
\BB := \Phi_{\Bon,\rho}(E,\phi) \in \Perf(\Higgs_G(X)),
\]
\[
\TT := \Psi_{\Del, \rho}(\Ff, \ol\Ff, f) \in \Perf^{\, =}(\Deligne_G(\Sigma)),
\]
defines an object $(\BB, \TT) \in \BBB(\Higgs_G(X))$. We call such objects \textbf{integral branes.}
\end{theorem}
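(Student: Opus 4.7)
The plan is to verify the three conditions required by the Cartesian description of $\BBB(\Higgs_G(X))$ in Definition \ref{de stacky BBB} in turn. First, perfectness of $\BB = \Phi_{\Bon,\rho}(E,\phi)$ over $\Higgs_G(X)$ is immediate, since the Bonsdorff functor lands in $\Perf(\Higgs_G(X))$ by construction (Definition \ref{de bonsdorff}). Second, to show that $\TT = \Psi_{\Del,\rho}(\Ff,\ol\Ff,f)$ lies in the subcategory $\Perf^{\,=}(\Deligne_G(\Sigma))$, I would invoke the Cartesian square \eqref{eq hodge =}: by hypothesis $(\Ff,\ol\Ff,f)$ belongs to $\left(\Perf(\Sigma_{\Hod})\times_{B}\Perf(\ol\Sigma_{\Hod})\right)^{\,=}$, and the top horizontal arrow in \eqref{eq hodge =} is precisely the restriction of $\Psi_{\Del,\rho}$ to the horizontal-trivial subcategory.

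The substantial content lies in producing the gluing isomorphism $\BB^{an}\cong L\imath_{\Higgs}^{*}\TT$ in $\Perf(\Higgs_G(\Sigma))$ required for membership in the fiber product. My strategy is to chain four compatibilities established earlier in the paper:
\begin{align*}
\BB^{an}=\bigl(\Phi_{\Bon,\rho}(E,\phi)\bigr)^{an}
&\cong \Psi_{\Bon,\rho}\bigl((E,\phi)^{an}\bigr) && \text{(Proposition \ref{pr analytification})} \\
&\cong \Psi_{\Bon,\rho}\bigl(\kappa^{\heart}L\jmath_{\Dol}^{*}\Ff\bigr) && \text{(hypothesis on $(E,\phi)^{an}$)} \\
&\cong \Psi_{\Dol,\rho}\bigl(L\jmath_{\Dol}^{*}\Ff\bigr) && \text{(analytic form of Prop.\ \ref{pr Dol Mod commute})} \\
&\cong L\imath_{\Higgs}^{*}\Psi_{\Del,\rho}(\Ff,\ol\Ff,f)=L\imath_{\Higgs}^{*}\TT && \text{(Proposition \ref{pr dolbeault restriction}).}
\end{align*}
The first step transports the Bonsdorff functor across analytification using the Porta--Yu GAGA theorem; the second is a direct rewriting using the hypothesis $\kappa^{\heart}(L\jmath_{\Dol}^{*}\Ff)\cong(E,\phi)^{an}$; the third reinterprets Bonsdorff as Dolbeault via the equivalence $\kappa^{\heart}$ of Proposition \ref{pr QCoh Dol}; and the fourth pulls the Deligne functor back to the Higgs stack via $\imath_{\Higgs}$.

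The only real obstacle I anticipate is bookkeeping: verifying that Propositions \ref{pr analytification}, \ref{pr Dol Mod commute} and \ref{pr dolbeault restriction} chain up cleanly with the mixed algebraic/analytic input data, in particular that the analytic Bonsdorff--Dolbeault compatibility applies to the object $L\jmath_{\Dol}^{*}\Ff\in\Perf(\Sigma_{\Dol})$ and interacts correctly with analytification of the algebraic Higgs sheaf $(E,\phi)$. Since each of these compatibilities has been checked individually in the preceding sections and the hypothesis $\kappa^{\heart}(L\jmath_{\Dol}^{*}\Ff)\cong(E,\phi)^{an}$ is precisely what bridges the algebraic and analytic data, the proof reduces to pasting the four isomorphisms above and requires no new stack-theoretic or analytic input.
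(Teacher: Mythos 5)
Your proposal is correct and follows essentially the same route as the paper's own proof: the gluing isomorphism $\BB^{an}\cong L\imath_{\Higgs}^{*}\TT$ is obtained by chaining Propositions \ref{pr analytification}, \ref{pr Dol Mod commute} and \ref{pr dolbeault restriction} with the hypothesis $\kappa^{\heart}(L\jmath_{\Dol}^{*}\Ff)\cong(E,\phi)^{an}$, exactly as in the commutative diagram \eqref{eq BBB structure}. The only cosmetic difference is that you start from $\BB^{an}$ and work towards $L\imath_{\Higgs}^{*}\TT$ while the paper goes the other way, and you make explicit the perfectness and horizontality checks that the paper leaves to the definitions.
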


\begin{proof}
It suffices to check that  $\BB^{an} \cong \iota_{\Higgs}^{*}\TT$. We verify this by moving objects around the commutative diagram 
\begin{equation}
\label{eq BBB structure}
\begin{tikzcd}[column sep = huge]
& \Perf(\Sigma_{\Hod}) \times_B \Perf(\ol{\Sigma}_{\Hod})
\arrow[r, "\Psi_{\Del, \rho}"] 
\arrow[d] 
& 
\Perf(\Deligne_G(\Sigma))
\arrow[d, "L\imath_{\Higgs}^{*}"]
\\
\Coh(\Sigma_{\Dol}) \arrow[d, "\kappa^{\heart}"'] \arrow[r, hookrightarrow] 
& \Perf(\Sigma_{\Dol})
\arrow[r, "\Psi_{\Dol, \rho}"] \arrow[d, "\kappa^{\heart}"']
&
\Perf( \Higgs_G(\Sigma) )
\arrow[d, equal] 
\\
\Hgs(\Sigma, K_\Sigma) \arrow[r, hookrightarrow] & \PHgs(\Sigma, K_\Sigma)
\arrow[r, "\Psi_{\Bon, \rho}"] 
& 
\Perf( \Higgs_G(\Sigma) )
\\
\Hgs(X, K_X) \arrow[r, hookrightarrow] \arrow[u, "(\cdot)^{an}"]& \PHgs(X, K_X)
\arrow[u, "(\cdot)^{an}"]
\arrow[r, "\Phi_{\Bon, \rho}"]
&
\Perf(\Higgs_G(X))
\arrow[u, "(\cdot)^{an}"']
\end{tikzcd}.
\end{equation}
Recall the upper square commutes by Proposition \ref{pr dolbeault restriction}, the middle square by Proposition \ref{pr Dol Mod commute} and the lower square by Proposition \ref{pr analytification}. The upper square computes the pullback
\[
L\imath^{*}_{\Higgs} \TT 
= L\imath^{*}_{\Higgs} \Psi_{\Del,\rho}(\Ff, \ol\Ff, f)
\cong \Psi_{\Dol,\rho} ( L\jmath_{\Dol}^{*}\Ff ).
\]
The hypothesis $\kappa^{\heart}(L\jmath_{\Dol}^{*}\Ff) \cong (E,\phi)^{an}$ alongside the commutivity of the middle square  provides  
\[
\Psi_{\Dol,\rho} ( L\jmath_{\Dol}^{*}\Ff_{\eta} ) 
\cong \Psi_{\Bon,\rho} \circ \kappa^{\heart}(L\jmath_{\Dol}^{*}\Ff_{\eta} )
\cong \Psi_{\Bon,\rho} ( (E,\phi)^{an} ).
\]
Combining the above formulae with the lower square yields
\[
L\imath^{*}_{\Higgs} \TT 
\cong \Psi_{\Bon,\rho} ( (E,\phi)^{an} ) 
\cong  \Phi_{\Bon,\rho} ( E,\phi ) ^{an} = \BB^{an}.
\]
The pair $(\BB,\TT) \in \Perf(\Higgs_G(X)) \times \Perf^{\, =}(\Deligne_G(\Sigma))$ therefore satisfies the conditions of Definition \ref{de stacky BBB} that define a (BBB)-brane. 
\end{proof}


We place the main example of this article into the context of our general existence theorem. 

\begin{corollary}
\label{co Dirac-Higgs is BBB}
In the notation of Theorem \ref{th BBB}, consider the vector 
\[
(\Oo_X, 0, \Oo_{\Sigma_{\Hod}}, \Oo_{\ol\Sigma_{\Hod}}, \id),
\]
where $\id$ is the identity map $\Oo_{\Sigma_B} \to \Oo_{\ol{\Sigma}_B} = \Oo_{\Sigma_B}$. We claim this vector transforms to a (BBB)-brane 
\[
\DD_{\rho} = \Phi_{\Bon,\rho}(\Oo_{X}, 0), \quad \TT_{\rho} = \Psi_{\Del, \rho}(\Oo_{\Sigma_{\Hod}}, \Oo_{\ol\Sigma_{\Hod}}, \id),
\]
with underlying complex given by the Dirac-Higgs complex $\DD_{\rho}$. 
\end{corollary}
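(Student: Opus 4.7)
The plan is to verify that the vector $(\Oo_X, 0, \Oo_{\Sigma_{\Hod}}, \Oo_{\ol\Sigma_{\Hod}}, \id)$ satisfies each of the four bullet-point hypotheses of Theorem \ref{th BBB} and then simply invoke that theorem. Since the corollary is a specialisation of the general existence result, the proof is essentially a compatibility check for structure sheaves, so I would organise it as a short list of verifications rather than a long computation.

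First I would check the algebraic datum: the trivial Higgs bundle $(\Oo_X, 0)$ is a locally free Higgs sheaf on $X$, hence is an object of $\PHgs(X, K_X)$. Second, I would verify that $(\Oo_{\Sigma_{\Hod}}, \Oo_{\ol{\Sigma}_{\Hod}}, \id)$ defines a bona fide object of $\Perf(\Sigma_{\Hod}) \times_B \Perf(\ol{\Sigma}_{\Hod})$. Both entries are structure sheaves, and so perfect; what must be identified is the Betti transition datum, namely the equivalence $(\Oo_{\Sigma_{\Hod}})_B \cong (\Oo_{\ol\Sigma_{\Hod}})_B$ in $\Perf(\Sigma_B \times (\AA^1-0))$. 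By definition in \eqref{eq definition of Ff_B}, both sides are obtained by restricting the structure sheaf to $\Sigma_{\Hod}|_{\AA^1 - 0}$, transporting via $\Triv_{\Sigma, *}$ (an isomorphism, so the structure sheaf is preserved), and then pushing forward under $\nu_{RH} \times \id$; the identification $\Sigma_B = \ol{\Sigma}_B$ of Simpson shapes then yields a canonical identification of the outputs, with respect to which we glue by the identity.

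Third, I would invoke Proposition \ref{pr twistor lines}: since the gluing datum $f = \id$ is by definition equivalent to the identity in $\Perf(\Sigma_B \times (\AA^1 - 0))$, the triple $(\Oo_{\Sigma_{\Hod}}, \Oo_{\ol\Sigma_{\Hod}}, \id)$ lies in the subcategory $(\Perf(\Sigma_{\Hod}) \times_B \Perf(\ol\Sigma_{\Hod}))^{\, =}$ carved out by \eqref{eq hodge =}. Fourth, I would verify the restriction condition $\kappa^{\heart}(L\jmath_{\Dol}^{*}\Oo_{\Sigma_{\Hod}}) \cong (\Oo_X, 0)^{an}$: pullback of the structure sheaf along the inclusion $\jmath_{\Dol}: \Sigma_{\Dol} \hookrightarrow \Sigma_{\Hod}$ yields $\Oo_{\Sigma_{\Dol}}$, and under the equivalence of Proposition \ref{pr QCoh Dol}, $\kappa^{\heart}(\Oo_{\Sigma_{\Dol}}) \cong (\Oo_\Sigma, 0)$, which coincides with $(\Oo_X, 0)^{an}$.

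With these four conditions in hand, Theorem \ref{th BBB} applies directly and produces a (BBB)-brane $(\BB, \TT)$ with $\BB = \Phi_{\Bon,\rho}(\Oo_X, 0)$ and $\TT = \Psi_{\Del,\rho}(\Oo_{\Sigma_{\Hod}}, \Oo_{\ol\Sigma_{\Hod}}, \id)$. The final identification of $\BB$ with the Dirac-Higgs complex is the tautology already noted after Definition \ref{de bonsdorff}: $\DD_\rho = Rp_{2,*}(\rho(\Uu)) = \Phi_{\Bon,\rho}(\Oo_X \xrightarrow{0} K_X)$. There is no serious obstacle in the argument; the only mildly delicate point, to be addressed carefully, is the canonical identification of the two Betti pushforwards so that the gluing by the identity is well-defined, which follows functorially from the fact that $\Sigma_B = \ol{\Sigma}_B$ and that all the intermediate functors $\Triv_{\Sigma, *}$, $\ol{\Triv}_{\Sigma,*}$, $(\nu_{RH} \times \id)_*$ and $(\ol{\nu}_{RH} \times \id)_*$ act on structure sheaves in the same canonical manner.
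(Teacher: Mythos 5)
Your proposal is correct and follows essentially the same route as the paper's proof: verify that the vector lies in $\Hgs(X,K_X)\times_{\Hgs(\Sigma,K_\Sigma)}\left(\Perf(\Sigma_{\Hod})\times_B\Perf(\ol\Sigma_{\Hod})\right)^{\,=}$ by identifying the Betti pushforwards of the structure sheaves, invoke Proposition \ref{pr twistor lines} for the horizontality condition, check $\kappa^{\heart}(L\jmath_{\Dol}^{*}\Oo_{\Sigma_{\Hod}})\cong(\Oo_X,0)^{an}$, and apply Theorem \ref{th BBB}. The only difference is that you spell out a couple of the functorial identifications slightly more explicitly than the paper does, which is harmless.
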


\begin{proof}
It suffices to check that the given vector satisfies the conditions needed for Theorem \ref{th BBB}. In other words, that the vector is an object of the product 
\[
\Hgs(X, K_X) \times_{\Hgs(\Sigma, K_{\Sigma})} \left(\Perf(\Sigma_{\Hod}) \times_B \Perf(\ol\Sigma_{\Hod})\right)^{\, =}
\]
In the notation of Proposition \ref{pr satisfy gluing}, the induced sheaves on the Betti shapes are 
\[
(\Oo_{\Sigma_{\Hod}})_B = \Oo_{\Sigma_B \times (\AA^1 - 0)}, \quad (\Oo_{\ol{\Sigma}_{\Hod}})_B = \Oo_{\ol{\Sigma}_B \times (\AA^1 - 0)}.
\]
which, after applying $\Sigma_B = \ol{\Sigma}_B$, admits the choice of isomorphism 
\[
\id: (\Oo_{\Sigma_{\Hod}})_B \to (\Oo_{\ol{\Sigma}_{\Hod}})_B.
\]
The triple $(\Oo_{\Sigma_{\Hod}}, \Oo_{\ol\Sigma_{\Hod}}, \id)$ is therefore a well-defined object of $\Perf(\Sigma_{\Hod}) \times_B \Perf(\ol\Sigma_{\Hod})$. Moreover, it is an object of $\left(\Perf(\Sigma_{\Hod}) \times_B \Perf(\ol\Sigma_{\Hod})\right)^{\, =}$ by Proposition \ref{pr twistor lines}. 

It remains to confirm the compatibility between $(\Oo_X, 0)$ and $(\Oo_{\Sigma_{\Hod}}, \Oo_{\ol\Sigma_{\Hod}}, \id)$ with respect to the restriction $\imath_{\Higgs} : \Higgs_G(\Sigma) \hookrightarrow \Deligne_G(\Sigma)$. This follows from the observations $L\imath_{\Higgs}^{*} \Oo_{\Sigma_{\Hod}} \cong \Oo_{\Sigma_{\Dol}}$ and 
\[
\kappa^{\heart} (\Oo_{\Sigma_{\Dol}}) = ( \Oo_{\Sigma} , 0) = ( \Oo_X, 0)^{an}. \qedhere
\]
\end{proof}

\section{Relation with Wilson functors}
\label{se relation}

This section is dedicated to the relation between the Dolbeault integral functor and Donagi-Pantev's classical limit of the geometric Langlands correspondence \cite{DP}. Donagi-Pantev conjectured the existence of a derived equivalence $\DCoh(\Higgs_G(X)) \dashrightarrow \DCoh(\Higgs_{^LG}(X))$ that intertwines a pair of Hecke and Wilson functors, derived from Beilinson-Drinfeld's original conjecture \cite{beilinson&drinfeld} as a classical limit $\hbar \to 0$. In this section, we place the Dolbeault functor $\Phi_{\Dol}$ into a commutative square, thought of as next to Donagi-Pantev's conjecture: 
\begin{equation}
\label{eq approx clGLC}
\begin{tikzcd}[column sep = huge]
\DCoh(X_{\Dol}) 
\arrow[r, "\Phi_{\Dol, \rho}"] \arrow[d, "(.)|_y"'] 
& \DCoh(\Higgs_G(X)) 
\arrow[d, "W_{\mu ,x}"] \arrow[r, dashrightarrow]
& \DCoh(\Higgs_{^LG}(X))
\arrow[d, "^LH_{\mu, x}"]
\\
\DCoh(X_{\Dol}) 
\arrow[r, "\Phi_{\Dol, \rho \otimes \mu}"'] 
& \DCoh(\Higgs_G(X)) 
\arrow[r, dashrightarrow]
& \DCoh(\Higgs_{^LG}(X))
\end{tikzcd}.
\end{equation}
Let us recall the definition of the Hecke and Wilson functors. Let $\Ff$ denote the G-bundle that underlies the universal $G$-Higgs bundle $\Uu = (\Ff, \Theta)$ on $X \times \Higgs_G(X)$. With the cotangent bundle map $\pi: \Higgs_G(X) \to \Bun_G(X)$ defined by the forgetful functor $(P,\theta) \mapsto P$ and the universal $G$-bundle $\Vv$ on $X\times \Higgs_G(X)$, there exists a universal isomorphism 
\begin{equation}
\label{eq universal bundles and pullback}
\Ff \cong (\id \times \pi)^{*}\Vv.
\end{equation}
Then, given a representation $\mu : G \to GL_n$ and a point $x \in X$, the associated Wilson functor is the tensorisation functor 
\[
W_{\mu , x} : \DCoh(\Higgs_G(X)) \to \DCoh(\Higgs_G(X)), \quad \Ee \mapsto \Ee \otimes \mu(\Ff) |_{\{x\} \times \Higgs_G(X)}.
\]
On the other hand, the Hecke functors $^LH_{\mu x}$ are integral functors with a complicated integral kernel $^{^L}\Jj_{\mu, x}$, constructed from analysing the cameral spectral data \cite[§2]{DP}. We shall only need to formulate Hecke transforms on the level of the base curve $X$, as studied in \cite{FGOP} or \cite[§4.2]{hausel&hitchin}, which arise from the standard short exact sequence 
\[
0 \to \Oo_X(-x) \to \Oo_X \to \Oo_X \to 0.
\]
Indeed, given a point $x \in X$ and a sheaf $\Ee$ on $X$, the associated (total) Hecke transform of $\Ee$ at $x$ is 
\[
\Ee \longmapsto \Ee(-x) := \ker(\Ee \to \Ee |_x).
\]
One can repeat the same constructions, 
replacing $X$ with $X_{\Dol}$ and taking a point $y \in X_{\Dol}$. It is then reasonable to consider the restriction functors 
\[
(\cdot)|_{y} : \DCoh(X_{\Dol}) \to \DCoh(X_{\Dol}), \quad \Ee \longmapsto \Ee|_y = \coker(\Ee(-y) \to \Ee) ,
\]
as a "co-Hecke transform". This explains how their appearance in \eqref{eq approx clGLC} can be considered as emulating the classical limit. 

We shall make use of two morphisms between $X$ and $X_{\Dol}$. Recall that $X_{\Dol}$ is the relative classifying stack for the formal group scheme $\widehat{TX} \to X$. There exists a flat map $r_{\Dol} : X_{\Dol} \to X$ induced by the cotangent bundle map $TX \to X$ and a section $\iota_{\Dol} : X \to X_{\Dol}$ induced by the zero section $X \to TX$. The precise interaction between the Dolbeault and Wilson functor can then be stated as follows. 

\begin{theorem}
\label{th exchange wilson hecke}
Fix points $x \in X$ and $y = \iota_{\Dol}(x) \in X_{\Dol}$ related by the inclusion $\iota_{\Dol} : X \to X_{\Dol}$. Let $\rho, \mu : G \to GL_n$ be two representations. The Dolbeault integral functors with respect to $\rho$ and $\rho \otimes \mu$ fit into the following commutative square: 
\begin{equation*}
\begin{tikzcd}[column sep = huge]
\DCoh(X_{\Dol}) \arrow[r, "\Phi_{\Dol, \rho}"] \arrow[d, "(\cdot)|_{y}"'] & \DCoh(\Higgs_G(X)) \arrow[d, "W_{\mu, x}"] \\
\DCoh(X_{\Dol}) \arrow[r, "\Phi_{\Dol, \rho \otimes \mu}"'] & \DCoh(\Higgs_G(X))
\end{tikzcd}.
\end{equation*}
\end{theorem}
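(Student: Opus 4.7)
The strategy is to run a standard projection-formula / base-change calculation, transporting both compositions into a common pushforward from $X_{\Dol} \times \Higgs_G(X)$ and identifying their integrands using the universal identifications at the point $y = \iota_{\Dol}(x)$. Writing $f_1, f_2$ for the two projections from $X_{\Dol} \times \Higgs_G(X)$, and $\jmath_y : \{y\} \times \Higgs_G(X) \hookrightarrow X_{\Dol} \times \Higgs_G(X)$ for the closed embedding determined by $y$, the basic input is the fact that $y$ is the image of $x$ under the zero section $\iota_{\Dol}$, which combined with Corollary \ref{co universal higgs and bun} and the universal isomorphism $\Ff \cong (\id \times \pi)^* \Vv$ from the preamble gives
\[
L\jmath_y^{*}\Vv_{\Dol} \, \cong \, \Ff|_{\{x\} \times \Higgs_G(X)},
\]
and therefore $L\jmath_y^{*} \mu(\Vv_{\Dol}) \cong \mu(\Ff)|_{\{x\} \times \Higgs_G(X)}$.

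I would start from the Wilson side and use the projection formula for the proper morphism $f_2$ to pull the twisting sheaf inside the pushforward:
\[
W_{\mu,x}\bigl(\Phi_{\Dol,\rho}(\Ee)\bigr)
\, \cong \,
Rf_{2,*}\bigl(\rho(\Vv_{\Dol}) \otimes^{L} Lf_1^{*}\Ee \otimes^{L} Lf_2^{*}\mu(\Ff)|_x\bigr).
\]
On the other side, unravelling the cokernel definition of $\Ee|_y$ gives $Lf_1^{*}(\Ee|_y) \cong Lf_1^{*}\Ee \otimes^{L} Lf_1^{*}\Oo_y$, and by base change along the Cartesian square formed by $f_1, f_2$ and $\jmath_y$ we have $Lf_1^{*}\Oo_y \cong \jmath_{y,*}\Oo_{\{y\} \times \Higgs_G(X)}$. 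Applying the projection formula for the closed embedding $\jmath_y$, together with the identification above, transforms
\[
\mu(\Vv_{\Dol}) \otimes^{L} Lf_1^{*}\Oo_y
\, \cong \,
\jmath_{y,*}\bigl(L\jmath_y^{*}\mu(\Vv_{\Dol})\bigr)
\, \cong \,
\jmath_{y,*}\bigl(\mu(\Ff)|_x\bigr).
\]
Using $f_2 \circ \jmath_y = \id_{\Higgs_G(X)}$, this plugs into the Wilson-side integrand and converts $Lf_2^{*}\mu(\Ff)|_x$ (restricted along the pushforward) into the $\mu(\Vv_{\Dol})$-factor that completes the representation $\rho \otimes \mu$ in the integrand of $\Phi_{\Dol, \rho \otimes \mu}$.

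Combining these manipulations yields the required chain of isomorphisms
\[
W_{\mu,x}\bigl(\Phi_{\Dol,\rho}(\Ee)\bigr)
\, \cong \,
Rf_{2,*}\bigl((\rho \otimes \mu)(\Vv_{\Dol}) \otimes^{L} Lf_1^{*}\Ee \otimes^{L} Lf_1^{*}\Oo_y\bigr)
\, \cong \,
\Phi_{\Dol, \rho \otimes \mu}(\Ee|_y),
\]
and naturality of the isomorphism in $\Ee$ follows from the functoriality of projection formula and base change.

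The main obstacle will be Step 3, where one must carefully reconcile the two descriptions of the twisting factor: the Wilson side tensors with $Lf_2^{*}\mu(\Ff)|_x$, a sheaf spread over all of $X_{\Dol} \times \Higgs_G(X)$, whereas the base change above produces a factor supported on $\{y\} \times \Higgs_G(X)$. These are distinct before pushforward, but become identified after $Rf_{2,*}$ via the Beck--Chevalley compatibility encoded in the identity $f_2 \circ \jmath_y = \id$. Getting this comparison right requires keeping track of the stacky/derived structure of $X_{\Dol}$ at $y$, since under $\kappa^\heart$ the skyscraper $\Oo_y$ corresponds to the trivial Higgs sheaf $(\Oo_x, 0)$ and the pushforward $Rf_{2,*}$ must be interpreted in the sense of Proposition \ref{pr dol cohomology} (i.e.\ Dolbeault-cohomology integration) rather than as naive $\Oo$-module pushforward.
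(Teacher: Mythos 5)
You have set the calculation up correctly, and you have put your finger on exactly the right pressure point; but the resolution you propose for it does not work, and this is a genuine gap rather than a technicality. The Wilson twist enters the integrand as $Lf_2^{*}\mu(\Ff)|_{x}$, spread over all of $X_{\Dol}\times\Higgs_G(X)$, whereas the factor produced from $\Ee|_y$ is $\jmath_{y,*}$ of something supported on $\{y\}\times\Higgs_G(X)$. These do \emph{not} become identified after $Rf_{2,*}$, and no Beck--Chevalley compatibility attached to $f_2\circ\jmath_y=\id$ achieves this. Concretely, for $A=\rho(\Vv_{\Dol})\otimes^{L}Lf_1^{*}\Ee$ and $B=\mu(\Ff)|_{x}$ the projection formula gives
\[
Rf_{2,*}\bigl(A\otimes^{L}Lf_2^{*}B\bigr)\cong Rf_{2,*}(A)\otimes^{L}B,
\qquad
Rf_{2,*}\bigl(A\otimes^{L}\jmath_{y,*}\jmath_y^{*}Lf_2^{*}B\bigr)\cong L\jmath_y^{*}A\otimes^{L}B:
\]
the first integrates $A$ over the whole of $X_{\Dol}$, the second merely evaluates $A$ on the slice $\{y\}\times\Higgs_G(X)$. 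Already for $G=GL_1$, $\rho=\mu$ the standard representation and $\Ee=\Oo_{X_{\Dol}}$, the left-hand side has fibre $H^{\bullet}_{\Dol}(L,\phi)\otimes L_x$ at a point $(L,\phi)$ (generically of rank $2g-2$, concentrated in degree one), while $\Phi_{\Dol,\rho\otimes\mu}(\Oo_y)$ has fibre $H^{\bullet}\bigl(L_x^{\otimes2}\xrightarrow{2\phi(x)}L_x^{\otimes2}\otimes K_X|_x\bigr)$ by Proposition \ref{pr bonsdorff fibers}, which vanishes whenever $\phi(x)\neq0$. So the final chain of isomorphisms in your argument cannot hold as written.

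You should be aware that the paper's own proof negotiates this same point by a different route --- rewriting $W_{\mu,x}$ as the integral functor with kernel $\delta_{*}\mu(\Vv_{\Dol,y})$ and convolving kernels to arrive at $\rho(\Vv_{\Dol})\otimes p_2^{*}\mu(\Vv_{\Dol,y})$ --- but then makes the analogous substitution, replacing $p_2^{*}\jmath_y^{*}\mu(\Vv_{\Dol})$ by $\jmath_{y,*}\jmath_y^{*}\mu(\Vv_{\Dol})=\mu(\Vv_{\Dol})\otimes\Oo|_{\{y\}\times\Higgs_G(X)}$; the fibrewise computation above suggests this step genuinely fails rather than merely lacking justification, so you should not expect to close your gap by imitating it. The one ingredient that is solid in both arguments is the identification $\Vv_{\Dol,y}\cong\Ff|_{\{x\}\times\Higgs_G(X)}$, which the paper proves as Lemma \ref{le relate V-Dol and F} via the evaluation maps and which your appeal to Corollary \ref{co universal higgs and bun} reaches somewhat more directly. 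The honest endpoint of the projection-formula computation is $W_{\mu,x}\circ\Phi_{\Dol,\rho}(\Ee)\cong Rf_{2,*}\bigl(\rho(\Vv_{\Dol})\otimes p_2^{*}\mu(\Vv_{\Dol,y})\otimes Lf_1^{*}\Ee\bigr)$, an integral functor whose kernel is $\rho(\Vv_{\Dol})$ twisted by a bundle pulled back from the $\Higgs_G(X)$ factor; turning this into $\Phi_{\Dol,\rho\otimes\mu}$ of some modification of $\Ee$ would require a genuinely new idea (or a modification of the statement), not further formal manipulation.
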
 
We first require a relation between the universal families $\Vv_{\Dol}$ on $X_{\Dol} \times \Higgs_G(X)$ and $\Vv$ on $X \times \Bun_G(X)$. Let us introduce the notation $(\cdot)_x := (\cdot)|_{\{x\} \times \Higgs_G(X)}$ for the restriction to the "slices" at a point $x \in X$ and similarly for $y \in X_{\Dol}$. 

\begin{lemma} 
\label{le relate V-Dol and F}
Fix points $x \in X$ and $y= \iota_{\Dol}(x) \in X_{\Dol}$. There exists an isomorphism 
\[
\Vv_{\Dol, y} \cong \pi^{*} \Vv_x. 
\]
\end{lemma}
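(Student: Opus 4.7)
The plan is to combine the equivalence from Corollary \ref{co universal higgs and bun} with the universal identification $\Ff \cong (\id \times \pi)^{*}\Vv$ from \eqref{eq universal bundles and pullback}. Both these inputs are already available, so the proof reduces to chasing restrictions around commutative diagrams.

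First, I would observe that the section $\iota_{\Dol}: X \to X_{\Dol}$ induced by the zero section of $TX$ coincides with the natural atlas $\kappa: X \to X_{\Dol}$ from Proposition \ref{pr QCoh Dol}; both realise the canonical $X$-point of the relative classifying stack $B\widehat{TX}$ corresponding to the trivial $\widehat{TX}$-torsor. Under this identification, Corollary \ref{co universal higgs and bun} supplies a canonical isomorphism
\[
(\iota_{\Dol} \times \id_{\Higgs_G(X)})^{\heart}\Vv_{\Dol} \cong \Uu,
\]
and passing to the underlying $G$-bundle on each side gives $(\iota_{\Dol} \times \id_{\Higgs_G(X)})^{*}\Vv_{\Dol} \cong \Ff$.

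Second, I would restrict this isomorphism to the slice over $x \in X$. The map $\iota_{\Dol} \times \id$ restricts to an isomorphism $\{x\} \times \Higgs_G(X) \xrightarrow{\cong} \{y\} \times \Higgs_G(X)$, so by functoriality of pullback the preceding identification descends to $\Vv_{\Dol, y} \cong \Ff_x$ on $\Higgs_G(X)$.

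Third, I would feed this into \eqref{eq universal bundles and pullback} and apply base change in the (trivially Cartesian) square
\[
\begin{tikzcd}
\{x\} \times \Higgs_G(X) \arrow[r, hook] \arrow[d, "\pi"'] & X \times \Higgs_G(X) \arrow[d, "\id \times \pi"] \\
\{x\} \times \Bun_G(X) \arrow[r, hook] & X \times \Bun_G(X)
\end{tikzcd}
\]
to compute
\[
\Ff_x \;=\; \bigl((\id \times \pi)^{*}\Vv\bigr)\big|_{\{x\} \times \Higgs_G(X)} \;\cong\; \pi^{*}\bigl(\Vv|_{\{x\} \times \Bun_G(X)}\bigr) \;=\; \pi^{*}\Vv_x.
\]
Concatenating the two isomorphisms yields $\Vv_{\Dol, y} \cong \pi^{*}\Vv_x$, as required. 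The only subtlety is justifying the identification $\iota_{\Dol} = \kappa$ (and hence the applicability of Corollary \ref{co universal higgs and bun} at $y$); once this is in place, the rest is a formal consequence of base change and functoriality of restriction.
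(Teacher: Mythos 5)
Your proof is correct, but it takes a genuinely different route from the paper's. You factor the argument through two previously stated universal identifications: Corollary \ref{co universal higgs and bun}, which (after noting $\iota_{\Dol}=\kappa$ and passing to underlying bundles) gives $(\iota_{\Dol}\times\id)^{*}\Vv_{\Dol}\cong\Ff$ and hence $\Vv_{\Dol,y}\cong\Ff_x$, and the isomorphism $\Ff\cong(\id\times\pi)^{*}\Vv$ of \eqref{eq universal bundles and pullback}, which gives $\Ff_x\cong\pi^{*}\Vv_x$ by functoriality of restriction. The paper instead works directly with the mapping-stack description: it writes $\Vv=\ev^{*}\gamma$ and $\Vv_{\Dol}=\ev_{\Dol}^{*}\gamma$ for the universal bundle $\gamma$ on $BG$, expresses $\pi$ via the zero section $\sigma_0=r_{\Dol}^{*}$, and then checks on points that the composition $\ev_{\Dol}\circ(\iota_{\Dol}\times(\sigma_0\circ\pi))$ agrees with $\ev_{\Dol}\circ(\iota_{\Dol}\times\id)$ because $r_{\Dol}\circ\iota_{\Dol}=\id$. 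Your version is shorter and more modular, but it leans on \eqref{eq universal bundles and pullback}, which the paper asserts without proof; the paper's evaluation-map computation is in effect a self-contained verification of exactly that compatibility, so the two arguments are roughly equal in total content. The one point you rightly flag as needing justification --- that the section $\iota_{\Dol}$ induced by the zero section of $TX$ coincides with the atlas $\kappa$ of Proposition \ref{pr QCoh Dol}, so that Corollary \ref{co universal higgs and bun} applies at $y=\iota_{\Dol}(x)$ --- is correct as you state it (both are the canonical $X$-point of the relative classifying stack $B_X\widehat{TX}$), and with that in place your argument goes through.
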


\begin{proof}
The mappings stacks $\Bun_G(X) = \Maps(X,BG)$ and $\Higgs_G(X) = \Maps(X_{\Dol}, BG)$ have evaulation maps 
\[
\ev : X \times \Maps(X,BG) \to BG, \quad \ev_{\Dol} : X_{\Dol} \times \Maps(X_{\Dol}, BG) \to BG.
\]
Let $\gamma$ denote the universal $G$-bundle on BG given by the quotient map $\Spec(\CC) \to BG$. The universal families $\Vv$ and $\Vv_{\Dol}$ associated to the mapping stacks permit the universal isomorphisms 
\[
\Vv \cong \ev^{*}\gamma, \quad \Vv_{\Dol} := \ev_{\Dol}^{*}\gamma.
\]
Pullback along the morphism $r_{\Dol} : X_{\Dol} \to X$ defines the zero section $\sigma_0 = r_{\Dol}^{*} : \Maps(X,BG) \to \Maps(X_{\Dol}, BG)$. Indeed, as a map $\Bun_G(X) \to \Higgs_G(X)$, this is precisely the map $\sigma_0(P) = (P,0)$ that records zero Higgs field. One has a commutative diagram
\begin{equation*}
\begin{tikzcd}[column sep = huge]
BG & X_{\Dol} \times \Maps( X_{\Dol} , BG ) \arrow[l, "\ev_{\Dol}"'] \\
X \times \Maps(X,BG) \arrow[u, "\ev"] \arrow[r, "\id \times \sigma_{0}"'] & X \times \Maps(X_{\Dol}, BG) \arrow[l, bend left, "\id \times \pi"] \arrow[u, "\iota_{\Dol} \times \id"']
\end{tikzcd}.
\end{equation*}
We therefore have the functorial isomorphism
\[
\Vv = \ev^{*}\gamma 
\cong (\id \times \sigma_{0})^{*}(\iota_{\Dol} \times \id)^{*} \ev_{\Dol}^{*} \gamma .
\]
Restricting to $\{x\} \times \Maps(X,BG)$ and taking the pullback along $\pi$ provides
\begin{equation}
\label{eq universal slices}
\pi^{*} \Vv_{x} \cong \Big( \ev_{\Dol} \circ ( \iota_{\Dol}(x) \times (\sigma_{0} \circ \pi )) \Big)^{*} \gamma.
\end{equation}
To analyse the effect of this pullback we evaluate the composition of maps on a point $(x,f) \in X \times \Maps(X_{\Dol},BG)$, 
\begin{align*}
\ev_{\Dol} \circ (\iota_{\Dol} \times (\sigma_{0} \circ \pi)) (x , f) 
& = \ev \Big( \iota_{\Dol}(x) ~ , ~ \sigma_{0} \circ \pi(f) \Big) \\
& = \ev \Big( \iota_{\Dol}(x) ~ , ~ f \circ \iota_{\Dol} \circ r_{\Dol} \Big) \\
& = f \circ \iota_{\Dol} \circ r_{\Dol} \circ \iota_{\Dol} (x) \\
& = f \circ \iota_{\Dol} (x) \\
& = \ev_{\Dol} \circ (\iota_{\Dol} \times \id) ( x, f).
\end{align*}
It follows that the $(\sigma_{0} \circ \pi)^{*}$ term cancels out, so we have an isomorphism
\[
\Big( \ev_{\Dol} \circ ( \iota_{\Dol}(x) \times (\sigma_{0} \circ \pi )) \Big)^{*} \gamma
\cong \Big( \ev_{\Dol} \circ ( \iota_{\Dol}(x) \times \id )\Big)^{*} \gamma.
\]
Substituting this into \eqref{eq universal slices} and applying the universal isomorphism $\Vv_{\Dol} \cong \ev_{\Dol}^{*}\gamma$ yields
\[
\pi^{*}\Vv_x 
\cong (\iota_{\Dol}(x) \times \id)^{*} \ev_{\Dol}^{*} \gamma
= \Vv_{\Dol, y},
\]
which proves the lemma. 
\end{proof}

\begin{proof}(\textit{of Theorem \ref{th exchange wilson hecke}}).
We establish the commutative square with a fixed object $\Gg \in \DCoh(\Higgs_G(X))$. Using Lemma \ref{le relate V-Dol and F} alongside $\Ff \cong (1 \times \pi)^{*}\Vv$ (see \eqref{eq universal bundles and pullback}) we can rewrite the Wilson functor as
\begin{equation}
\label{eq wilson rewritten}
W_{\mu, x}(\Gg) := \Gg \otimes \mu( \Ff_x ) 
\cong \Gg \otimes \mu( \pi^{*}\Vv_x ) 
\cong \Gg \otimes \mu( \Vv_{\Dol, y} ).
\end{equation}
Consider the diagonal morphism $\delta : \Higgs_G(X) \to \Higgs_G(X) \times \Higgs_G(X)$ and the pushforward $\delta_{*}\mu(\Vv_{\Dol, y}) \in \DCoh(\Higgs_G(X) \times \Higgs_G(X))$. The tensorisation functor \eqref{eq wilson rewritten} and the integral functor $\Phi_{\delta_{*}\mu(\Vv_{\Dol, y})}$ with integral kernal $\delta_{*}\mu(\Vv_{\Dol, y})$ satisfy
\[
W_{\mu, x} \simeq \Phi_{\delta_{*}\mu(\Vv_{\Dol, y})}
\]
This allows us to compute $W_{\mu, x} \circ \Phi_{\Dol,\rho}$ as a composition of integral functors, so therefore as a convolution of the integral kernels. Consider the natural projections 
\begin{equation*}
\begin{tikzcd}[column sep = huge]
X_{\Dol} \times \Higgs_G(X) \times \Higgs_G(X) \arrow[d, "\pi_{12}"'] \arrow[dr, "\pi_{13}"'] \arrow[r, "\pi_{23}"] & \Higgs_G(X) \times \Higgs_G(X) \\
X_{\Dol} \times \Higgs_G(X) & X_{\Dol} \times \Higgs_G(X) 
\end{tikzcd}.
\end{equation*}
The convolution of integral kernels is defined to be 
\[
\rho(\Vv_{\Dol}) \star \delta_{*}\mu(\Vv_{\Dol, y}) := \pi_{13,*}\Big(\pi_{12}^{*}\rho(\Vv_{\Dol}) \otimes \pi_{23}^{*} \circ \delta_{*}\mu(\Vv_{\Dol, y}) \Big). 
\]
It follows that transforming an object $\Ee \in \DCoh(X_{\Dol})$ results in equivalences 
\begin{align*}
W_{\mu, x} \circ \Phi_{\Dol, \rho} (\Ee) 
& \cong \Phi_{\delta_{*}\mu(\Vv_{\Dol, y})} \circ \Phi_{\Dol, \rho} (\Ee) \\
& \cong \Phi_{\rho(\Vv_{\Dol}) \star \delta_{*}\mu(\Vv_{\Dol, y})}(\Ee) \\
& = Rp_{2,*} \Big( \pi_{13,*} \Big( \pi_{12}^{*}\rho(\Vv_{\Dol}) \otimes \pi_{23}^{*} \circ \delta_{*}\mu(\Vv_{\Dol, y}) \Big)  \otimes p_1^{*}\Ee \Big).
\end{align*}
With respect to the Cartesian diagram 
\begin{equation*}
\begin{tikzcd} 
X_{\Dol} \times \Higgs_G(X)
\arrow[r, "\id \times \delta"] 
\arrow[d, "p_2"'] 
\arrow[dr, phantom, "\square"] 
& X_{\Dol} \times \Higgs_G(X) \times \Higgs_G(X) 
\arrow[d, "\pi_{23}"] 
\\
\Higgs_G(X) \arrow[r, "\delta"'] 
& \Higgs_G(X) \times \Higgs_G(X)
\end{tikzcd},
\end{equation*} 
base change provides $\pi_{23}^{*} \circ \delta_{*}  \simeq (1 \times \delta)_{*} \circ p_2^{*}$. Our calculations continue with 
\[
W_{\Dol, \mu, x} \circ \Phi_{\Dol, \rho} (\Ee) 
\cong Rp_{2,*} 
\Big( 
\pi_{13,*} \Big( \pi_{12}^{*}\rho(\Vv_{\Dol}) \otimes (1 \times \delta)_{*} \circ p_2^{*}\mu(\Vv_{\Dol ,y}) \Big)  \otimes p_1^{*}\Ee
\Big). 
\]
The final observation is that we are applying pushforward along $\pi_{13}$ on a sheaf that is supported on the diagonal in $X_{\Dol} \times \Higgs_G(X) \times \Higgs_G(X)$. The action of $\pi_{13, *}$ is therefore equivalent to $\pi_{12, *}$ on this sheaf. Alongside the projection formula and $\pi_{12} \circ (\id \times \delta) = \id$, this allows us to calculate 
\begin{align*}
W_{\Dol, \mu, x} \circ \Phi_{\Dol, \rho} (\Ee) 
& \cong Rp_{2,*} 
\Big( 
\rho(\Vv_{\Dol}) \otimes \pi_{12,*} \circ (1 \times \delta)_{*} \circ p_2^{*}\mu(\Vv_{\Dol, y})  \otimes p_1^{*}\Ee 
\Big) 
\\
& \cong Rp_{2,*} 
\Big( 
\rho(\Vv_{\Dol}) \otimes p_2^{*}\mu(\Vv_{\Dol, y})  \otimes p_1^{*}\Ee
\Big) 
\\
& \cong 
Rp_{2,*} 
\Big( 
\rho(\Vv_{\Dol}) \otimes \mu(\Vv_{\Dol}) \otimes \Oo |_{\{y\} \times \Higgs_G(X)} \otimes p_1^{*}\Ee 
\Big) 
\\
& \cong 
Rp_{2,*} 
\Big( 
\rho(\Vv_{\Dol}) \otimes \mu(\Vv_{\Dol})  \otimes p_1^{*} (\Oo|_{y} \otimes \Ee )
\Big) 
\\
& = 
Rp_{2,*} 
\Big( 
\rho \otimes \mu (\Vv_{\Dol})  \otimes p_1^{*} ( \Ee|_{y} )
\Big),
\end{align*}
which concludes the proof. 
\end{proof}


\begin{thebibliography}{ZZZZZ}

\bibitem[Alp]{alper_good}
J. Alper, 
\textit{Good moduli spaces for Artin stacks,}
Ann. Inst. Fourier 63 (2013), pg. 2349-2402

\bibitem[AHS]{ahs}
M. F. Atiyah, N. J. Hitchin and I. M. Singer,
\textit{Self-duality in four-dimensional Riemannian geometry,} 
Proc. Roy. Soc. Lond. 362 (1978), pg. 425-461 

\bibitem[ABC+]{ABC+}
D. Arinkin, D. Beraldo, L. Chen, D. Gaitsgory, J. Faergeman, K. Lin, S. Raskin and N. Rozenblyum,
\textit{Proof of the geometric Langlands conjecture}, 
people.mpim-bonn.mpg.de/gaitsgde/GLC/

\bibitem[AG]{arinkin&gaitsgory}
D. Arinkin and D. Gaitsgory,
\textit{Singular support of coherent sheaves and the geometric Langlands conjecture,} 
Selecta Math. 21.1, (2015) pg. 1-199 

\bibitem[BS1]{baraglia&schaposnik1}
D. Baraglia, and L. P. Schaposnik, 
\textit{Higgs bundles and (A,B,A)-branes,} 
Comm. Math. Phys. 331 (2014), pg. 1271–1300

\bibitem[BS2]{baraglia&schaposnik2}
D. Baraglia, and L. P. Schaposnik,
\textit{Real structures on moduli spaces of Higgs bundles,} 
Adv. Theo. Math. Phys. 20 (2016), pg. 525–551

\bibitem[BD]{beilinson&drinfeld}
A. Beilinson and V. Drinfeld,
\textit{Quantization of Hitchin’s integrable system and Hecke eigensheaves,}
https://math.uchicago.edu/~drinfeld/langlands/QuantizationHitchin.pdf (1991)

\bibitem[BZFN]{BZFN}
D. Ben-Zvi, J. Francis and D. Nadler,
\textit{Integral transforms and Drinfeld centers in derived algebraic geometry}, 
J. Amer. Math. Soc. 23.4 (2010), pg. 909-966

\bibitem[BZN]{ben-zvi&nadler}
D. Ben-Zvi and D. Nadler,  
\textit{Betti geometric Langlands}, 
Proc. Sympos. Pure Math. 97.2 (2018), pg. 3-41

\bibitem[BZNP]{BZNP}
D. Ben-Zvi, D. Nadler, and A. Preygel,
\textit{Integral transforms for coherent sheaves,} 
J. Eur. Math. Soc. 19.12 (2017), pg. 3763-3812

\bibitem[BJSV]{vafa}
M. Bershadsky, A. Johanssen, V. Sadov and C. Vafa, 
\textit{Topological Reduction Of 4-D SYM to 2-D Sigma Models,}
Nucl. Phys. B 448 (1995), pg. 166-186

\bibitem[Bla]{blaavand}
J. Blaavand, 
\textit{The Dirac-Higgs bundle,}
PhD Thesis, U. of Oxford (2015)

\bibitem[BG]{biswas&garcia}
I. Biswas and O. Garcia-Prada, 
\textit{Anti-holomorphic involutions of the moduli spaces of Higgs bundles.}
J. Éc. Polytech. Math. 2 (2015), pg. 35–54.

\bibitem[BCFG]{BCFG}
I. Biswas, L. A. Calvo, E. Franco, and O. Garcia-Prada, 
\textit{Involutions of the moduli spaces of G-Higgs
bundles over elliptic curves,} 
J. Geom. Phys. 142 (2019), pg. 47–65

\bibitem[Bon1]{bonsdorff_1}
J. Bonsdorff,
\textit{A Fourier transform for Higgs bundles.}
PhD Thesis, U. of Oxford (2001)

\bibitem[Bon2]{bonsdorff_2}
J. Bonsdorff,
\textit{Autodual connection in the Fourier transform of a Higgs bundle.}
Asian J. Math. 14 (2010), pg. 153-174 

\bibitem[Cor]{corlette}
K. Corlette, 
\textit{Flat G-bundles with canonical metrics.}
J. Differential Geom. 28 (1988) pg. 361-382

\bibitem[Del]{deligne}
P. Deligne,
\textit{\'Equations diff\'erentielles \'a points singuliers r\'eguliers.} 
Lecture Notes in Math. 163 (1970) 

\bibitem[DP1]{DP}
R. Donagi and T. Pantev, 
\textit{Langlands duality for Hitchin systems.}
Invent. math. 189.3 (2012) pg. 653-735 

\bibitem[Don]{donaldson}
S. K. Donaldson, 
\textit{Anti self-dual Yang-Mills connections over complex algebraic surfaces and stable vector bundles}
Proc. Lond. Math. Soc. 50.1 (1985), pg. 1-26

\bibitem[FGOP]{FGOP}
E. Franco, P. Gothen, A. Oliveira and A. Peón-Nieto, {\it Unramified covers and branes on the Hitchin system}, 
Adv. Math. 377 (2021), pg. 107493

\bibitem[FJ]{franco&jardim}
E. Franco and M. Jardim, 
\textit{Mirror symmetry for Nahm branes,} 
Épijournal Géom. Algébrique 6 (2022), no. 4 

\bibitem[FP]{franco&peon}
E. Franco and A. Peón‐Nieto, 
\textit{Branes on the singular locus of the Hitchin system via Borel and other parabolic subgroups.} Math. Nachr. 296.5 (2023), pg. 1803-1841

\bibitem[FreJ]{frejlich&jardim}
P. Frejlich and M. Jardim, 
\textit{Nahm transform for Higgs bundles,} 
J. Geom. Phys. 58 (2008), pg. 1221–1230 

\bibitem[GNO]{GNO} 
P. Goddard, J. Nuyts, and D. I. Olive, 
\textit{Gauge Theories and Magnetic Charge,}
Nucl. Phys. B125 (1977), pg. 1-28 

\bibitem[Gai]{gaiotto}
D. Gaiotto,
\textit{S-duality of boundary conditions and the Geometric Langlands program.} 
Proc. Symp. Pure Math. 98 (2018), pg. 139

\bibitem[GaiR1]{gaitsgory&rozenblyum2} 
D. Gaitsgory and N. Rozenblyum, 
\textit{Crystals and D-modules}, 
arXiv: 1111.2087 (2011)

\bibitem[GaiR2]{gaitsgory&rozenblyum}
D. Gaitsgory and N. Rozenblyum,
\textit{A study in derived algebraic geometry
Volume II: Deformations, Lie theory and formal
geometry,} Math. Surveys Monogr. 221 (2017)

\bibitem[GinR]{ginzburg&rozenblyum}
V. Ginzburg and N. Rozenblyum,
\textit{Gaiotto's Lagrangian subvarieties via derived symplectic geometry,} 
Algebr. Represent. Theory 21 (2018), pg. 1003-1015 

\bibitem[Gro1]{grothendieck_atiyah_letter}
A. Grothendieck, 
\textit{On the de Rham cohomology of algebraic varieties (a letter to Atiyah),}
Publ. Math. IHES 29 (1966), pg. 95-103

\bibitem[Gro3]{grothendieck_stacks}
A. Grothendieck, 
\textit{Pursuing stacks (À la poursuite des Champs),}
arXiv: 2111.01000 (1983)

\bibitem[HMS]{strominger}
J. A. Harvey, G. W. Moore and A. Strominger,
\textit{Reducing S Duality To T Duality,}
Phys. Rev. D52 (1995), pg. 7161-7167 

\bibitem[Hau1]{hausel}
T. Hausel, 
\textit{The geometry of the moduli space of Higgs bundles,}
PhD Thesis, U. of Oxford (1998) 

\bibitem[HH]{hausel&hitchin}
T. Hausel, N. J. Hitchin, 
\textit{Very stable Higgs bundles, equivariant multiplicity and mirror symmetry,}
Invent. Math. 228.2 (2022), pg. 893-989 

\bibitem[HMP]{HMP}
T. Hausel, A. Mellit and D. Pei, 
\textit{Mirror symmetry with branes by equivariant Verlinde formulae}, 
Geom. Phys. 1 (2018), pg. 189-218

\bibitem[HT]{hausel&thaddeus}
T. Hausel and M. Thaddeus,
\textit{Mirror symmetry, Langlands duality, and the Hitchin system,} 
Invent. Math. 153 (2003), pg. 197-229 

\bibitem[HS]{heller&schaposnik}
S. Heller, and L. P. Schaposnik, 
\textit{Branes through finite group actions,} 
J. Geom. Phys. 129 (2018), pg. 279–293

\bibitem[Hit1]{hitchin_self}
N. J. Hitchin,
\textit{The self-duality equations on a Riemann surface,}
Proc. London Math. Soc. 55 (1987), pg. 59-126 

\bibitem[Hit2]{hitchin_integrable}
N. J. Hitchin,
\textit{Stable bundles and integrable systems,}
Duke Math. J. 54.1 (1987), pg. 91-114 

\bibitem[Hit3]{hitchin_dirac}
N. J. Hitchin,
\textit{The Dirac operator},
Invitations to geometry and topology, Oxford Univ. Press (2002) pg. 208-232

\bibitem[Hit4]{hitchin_char}
N. J. Hitchin,
\textit{Higgs bundles and characteristic classes,} Arbeitstagung Bonn (2013), pg. 247–264, Progr.
Math. Birkhäuser 319 (2016)

\bibitem[HKLR]{hklr}
N. J. Hitchin, A. Karlhede, U. Lindstr\"om and M. Ro\v{c}ek, 
\textit{Hyperkähler metrics and supersymmetry}, Commun. Math. Phys. 108 (1987), pg. 535-59 

\bibitem[HP]{holstein&porta}
J. Holstein and M. Porta, 
\textit{Analytification of mapping stacks,}
arXiv: 1812.09300 (2018)

\bibitem[KV]{kaledin&verbitsky}
D. Kaledin, M. Verbitsky,
\textit{Non-Hermitian Yang-Mills connections,}
Selecta Math. 4 (1998), pg. 279-320

\bibitem[KO]{kapustin&orlov}
A. Kapustin, D. Orlov. 
\textit{Lectures on mirror symmetry, derived categories, and D-branes.} 
Russian Math. Surveys 59.5 (2004), pg. 907 

\bibitem[KW]{kapustin&witten}
A. Kapustin and E. Witten,
\textit{Electric-magnetic duality and the geometric Langlands program.}
Commun. Number Theory Phys. 1 (2007), pg. 1-236

\bibitem[KPS]{KPS}
L. Katzarkov, P. Pandit and T. Spaide, 
\textit{Calabi-Yau structures, spherical functors, and shifted symplectic structures,}
Adv. in Math. 392 (2021) pg. 108037

\bibitem[Kon]{kontsevich}
M. Kontsevich,
\textit{Homological algebra of mirror symmetry,}
Proc. of ICM (1994) 

\bibitem[Lur2]{lurie}
J. Lurie, 
\textit{Derived algebraic geometry, IX: Closed immersions,} www.math.ias.edu/~lurie/papers/DAG-IX.pdf (2011)

\bibitem[MS]{maulik&shen}
D. Maulik and J. Shen, 
\textit{Endoscopic decompositions and the Hausel–Thaddeus conjecture,} 
Forum Math. Pi. 9 (2021) pg. e8

\bibitem[MO]{MO}
C. Montonen and D. I. Olive, 
\textit{Magnetic Monopoles As Gauge Particles?,}
Phys. Lett. B72 (1977) pg. 117-120

\bibitem[PTVV]{PTVV}
T. Pantev, B. Toën, M. Vaqui\'e, G. Vezzosi, 
\textit{Shifted symplectic structures.}
Pub. Math. IHÉS 117 (2013), pg. 271-328 

\bibitem[Pen]{penrose}
R. Penrose,
\textit{Nonlinear gravitons and curved twistor theory,}
Gen. Relativ. Grav. 7 (1976), pg. 31-52

\bibitem[Por1]{porta_GAGA}
M. Porta, 
\textit{Derived complex analytic geometry I: GAGA theorems,} 
arXiv: 1506.09042 (2015)

\bibitem[Por2]{porta_square}
M. Porta. 
\textit{Derived complex analytic geometry II: square-zero extensions,} 
arXiv: 1507.06602 (2015)

\bibitem[Por3]{porta}
M. Porta,
\textit{The derived Riemann-Hilbert correspondence,} 
arXiv: 1703.03907 (2017)

\bibitem[PS]{porta&sala}
M. Porta and F. Sala,
\textit{Simpson’s shapes of schemes and stacks,}
\text{people.dm.unipi.it/sala/assets/pdf/porta\_sala\_shapes.pdf}

\bibitem[PY1]{porta&yu}
M. Porta and T. Yu, 
\textit{Higher analytic stacks and GAGA theorems.}
Adv. Math. 302 (2016) pg. 351-409

\bibitem[PY2]{porta&yu_rep}
M. Porta and T. Yu, 
\textit{Representability theorem in derived analytic geometry,}
J. Eur. Math. Soc. 22.12 (2020), pg. 3867-3951

\bibitem[Sal]{salamon}
S. Salamon,
\textit{Differential geometry of quaternionic manifolds,}
Ann. Sci. Ec. Norm. Sup´er. 19 (1986), pg. 31-55

\bibitem[Se]{serre} 
J. P. Serre, 
{\it Geometrie algebrique et geometrie analytique}, 
Ann. Inst. Fourier 6 (1956), pg. 1-42.

\bibitem[Sim1]{simpson_higgs}
C.T. Simpson,
\textit{Higgs bundles and local systems,}
Pub. Math. IHÉS 75 (1992), pg. 5-95

\bibitem[Sim2]{simpson1}
C. T. Simpson,
\textit{Moduli of representations of the fundamental group of a smooth projective variety I,}
Pub. Math. IHÉS 79 (1994), pg. 47-129

\bibitem[Sim3]{simpson2}
C. T. Simpson,
\textit{Moduli of representations of the fundamental group of a smooth projective variety II.}
Pub. Math. IHÉS 80 (1995),  pg. 5-79

\bibitem[Sim4]{simpson_hodge_2}
C. T. Simpson,
\textit{The Hodge filtration on nonabelian cohomology,}
arXiv:alg-geom/9604005 (1996)

\bibitem[Sim5]{simpson_dolbeault}
C. T. Simpson,
\textit{Algebraic aspects of higher nonabelian Hodge theory,} 
arXiv:math/9902067 (1999)

\bibitem[Sim6]{simpson_dolbeault_2}
C. T. Simpson, 
\textit{Secondary Kodaira-Spencer classes and nonabelian Dolbeault cohomology,} 
arXiv:9712020 (1997)

\bibitem[Sim7]{simpson_hodge}
C. T. Simpson,
\textit{Geometricity of the Hodge filtration on the $\infty$-stack of perfect complexes over $X_{\dR}$,}
arXiv:0510269 (2005)

\bibitem[Stacks]{the_stacks_project}
Various Authors, 
\textit{The Stacks Project,}
https://stacks.math.columbia.edu/

\bibitem[To1]{toen}
B. To\"en,
{ \it Derived Algebraic Geometry and Deformation Quantization,}
arXiv:1403.6995 (2014)

\end{thebibliography}
\end{document}